\theoremstyle{plain}
\newtheorem{theorem}{\bf Theorem}[section]
\newtheorem{proposition}[theorem]{\bf Proposition}
\newtheorem{lemma}[theorem]{\bf Lemma}
\newtheorem{corollary}[theorem]{\bf Corollary}
\theoremstyle{definition}
\newtheorem{definition}[theorem]{\bf Definition}
\newtheorem{remark}[theorem]{\bf Remark}
\newcommand{\N}{\mathbb N}
\newcommand{\Z}{\mathbb Z}
\renewcommand{\t}{\, | \,}
\newcommand{\und}{\;\mbox{ and }\;}
\newcommand{\la}{\langle}
\newcommand{\ra}{\rangle}
 \DeclareMathOperator{\ord}{ord}
\DeclareMathOperator{\lcm}{lcm} 
 \DeclareMathOperator{\supp}{supp}
\newcommand{\bdot}{\boldsymbol{\cdot}}
\newcommand{\bulletprod}[1]{\underset{#1}{\bullet}}
\newcommand{\red}{{\text{\rm red}}}
\numberwithin{equation}{section}
\begin{document}

\title[Product-one sequences]{On the Algebraic and Arithmetic structure \\ of the monoid of Product-one sequences II}

\author{Jun Seok Oh}
\address{Institute for Mathematics and Scientific Computing\\ University of Graz, NAWI Graz\\ Heinrichstra{\ss}e 36\\ 8010 Graz, Austria }
\email{junseok.oh@uni-graz.at}

\thanks{This work was supported by the Austrian Science Fund FWF, W1230 Doctoral Program Discrete Mathematics.}

\keywords{product-one sequences, C-monoids, seminormal monoids, class semigroups, Davenport constant, sets of lengths}
\subjclass[2010]{13A50, 13F45, 20D60, 20M13, 20M17}

\begin{abstract}
Let $G$ be a finite group and $G'$ its commutator subgroup. By a sequence over $G$, we mean a finite unordered sequence of terms from $G$, where repetition is allowed, and we say that it is a product-one sequence if its terms can be ordered such that their product equals the identity element of $G$. The monoid $\mathcal B (G)$ of all product-one sequences over $G$ is a finitely generated C-monoid whence it has a finite commutative class semigroup. It is well-known that the class semigroup is a group if and only if $G$ is abelian (equivalently, $\mathcal B (G)$ is Krull). In the present paper we show that the class semigroup is Clifford (i.e., a union of groups) if and only if $|G'| \le 2$ if and only if $\mathcal B (G)$ is seminormal, and we study sets of lengths in $\mathcal B (G)$.
\end{abstract}

\maketitle


\section{Introduction} \label{1}

Let $G$ be a finite group and $G'$ its commutator subgroup. A sequence $S$ over $G$ means a finite sequence of terms from $G$ which is unordered and repetition of terms is allowed. We say that $S$ is a product-one sequence if its terms can be ordered such that their product equals the identity element of the group. Clearly juxtaposition of sequences is a commutative operation on the set of sequences. As usual we consider sequences as elements of the free abelian monoid $\mathcal F (G)$ with basis $G$, and clearly the subset $\mathcal B (G) \subset \mathcal F (G)$ of all product-one sequences is a submonoid. The focus on the present paper is on non-abelian finite groups. Sequences over general (not necessarily abelian) finite groups have been studied in combinatorics since the work of Olson (\cite{Ol-Wh77} for an upper bound on the small Davenport constant) and there has been renewed interest (\cite{Ba07b, Ga-Lu08a, Ga-Li10b,Ha15a, Ge-Gr13a, Gr13b, Br-Ri18a}), partly motivated by connections to invariant theory (\cite{ Cz-Do-Ge16a, Cz-Do13c, Cz-Do14a, Cz-Do15a, Cz-Do-Sz17}).

In the present paper we continue the work started in \cite{Oh18a}. The monoid $\mathcal B (G)$ is a finitely generated C-monoid whence, by definition of a C-monoid, its class semigroup is finite. A large class of Mori domains (so-called C-domains, see \cite{Ge-HK06a} for basics, and \cite{Re13a, Ge-Ra-Re15c, Ge-Ha08b, Ka16b}) are known to have finite class semigroup, and this allow one to derive arithmetical finiteness results. However,  the algebraic structure of their class semigroups is unknown and monoids of product-one sequences, being  combinatorial C-monoids,  represent the first class of C-monoids for which we have some first insight into their structure. Among others,   it was shown in \cite{Oh18a} that the class semigroup of $\mathcal B (G)$ is a group if and only if $\mathcal B (G)$ is Krull (resp., root closed) if and only if $G$ is abelian (Theorem \ref{3.1}). In the present paper we provide a characterization of idempotent elements in the class semigroup (Proposition \ref{3.3}) which allows us to show that the class semigroup of $\mathcal B (G)$ is Clifford (i.e., a union of groups) if and only if $\mathcal B (G)$ is seminormal if and only if $|G'| \le 2$ (Theorme \ref{3.6}, \ref{3.11}, and Corollary \ref{3.12}).

Let $H$ be a transfer Krull monoid over a finite abelian group $G$ (this includes commutative Krull monoids with class group $G$ having prime divisors in all classes). Then the arithmetic of $H$ and of $\mathcal B (G)$ are closely connected and their systems of all sets of lengths coincide.  This is the reason why the study of sets of lengths in $\mathcal B (G)$ (for finite abelian groups $G$) is a central topic in factorization theory. In Section \ref{4} we take first steps towards studying sets of lengths in $\mathcal B (G)$ for non-abelian groups. Among others, we show that over the dihedral group and over non-abelian groups with small Davenport constant, sets of lengths are different from sets of lengths over any abelian group (Theorems \ref{4.4} and \ref{4.7}).  At the beginning of Section \ref{4} we provide a detailed discussion of the involved topics.

\bigskip
\section{Background on  product-one sequences and their class semigroups} \label{2}
\bigskip

Our notation and terminology are consistent with the first part \cite{Oh18a} and also with \cite{Ge-Gr13a, Cz-Do-Ge16a}. We briefly gather the key notions. To begin with,  by $\N$ we mean the set of positive integers, and for integers $a, b \in \Z$, $[a,b] = \{x \in \Z \mid a \le x \le b\}$ is the discrete interval.

\smallskip
\noindent
{\bf Groups.} Let $G$ be a multiplicatively written, finite group. For an element $g \in G$, $\ord (g) \in \N$ is its order, and for a subset $G_0 \subset G$, $\langle G_0 \rangle \subset G$ denotes the subgroup generated by $G_0$.  Furthermore,
\begin{itemize}
\item $\mathsf Z (G) = \{g \in G \mid gx = xg \ \text{for all} \ x \in G\} \triangleleft G$ is the {\it center} of $G$,

\item $[x,y] = xyx^{-1}y^{-1} \in G$ is the {\it commutator} of the elements $x,y \in G$, and

\item $G' = [G,G] = \langle [x,y] \mid x,y \in G \rangle \triangleleft G$ is the    {\it commutator subgroup} of $G$.
\end{itemize}
For every $n \in \N$, we denote
\begin{itemize}
\item  by $C_n$ a {\it cyclic group} of order $n$,

\item by $D_{2n} = \{1_G, a, \ldots, a^{n-1}, b, ab, \ldots, a^{n-1}b\} $ a {\it dihedral group} of order $2n$,

\item by $Dic_{4n} = \langle a, b \t a^{2n} = 1_G, b^{2} = a^{n} \und ba = a^{-1}b \rangle$ a {\it dicyclic group} of order $4n$,

\item by $A_n$ an {\it alternating group} of degree $n$, and

\item by $Q_8 = \{ E, I, J, K, -E, -I, -J, -K \}$  the {\it quaternion group}.
\end{itemize}

\smallskip
\noindent
{\bf Semigroups.} All our semigroups are commutative and have an identity element.
Let $S$ be a semigroup. We denote by $S^{\times}$ its group of invertible elements and by $\mathsf E (S)$ the set of all idempotents of $S$, endowed with the {\it Rees order} $\leq$, defined by $e \leq f$ if $ef =e$. Clearly, $ef \leq e$ and $ef \le  f$ for all $e,f \in \mathsf E (S)$. If $E \subset \mathsf E (S)$ is a finite subsemigroup,  then $E$ has a smallest element. If $S$ is finite, then for each $a \in S$, there exists an $n \in \N$ such that $a^{n} \in \mathsf E (S)$. For subsets $A, B \subset S$ and $a \in S$, we set
\[
  AB = \{ ab \t a \in A, b \in B \} \und aB = \{ ab \t b \in B \} \,.
\]

A subset $I \subset S$ is called an {\it ideal} if $SI \subset I$.  If $I \subset S$ is an ideal, we define the {\it Rees quotient} to be the semigroup $S/I = (S \setminus I) \cup \{0\}$, where $0$ is a zero element, the product $ab$ is defined as in $S$ if $a,b$ and $ab$ all belong to $S \setminus I$, and $ab = 0$ otherwise.

By a {\it monoid}, we mean a  semigroup  which satisfies the cancellation laws. Let $H$ be a monoid. Then  $\mathsf q (H)$ denotes the quotient group of $H$ and $\mathcal A (H)$ the set of irreducibles (atoms) of $H$. The monoid $H$ is called {\it atomic} if every non-unit of $H$ can be written as a finite product of atoms. We say that $H$ is reduced if $H^{\times} = \{1\}$, and we denote by $H_{\red} =H/H^{\times} = \{aH^{\times} \mid a \in H \}$ the associated reduced monoid of $H$. A monoid $F$ is called {\it free abelian with basis $P \subset F$} if every $a \in F$ has a unique representation of the form
\[
  a = \prod_{p \in P} p^{\mathsf v_p (a)} \quad \text{with} \quad \mathsf v_p (a)=0 \quad  \text{for almost all} \quad p \in P \,.
\]
If $F$ is free abelian with basis $P$, then $P$ is the set of primes of $F$, we set $F = \mathcal F (P)$, and denote by
\begin{itemize}
\item $|a| = \sum_{p \in P} \mathsf v_p (a)$ the {\it length} of $a$, and by

\item $\supp (a) = \{p \in P \mid \mathsf v_p (a)>0\}$ the {\it support} of $a$.
\end{itemize}
A monoid $F$ is factorial if and only if $F_{\red}$ is free abelian if and only if $F$ is atomic and every atom is a prime. We denote by
\begin{itemize}
\item $H' = \{ x \in \mathsf q (H) \mid \text{there is an $N \in \N$ such that } \ x^n \in H \ \text{for all} \ n \ge N\}$ the {\it seminormalization} of $H$,  by

\item $\widetilde H = \{ x \in \mathsf q (H) \mid x^N \in H \ \text{for some} \ N \in \N\}$ the {\it root closure} of $H$, and by

\item $\widehat H = \{ x \in \mathsf q (H) \mid \text{there is a $c \in H$ such that} \ cx^n \in H \ \text{for all} \ n \in \N \}$ the {\it complete integral closure} of $H$,
\end{itemize}
and observe that $H \subset H' \subset \widetilde H \subset \widehat H \subset \mathsf q (H)$. Then the monoid $H$ is called
\begin{itemize}
\item {\it seminormal} if $H = H'$ (equivalently,  if $x \in \mathsf q (H)$ and $x^2, x^3 \in H$, then $x \in H$),

\item {\it root closed} if $H = \widetilde H$,

\item {\it completely integrally closed} if $H =\widehat H$.
\end{itemize}
A monoid homomorphism $\varphi \colon H \to D$ is said to be
\begin{itemize}
\item a {\it divisor homomorphism} if $a, b \in H$ and $\varphi (a) \mid \varphi (b)$ implies that $a \mid b$.

\item a {\it divisor theory} if $D$ is free abelian, $\varphi$ is a divisor homomorphism, and for all $\alpha \in D$ there are $a_1, \ldots, a_m \in H$ such that $\alpha = \gcd \big( \varphi (a_1), \ldots, \varphi (a_m) \big)$.
\end{itemize}

\noindent
 A monoid $H$ is said to be a {\it Krull monoid} if it satisfies one of the following equivalent conditions (see \cite[Theorem 2.4.8]{Ge-HK06a}){\rm \,:}
\begin{enumerate}
\item[(a)] $H$ is completely integrally closed and satisfies the ACC on divisorial ideals.

\smallskip
\item[(b)] $H$ has a divisor theory.
\end{enumerate}

\smallskip
\noindent
{\bf Class semigroups and C-monoids.} (a detailed presentation can be found in \cite[Chapter 2]{Ge-HK06a}).
Let $F$ be a monoid and $H \subset F$ a submonoid. For any two elements $y, y' \in F$, we define $H$-equivalence $\sim_H$ by
\begin{equation} \label{equi}
y \sim_H y' \quad \textnormal{if} \quad y^{-1}H \cap F = {y'}^{-1} H \cap F \,.
\end{equation}
Then $H$-equivalence is a congruence relation on $F$. For $y \in F$, let $[y]_H^F$ denote the congruence class of $y$, and let
\[
\mathcal C (H,F) = \big\{ [y]_H^F \mid y \in F \big\} \quad \text{and} \quad \mathcal C^* (H,F) = \big\{ [y]_H^F \mid y \in (F \setminus F^{\times}) \cup \{1\} \big\} \,.
\]
Then $\mathcal C (H,F)$ is a commutative semigroup with unit element $[1]_H^F$ (called the {\it class semigroup} of $H$ in $F$) and $\mathcal C^* (H,F) \subset \mathcal C (H,F)$ is a subsemigroup (called the {\it reduced class semigroup} of $H$ in $F$).
As usual, class groups and class semigroups will both be written additively.

A monoid $H$ is called a {\it {\rm C}-monoid} if $H$ is a submonoid of a factorial monoid $F$ such that $H \cap F^{\times} = H^{\times}$ and $\mathcal C^* (H, F)$ is finite.
A Krull monoid is a C-monoid if and only if it has finite class group. We refer to \cite{Ge-HK06a, Ge-Ra-Re15c, Re13a, Ka16b} for more on C-monoids.

\smallskip
\noindent
{\bf Sequences over groups.}
Let $G$ be a finite group with identity element $1_G$ and $G_0 \subset G$ a subset. The elements of the free abelian monoid $\mathcal F (G_0)$ will be called  {\it sequences} over $G_0$.  This terminology goes back to Combinatorial Number Theory. Indeed, a sequence over $G_0$ can be viewed as a finite unordered sequence of terms from $G_0$, where the repetition of elements is allowed. In order to avoid confusion between multiplication in $G$ and multiplication in $\mathcal F (G_0)$, we denote multiplication in $\mathcal F (G_0)$ by the boldsymbol $\bdot$ and we use brackets for all exponentiation in $\mathcal F (G_0)$. In particular, a sequence $S \in \mathcal F (G_0)$ has the form
\begin{equation} \label{basic}
S = g_1 \bdot \ldots \bdot g_{\ell} = \bulletprod{i\in [1,\ell]} g_i = \bulletprod{g \in G_0}g^{[\mathsf v_g (S)]} \in \mathcal F (G_0),
\end{equation}
where $g_1, \ldots, g_{\ell} \in G_0$ are the terms of $S$. Moreover, if $S_1, S_2 \in \mathcal F (G_0)$ and $g_1, g_2 \in G_0$, then $S_1 \bdot S_2 \in \mathcal F (G_0)$ has length $|S_1|+|S_2|$, \ $S_1 \bdot g_1 \in \mathcal F (G_0)$ has length $|S_1|+1$, \ $g_1g_2 \in G$ is an element of $G$, but $g_1 \bdot g_2 \in \mathcal F (G_0)$ is a sequence of length $2$. If $g \in G_0$, $T \in \mathcal F (G_0)$, and $k \in \N_0$, then
\[
g^{[k]}=\underset{k}{\underbrace{g\bdot\ldots\bdot g}}\in \mathcal F (G_0) \quad \text{and} \quad T^{[k]}=\underset{k}{\underbrace{T\bdot\ldots\bdot T}}\in \mathcal F (G_0) \,.
\]
Let $S \in \mathcal F (G_0)$ be a sequence as in \eqref{basic}. Then we denote by
\[
\pi (S) = \{ g_{\tau (1)} \ldots  g_{\tau (\ell)} \in G \mid \tau\mbox{ a permutation of $[1,\ell]$} \} \subset G \quad \und \quad \Pi (S) = \underset{|T| \ge 1}{\bigcup_{T \t S}} {\pi}(T)  \subset G \,,
\]
the {\it set of products} and {\it subsequence products} of $S$, and it can easily be seen that $\pi (S)$ is contained in a $G'$-coset. Note that $|S|=0$ if and only if $S=1_{\mathcal F (G)}$, and in that case we use the convention that $\pi (S) = \{1_G\}$. The sequence $S$ is called
\begin{itemize}
\item a {\it product-one sequence} if $1_G \in \pi (S)$, and

\item {\it product-one free} if $1_G \notin \Pi (S)$.
\end{itemize}

\smallskip
\noindent
If $S = g_1 \bdot \ldots \bdot g_{\ell} \in \mathcal B (G)$ is a product-one sequence with $1_G = g_1 \ldots g_{\ell}$, then $1_G = g_i \ldots g_{\ell}g_1 \ldots g_{i-1}$ for every $i \in [1, \ell]$.
Every map of groups $\theta : G \rightarrow H$ extends to a monoid homomorphism $\theta : \mathcal F (G) \rightarrow \mathcal F (H)$, where $\theta (S) = \theta (g_1)\bdot \ldots \bdot \theta (g_{\ell})$.
If $\theta$ is a group homomorphism, then $\theta (S)$ is a product-one sequence if and only if $\pi(S) \cap \ker(\theta) \neq \emptyset$.

\smallskip
\begin{definition} \label{2.1}~
Let $G_0 \subset G$ be a subset.
\begin{enumerate}
\item The submonoid
      \[
      \mathcal B (G_0) = \{S \in \mathcal F (G_0) \mid 1_G \in \pi (S) \} \subset \mathcal F (G_0)
      \]
      is called the {\it monoid of product-one sequences}, and $\mathcal A (G_0) := \mathcal A \big( \mathcal B (G_0) \big)$ is its set of atoms.

\smallskip
\item We call
      \[
      \mathsf D (G_0) = \sup \{ |S| \mid S \in \mathcal A (G_0) \} \in \N \cup \{\infty\}
      \]
      the {\it large Davenport constant} of $G_0$ and
      \[
      \mathsf d (G_0) = \sup \{ |S| \mid S \in \mathcal F  (G_0) \ \text{is product-one free} \} \in \N_0 \cup \{\infty\}
      \]
      the {\it small Davenport constant} of $G_0$.
\end{enumerate}
\end{definition}

\smallskip
The following elementary lemma will be used without further mention (see \cite[Lemma 3.1]{Cz-Do-Ge16a}).

\smallskip
\begin{lemma} \label{2.2}~
Let  $G_0 \subset G$ be a subset.
\begin{enumerate}
\item $\mathcal B (G_0)$ is a reduced finitely generated C-monoid in $\mathcal F (G)$, $\mathcal A (G_0)$ is finite, and $\mathsf D (G_0) \le |G|$.

\smallskip
\item Let $S \in \mathcal F (G)$ be product-one free.
      \begin{enumerate}
      \smallskip
      \item If $g_0 \in \pi (S)$, then $g_0^{-1} \bdot S \in \mathcal A (G)$. In particular, $\mathsf d (G)+1 \le \mathsf D (G)$.

      \smallskip
      \item If $|S|=\mathsf d (G)$, then $\Pi (S) = G \setminus \{1_G\}$ and hence
            \[
              \mathsf d (G) = \max \big\{ |S| \mid S \in \mathcal F (G) \, \mbox{ with } \, \Pi (S) = G \setminus \{1_G\} \big\} \,.
            \]
      \end{enumerate}

\smallskip
\item If $G$ is cyclic, then $\mathsf d (G)+1 = \mathsf D (G) = |G|$.
\end{enumerate}
\end{lemma}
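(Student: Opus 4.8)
The plan is to treat the three parts in turn, doing the genuinely combinatorial work for (2) and (3) and isolating the C-monoid property of (1) as the one substantial input.

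\textbf{Part (1).} Since $\mathcal F(G)$ is free abelian it is reduced, hence so is its submonoid $\mathcal B(G_0)$, and in particular $\mathcal B(G_0)\cap\mathcal F(G)^{\times}=\{1\}=\mathcal B(G_0)^{\times}$. To bound atom lengths I would fix an atom $S=g_1\bdot\ldots\bdot g_{\ell}$ with $g_1\cdots g_{\ell}=1_G$ and examine the partial products $p_i=g_1\cdots g_i$ for $i\in[0,\ell]$, noting $p_0=p_{\ell}=1_G$. If $\ell>|G|$, two of $p_0,\ldots,p_{\ell-1}$ coincide, say $p_i=p_j$ with $0\le i<j\le\ell-1$; then the consecutive block $T=g_{i+1}\bdot\ldots\bdot g_j$ has product $1_G$, and its complement $R$, ordered as $g_1\cdots g_i\,g_{j+1}\cdots g_{\ell}$, also has product $1_G$. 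Thus $S=T\bdot R$ is a nontrivial factorization in $\mathcal B(G_0)$, contradicting irreducibility; hence $\mathsf D(G_0)\le|G|$. As lengths strictly drop under nontrivial factorization, $\mathcal B(G_0)$ is atomic, and since all atoms have length at most $|G|$, the set $\mathcal A(G_0)$ is finite and $\mathcal B(G_0)$ is finitely generated. What remains is that $\mathcal B(G_0)$ is a C-monoid, i.e.\ the finiteness of $\mathcal C^{*}(\mathcal B(G_0),\mathcal F(G))$, and I expect this to be the main obstacle. I would invoke the standard C-monoid criterion of \cite{Ge-HK06a}, exhibiting a threshold $\alpha$ for which the $\mathcal B(G)$-class $[S]$ is determined by the capped data $\big(\min(\mathsf v_g(S),\alpha)\big)_{g\in G}$ together with the residues $\mathsf v_g(S)\bmod\ord(g)$, of which there are finitely many. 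The delicate point is that increasing a multiplicity $\mathsf v_g(S)$ beyond $\alpha$ does not change, for any $z\in\mathcal F(G)$, whether $1_G\in\pi(S\bdot z)$; this rearrangement argument is the real content and is carried out in \cite{Oh18a,Cz-Do-Ge16a}.

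\textbf{Part (2)(a).} Since $g_0\in\pi(S)$, ordering $S$ so that $g_1\cdots g_{\ell}=g_0$ gives $g_0^{-1}g_1\cdots g_{\ell}=1_G$, so $g_0^{-1}\bdot S\in\mathcal B(G)$. For irreducibility I would argue by contradiction: suppose $g_0^{-1}\bdot S=U\bdot V$ with $U,V\in\mathcal B(G)$ nontrivial. Marking the appended copy of $g_0^{-1}$, it lies in exactly one factor, so the other factor draws all its terms from $S$ and is therefore a nonempty subsequence of $S$; being product-one, it witnesses $1_G\in\Pi(S)$, contradicting that $S$ is product-one free. Hence $g_0^{-1}\bdot S\in\mathcal A(G)$. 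Applying this to a product-one free $S$ of length $\mathsf d(G)$ and any $g_0\in\pi(S)$ produces an atom of length $\mathsf d(G)+1$, whence $\mathsf d(G)+1\le\mathsf D(G)$.

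\textbf{Part (2)(b).} The key observation is the equivalence $1_G\in\pi(x\bdot T_0)\iff x^{-1}\in\pi(T_0)$, valid because membership of $1_G$ in a set of products is invariant under cyclic rotation, so one may place $x$ first. It follows that for $x\ne 1_G$ the sequence $S\bdot x$ is again product-one free precisely when $x^{-1}\notin\Pi(S)$. Now let $S$ be product-one free with $|S|=\mathsf d(G)$; then $\Pi(S)\subseteq G\setminus\{1_G\}$, and were the inclusion strict I would pick $y\in(G\setminus\{1_G\})\setminus\Pi(S)$ and set $x=y^{-1}\ne 1_G$, making $S\bdot x$ product-one free of length $\mathsf d(G)+1$ and contradicting maximality. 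Hence $\Pi(S)=G\setminus\{1_G\}$, and the displayed formula for $\mathsf d(G)$ is immediate.

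\textbf{Part (3).} For $G=C_n=\langle g\rangle$ I would first pin down $\mathsf d(C_n)=n-1$: the sequence $g^{[n-1]}$ has subsequence products $g,\ldots,g^{n-1}$, none equal to $1_G$, giving $\mathsf d(C_n)\ge n-1$, while any sequence of length $n$ yields $n+1$ partial products in a group of order $n$, forcing a product-one consecutive block and so $\mathsf d(C_n)\le n-1$. Combining with part (2)(a) gives $\mathsf D(C_n)\ge\mathsf d(C_n)+1=n$, and part (1) gives $\mathsf D(C_n)\le|G|=n$; therefore $\mathsf d(C_n)+1=\mathsf D(C_n)=|G|=n$.
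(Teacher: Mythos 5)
Your proposal is correct, but the comparison here is necessarily lopsided: the paper does not prove this lemma at all. It is stated as an elementary known result with a pointer to \cite[Lemma 3.1]{Cz-Do-Ge16a}, and is then used without further mention. You instead supply the standard self-contained arguments, and they all check out: the pigeonhole argument on partial products $p_0,\ldots,p_{\ell-1}$ correctly yields $\mathsf D (G_0) \le |G|$ (note $j \le \ell-1$ guarantees the complementary block $R$ is nonempty, so the factorization is genuinely nontrivial), and with it atomicity, finiteness of $\mathcal A (G_0)$, and finite generation; the rotation trick $1_G \in \pi(x \bdot T_0) \Leftrightarrow x^{-1} \in \pi(T_0)$ is exactly the right mechanism for 2(b); and the cyclic case follows as you say by combining the two bounds. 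One step worth tightening is the ``marking'' argument in 2(a): since sequences are unordered multisets, the clean formulation is that $\mathsf v_{g_0^{-1}}(U) + \mathsf v_{g_0^{-1}}(V) = \mathsf v_{g_0^{-1}}(S) + 1$ forces at least one factor, say $V$, to satisfy $\mathsf v_{g_0^{-1}}(V) \le \mathsf v_{g_0^{-1}}(S)$, whence $V \mid S$ in $\mathcal F (G)$ and $V$ is a nonempty product-one subsequence of $S$, a contradiction; this is what your marking language amounts to, and it is fine. The one point you do not prove --- finiteness of the reduced class semigroup $\mathcal C^{*}\big(\mathcal B (G_0), \mathcal F (G)\big)$, i.e.\ the C-monoid axiom --- is precisely the substantive content of the lemma the paper cites, and your sketch (classes determined by multiplicities capped at a threshold together with residues modulo element orders) is indeed the mechanism of that proof in \cite{Cz-Do-Ge16a, Oh18a}. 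So your route buys a proof that is self-contained except for that single outsourced finiteness statement, whereas the paper buys brevity by outsourcing the entire lemma; neither choice hides a gap.
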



\section{On the structure of  class semigroups} \label{3}
\bigskip

\centerline{\it Throughout this section, let $G$ be a finite  group with identity  $1_G \in G$ and commutator group $G'$.}
\bigskip

Our starting point is the following characterization (\cite[Theorem 3.2]{Cz-Do-Ge16a} and \cite[Proposition 3.4]{Oh18a}).

\begin{theorem}  \label{3.1}~
The following statements are equivalent{\rm \,:}
\begin{enumerate}
\item[(a)] $G$ is abelian.

\smallskip
\item[(b)] The embedding $\mathcal B (G) \hookrightarrow \mathcal F (G)$ is a divisor homomorphism.

\smallskip
\item[(c)] $\mathcal B (G)$ is root closed.

\smallskip
\item[(d)] $\mathcal B (G)$ is a Krull monoid.

\smallskip
\item[(e)] $\mathcal B (G)$ is a transfer Krull monoid.

\smallskip
\item[(f)] $\mathcal C \big(\mathcal B (G), \mathcal F (G)\big)$ is a group.
\end{enumerate}
If this is the case, then $\mathcal C \big(\mathcal B (G), \mathcal F (G)\big) \cong G$.
\end{theorem}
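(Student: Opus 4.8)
The plan is to prove the cycle of implications
\[
(a) \Rightarrow (b) \Rightarrow (d) \Rightarrow (c) \Rightarrow (a), \qquad (d) \Rightarrow (e) \Rightarrow (a), \qquad (a) \Rightarrow (f) \Rightarrow (b),
\]
extracting the isomorphism $\mathcal C\big(\mathcal B(G),\mathcal F(G)\big) \cong G$ from the step $(a)\Rightarrow(f)$. The abelian implications $(a)\Rightarrow(b)$ and $(a)\Rightarrow(f)$ both run through the product homomorphism. When $G$ is abelian the product of the terms of a sequence is independent of their order, so $\sigma \colon \mathcal F(G) \to G$, $\sigma(S) = \prod_{g \in G} g^{\mathsf v_g(S)}$, is a well-defined monoid epimorphism with $\mathcal B(G) = \sigma^{-1}(\{1_G\})$. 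For $(a)\Rightarrow(b)$: if $S,T \in \mathcal B(G)$ and $T = S \bdot U$ in $\mathcal F(G)$, then $\sigma(U) = \sigma(T)\sigma(S)^{-1} = 1_G$, so $U \in \mathcal B(G)$ and $S \mid T$ in $\mathcal B(G)$. For $(a)\Rightarrow(f)$ and the isomorphism, I would compute the congruence directly: for $y \in \mathcal F(G)$ one finds $y^{-1}\mathcal B(G) \cap \mathcal F(G) = \{ z \in \mathcal F(G) : \sigma(z) = \sigma(y)^{-1}\}$, a set that depends only on $\sigma(y)$ and is nonempty for every value of $\sigma(y)$. Hence $y \sim_{\mathcal B(G)} y'$ if and only if $\sigma(y) = \sigma(y')$, so $[y] \mapsto \sigma(y)$ is a semigroup isomorphism $\mathcal C\big(\mathcal B(G),\mathcal F(G)\big) \xrightarrow{\sim} G$; in particular the class semigroup is a group.

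Next I would dispatch the integral-closure implications. Since $\mathcal F(G)$ is free abelian, $(b)$ asserts that $\mathcal B(G) \hookrightarrow \mathcal F(G)$ is a divisor homomorphism into a free abelian monoid, whence $\mathcal B(G)$ is Krull by the characterization of Krull monoids recalled in Section \ref{2} (\cite[Theorem 2.4.8]{Ge-HK06a}); this is $(b)\Rightarrow(d)$. For $(d)\Rightarrow(c)$, a Krull monoid is completely integrally closed, so $\mathcal B(G) = \widehat{\mathcal B(G)}$, and from $\mathcal B(G) \subseteq \widetilde{\mathcal B(G)} \subseteq \widehat{\mathcal B(G)}$ it follows that $\mathcal B(G)$ is root closed. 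The implication $(d)\Rightarrow(e)$ is immediate, the identity being a transfer homomorphism onto the Krull monoid $\mathcal B(G)$ itself.

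The heart of the cycle is $(c)\Rightarrow(a)$, which I would prove contrapositively, exploiting non-commutativity concretely. If $G$ is non-abelian, choose $g,h \in G$ with $c := [g,h] = ghg^{-1}h^{-1} \neq 1_G$, and put $n = \ord(c) \geq 2$. Viewing $c$ as a one-term sequence, $\pi(c^{[n]}) = \{c^{n}\} = \{1_G\}$ gives $c^{[n]} \in \mathcal B(G)$, while $\pi(c) = \{c\} \neq \{1_G\}$ gives $c \notin \mathcal B(G)$. It remains to place $c$ in $\mathsf q(\mathcal B(G))$: the sequences $Q = g \bdot h \bdot g^{-1} \bdot h^{-1}$ and $W = c^{-1} \bdot g \bdot h \bdot g^{-1} \bdot h^{-1}$ both lie in $\mathcal B(G)$ (order $Q$ as $g,g^{-1},h,h^{-1}$, and $W$ as $c^{-1},g,h,g^{-1},h^{-1}$, the latter multiplying to $c^{-1}c = 1_G$), and since $W = c^{-1}\bdot Q$ we get $c = Q \bdot W^{-1}$ in $\mathsf q(\mathcal F(G))$. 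Thus $c \in \widetilde{\mathcal B(G)} \setminus \mathcal B(G)$ and $\mathcal B(G)$ is not root closed, closing $(a)\Rightarrow(b)\Rightarrow(d)\Rightarrow(c)\Rightarrow(a)$.

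Finally I would attach $(f)$ and $(e)$. For $(f)\Rightarrow(b)$, I would use that $\mathcal C\big(\mathcal B(G),\mathcal F(G)\big)$ is a finite commutative semigroup, hence a group precisely when it has a single idempotent; a characterization of the idempotent classes (of the type in the forthcoming Proposition \ref{3.3}) shows that $[1]$ is the unique idempotent exactly when $\mathcal B(G) \hookrightarrow \mathcal F(G)$ is saturated, i.e. a divisor homomorphism, which is $(b)$. The genuinely hard step is $(e)\Rightarrow(a)$, and it is the main obstacle: a transfer Krull monoid need not be Krull, root closed, or even integrally closed, so this implication cannot be reduced to the algebraic arguments above and must instead be arithmetic, using that a weak transfer homomorphism onto a reduced Krull monoid $\mathcal B(G_0)$ over a finite abelian group preserves all sets of lengths. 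I would argue by contraposition, exhibiting for non-abelian $G$ an element of $\mathcal B(G)$ whose set of lengths cannot occur in any transfer Krull monoid over a finite abelian group; this is exactly the point where non-commutativity must enter arithmetically rather than algebraically, and the realization phenomena developed in Section \ref{4} (Theorems \ref{4.4} and \ref{4.7}) indicate the kind of length-set obstruction that supplies it.
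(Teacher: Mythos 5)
The decisive gap is $(e) \Rightarrow (a)$, which you yourself flag as ``the main obstacle'' and then do not prove. Your plan --- for each non-abelian $G$, exhibit an element of $\mathcal B (G)$ whose set of lengths occurs over no finite abelian group, invoking the phenomena of Theorems \ref{4.4} and \ref{4.7} --- cannot be completed with what is in this paper: those theorems treat only groups of odd order, dihedral groups $D_{2n}$ with $n \ge 3$ odd, and groups with $\mathsf D (G) = 6$. For an arbitrary non-abelian $G$ no length-set obstruction is exhibited, and producing one is exactly a Characterization-Problem-type question that is open in this generality; note also that refuting ``transfer Krull'' purely via $\mathcal L (\cdot)$ is not even the right target, since a transfer Krull monoid is a priori more than a monoid whose system of sets of lengths agrees with that of some $\mathcal B (G_0)$ over an abelian group. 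In fact the paper does not prove Theorem \ref{3.1} at all: it quotes it from \cite[Theorem 3.2]{Cz-Do-Ge16a} and \cite[Proposition 3.4]{Oh18a}, and it is in the latter reference that the transfer Krull equivalence is established. So your chain $(d) \Rightarrow (e) \Rightarrow (a)$ is broken at its second arrow, and with it the equivalence of $(e)$ with the remaining statements.

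Two further, but repairable, points. In $(c) \Rightarrow (a)$ you conclude from $W = c^{-1} \bdot Q$ that $c = Q \bdot W^{-1}$ in $\mathsf q \big(\mathcal F (G)\big)$; this is false, since $Q \bdot W^{-1}$ is the formal inverse of the basis element $c^{-1}$ of the free abelian group $\mathsf q \big(\mathcal F (G)\big)$, not the one-term sequence $c$. The fix is immediate: $W \bdot Q^{-1} = c^{-1}$ shows that the one-term sequence $c^{-1}$ lies in $\mathsf q \big(\mathcal B (G)\big) \cap \mathcal F (G)$, and since $\big(c^{-1}\big)^{[\ord (c)]} \in \mathcal B (G)$ while $c^{-1} \notin \mathcal B (G)$, root closedness fails (alternatively, prepend $c$ to $h \bdot g \bdot h^{-1} \bdot g^{-1}$ so that the one-term sequence $c$ itself lies in $\mathsf q \big(\mathcal B (G)\big)$). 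In $(f) \Rightarrow (b)$ you assert, without argument, that $[1]$ being the unique idempotent is equivalent to saturation of $\mathcal B (G) \hookrightarrow \mathcal F (G)$; Proposition \ref{3.3} does not give this equivalence directly. What it does give is $(f) \Rightarrow (a)$: if $G$ is non-abelian, take $G_0 = G$ in Proposition \ref{3.3} to obtain $[S] \in \mathsf E \big( \mathcal C \big(\mathcal B (G), \mathcal F (G)\big) \big)$ with $\pi (S) = G' \ne \{1_G\} = \pi \big(1_{\mathcal F (G)}\big)$, hence $[S] \ne [1]$ and the class semigroup has two idempotents, so it is not a group; this closes your diagram equally well, because you already have $(a) \Rightarrow (b)$. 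The remaining implications --- $(a) \Rightarrow (b)$, $(b) \Rightarrow (d)$ via \cite[Theorem 2.4.8]{Ge-HK06a}, $(d) \Rightarrow (c)$, $(d) \Rightarrow (e)$, and $(a) \Rightarrow (f)$ together with $\mathcal C \big(\mathcal B (G), \mathcal F (G)\big) \cong G$ --- are correct as written.
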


\smallskip
The main goal of this section is to characterize when the class semigroup $\mathcal C \big(\mathcal B (G), \mathcal F (G)\big)$ is the union of groups.
Let $\mathcal C$ be an additively written semigroup. For $a,b \in \mathcal C$, we define {\it Green's relation} $\mathcal H$ by
\[
  a \mathcal H b  \quad \mbox{ if } \,\, a \in b + \mathcal C \und b \in a + \mathcal C \,\,\, (\mbox{equivalently}, \, a+\mathcal C = b + \mathcal C) \,.
\]
Then $\mathcal H$ is a congruence relation on $\mathcal C$, and for $a \in \mathcal C$, we denote by $\mathcal H (a)$ the congruence class of $a$. The following lemma describes a relation between idempotent elements and maximal subgroups (see \cite[Proposition I.4.3 and Corollary I.4.5]{Gr01}).

\begin{lemma} \label{3.2}~
Let $\mathcal C$ be a semigroup and $\mathcal H$ Green's relation on $\mathcal C$.
\begin{enumerate}
\item An $\mathcal H$-class is a subgroup of $\mathcal C$ if and only if it contains an idempotent element of $\mathcal C$. In particular, if $\mathcal H (e)$ is a group, then $e$ is the identity element and the unique idempotent element. 

\smallskip
\item $\big\{ \mathcal H (e) \t e \in \mathsf E (\mathcal C) \big\}$ is the set of all maximal subgroups of $\mathcal C$, and they are pairwise disjoint.
\end{enumerate}
\end{lemma}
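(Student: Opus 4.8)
The plan is to reduce both parts to a single structural fact: for an idempotent $e \in \mathsf E (\mathcal C)$, the $\mathcal H$-class $\mathcal H (e)$ coincides with the group of units $(e + \mathcal C)^{\times}$ of the submonoid $e + \mathcal C$. Since every semigroup here carries an identity, I would first record that $x \in x + \mathcal C$ for all $x$, so that $a \,\mathcal H\, b$ is literally the equality $a + \mathcal C = b + \mathcal C$, and that $e + \mathcal C$ is a submonoid with identity $e$ (using $e + e = e$ and commutativity).

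For part (1), the forward implication is immediate: a subgroup has an identity, which is an idempotent of $\mathcal C$ sitting inside the class. For the converse, suppose the class contains an idempotent $e$, so that the class is $\mathcal H (e)$. I would prove $\mathcal H (e) = (e + \mathcal C)^{\times}$, which makes it a group. One inclusion is easy: if $x$ is a unit of $e + \mathcal C$, then $e \in x + \mathcal C$ and $x \in e + \mathcal C$ force $x + \mathcal C = e + \mathcal C$, i.e. $x \,\mathcal H\, e$. For the reverse inclusion, given $x \,\mathcal H\, e$ I would choose $c \in \mathcal C$ with $e = x + c$, check that $e$ fixes $x$ (the relation $x \in e + \mathcal C$ gives $x + e = x$), and then verify that $y := e + c$ lies in $e + \mathcal C$ and satisfies $x + y = e$, so that $x$ is a unit.

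The final clause of (1) is then pure group theory applied to $\mathcal H (e)$: a group contains a unique idempotent, namely its identity, so $e$ must be that identity and the only idempotent of the class. For part (2), I would first invoke that $\mathcal H$ is a congruence, hence an equivalence relation whose classes partition $\mathcal C$; combined with the uniqueness of the idempotent in a group-class this yields that the various $\mathcal H (e)$ are pairwise disjoint. To identify them as the maximal subgroups, I would take an arbitrary subgroup $\Gamma \subseteq \mathcal C$ with identity $e \in \mathsf E (\mathcal C)$ and show $\Gamma \subseteq \mathcal H (e)$: for $x \in \Gamma$ the inverse relation $x + x^{-1} = e$ gives $e + \mathcal C \subseteq x + \mathcal C$, while $x = e + x$ gives the reverse inclusion, so $x \,\mathcal H\, e$. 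Since each $\mathcal H (e)$ is itself a subgroup by (1), and every subgroup embeds into some $\mathcal H (e)$, these classes are precisely the maximal subgroups.

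The only genuinely delicate point is the explicit construction of inverses inside $\mathcal H (e)$ in the converse of (1); once the candidate $y = e + c$ is produced, everything else is bookkeeping with the defining relation $a + \mathcal C = b + \mathcal C$ and the idempotent law $e + e = e$.
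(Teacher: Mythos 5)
Your proof is correct, but note that the paper never proves Lemma~\ref{3.2} at all: it simply cites \cite[Proposition I.4.3 and Corollary I.4.5]{Gr01}, so your argument is a self-contained replacement for a citation rather than a variant of an in-paper proof. Your route --- identifying $\mathcal H (e)$, for an idempotent $e$, with the unit group $(e + \mathcal C)^{\times}$ of the submonoid $e + \mathcal C$ --- checks out completely: from $x \in e + \mathcal C$ and $e + e = e$ you get $x + e = x$, so writing $e = x + c$ the candidate $y = e + c$ lies in $e + \mathcal C$ and satisfies $x + y = x + e + c = x + c = e$; disjointness in part~(2) follows since $\mathcal H$-classes partition $\mathcal C$ and a group-class contains only one idempotent; and the computation $x + x^{-1} = e$, $x = e + x$ places any subgroup with identity $e$ inside $\mathcal H (e)$, which gives maximality. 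The point worth flagging is that your argument leans essentially on the paper's standing conventions --- commutativity and the presence of an identity in $\mathcal C$, which is what makes $x \in x + \mathcal C$ hold and turns $a \, \mathcal H \, b$ into the literal equality $a + \mathcal C = b + \mathcal C$ --- whereas the cited results in Grillet are proved for general commutative semigroups (where one adjoins an identity or argues through translations/Green's lemma). Since the paper only ever applies the lemma to $\mathcal C \big( \mathcal B (G), \mathcal F (G) \big)$, a finite commutative semigroup with identity $[1_{\mathcal F (G)}]$, your more elementary argument suffices for every use in the paper; what the citation buys instead is generality and brevity, what your proof buys is a transparent, purely equational verification.
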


\smallskip
An element $a \in \mathcal C$ is called {\it regular} if $a$ lies in a subgroup of $\mathcal C$. Clearly $a \in \mathcal C$ is regular if and only if there is an $e \in \mathsf E (\mathcal C)$ such that $a \in \mathcal H (e)$. A semigroup $\mathcal C$ is called a {\it Clifford semigroup} if every element of $\mathcal C$ is regular. Thus $\mathcal C$ is a Clifford semigroup if and only if $\mathcal C$ is the union of $\mathcal H (e)$ for all $e \in \mathsf E (\mathcal C)$.

Applying (\ref{equi}) to the inclusion $\mathcal B (G) \subset \mathcal F (G)$, we say that two sequences $S, S' \in \mathcal F (G)$ are $\mathcal B (G)$-equivalent if one of the following  equivalent conditions hold{\rm \,:}
\begin{itemize}
\item For all $T \in \mathcal F (G)$, we have $S \bdot T \in \mathcal B (G)$ if and only if $S' \bdot T \in \mathcal B (G)$.

\item For all $T \in \mathcal F (G)$, we have $1_G \in \pi ( S \bdot T)$ if and only if $1_G \in \pi ( S' \bdot T)$.
\end{itemize}
We write $\sim$ instead of $\sim_{\mathcal B (G)}$ and set $[S] = [S]_{\mathcal B (G)}^{\mathcal F (G)}$. If $S, S' \in \mathcal F (G)$ are sequences with $S \sim S'$, then $\pi(S) = \pi(S')$ (see \cite[Lemma 3.6]{Oh18a}). We will use this simple fact without further mention.

\smallskip
\begin{proposition} \label{3.3}~
Let $H \subset G'$ be a subset. Then the following statements are equivalent{\rm \,:}
\begin{enumerate}
\item[(a)] $H = \pi(S)$ for some sequence $S \in \mathcal F (G)$ with $[S] \in \mathsf E \Big( \mathcal C \big(\mathcal B (G), \mathcal F (G)\big) \Big)$.

\smallskip
\item[(b)] $H = G'_0$ for a subgroup $G_0 \subset G$.
\end{enumerate}
If $(a)$ holds, then $\la \supp(S) \ra$ is the desired subgroup.
\end{proposition}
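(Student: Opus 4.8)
The plan is to isolate one combinatorial lemma and then to use the idempotency of $[S]$ to promote $S$ to arbitrarily large powers. The lemma I would prove is the following: \emph{if $T \in \mathcal B(G)$ is product-one, $G_0 = \la \supp(T)\ra$, and every element of $\supp(T)$ occurs sufficiently often in $T$, then $\pi(T) = G_0'$}, where $G_0' = [G_0,G_0]$. One inclusion is immediate: the terms of $T$ lie in $G_0$, so any two orderings of $T$ yield products differing by a product of commutators of elements of $G_0$, whence $\pi(T)$ lies in a single $G_0'$-coset; as $1_G \in \pi(T)$ this coset is $G_0'$, giving $\pi(T) \subseteq G_0'$. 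The reverse inclusion is the real content, and the main obstacle, addressed last.

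For $(a) \Rightarrow (b)$, assume $[S]$ is idempotent and set $H = \pi(S)$. From $[S\bdot S] = 2[S] = [S]$ we get $S\bdot S \sim S$, hence $\pi(S\bdot S) = \pi(S)$ by the quoted fact that $\sim$-equivalent sequences have equal product sets. Since concatenating two orderings of $S$ gives an ordering of $S\bdot S$, we have $\pi(S)\pi(S) \subseteq \pi(S\bdot S) = H$, so $H$ is a nonempty finite subset of $G$ closed under multiplication, i.e.\ a subgroup of $G$; in particular $1_G \in H = \pi(S)$, so $S$ is product-one. Writing $G_0 = \la\supp(S)\ra$, the easy inclusion above gives $H \subseteq G_0'$. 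Conversely, idempotency gives $[S^{[m]}] = m[S] = [S]$, hence $\pi(S^{[m]}) = H$, for every $m \ge 1$; for large $m$ the sequence $S^{[m]}$ has support $\supp(S)$ (which generates $G_0$) with every element occurring at least $m$ times, so the Lemma yields $\pi(S^{[m]}) = G_0'$. Therefore $H = G_0'$, which is $(b)$, and $G_0 = \la\supp(S)\ra$ is the asserted subgroup.

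For $(b) \Rightarrow (a)$, given $H = G_0'$ put $S_0 = \bulletprod{g \in G_0} g^{[\ord(g)]}$. Each block $g^{[\ord(g)]}$ is product-one, so $S_0 \in \mathcal B(G)$, and $\supp(S_0) = G_0$, so $\la\supp(S_0)\ra = G_0$. Since $\mathcal C(\mathcal B(G),\mathcal F(G))$ is finite, some multiple $e[S_0] = [S_0^{[e]}]$ is idempotent; fix such an $e$. Then $[S_0^{[em]}] = m\cdot(e[S_0]) = e[S_0]$ for all $m \ge 1$, so all these classes coincide and share one value of $\pi$; applying the Lemma to $S_0^{[em]}$ for large $m$ gives $G_0' = \pi(S_0^{[em]}) = \pi(S_0^{[e]})$. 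Hence $S := S_0^{[e]}$ satisfies $[S] \in \mathsf E\big(\mathcal C(\mathcal B(G),\mathcal F(G))\big)$ and $\pi(S) = G_0' = H$, proving $(a)$.

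The main obstacle is the inclusion $G_0' \subseteq \pi(T)$ of the Lemma. I would prove it through the effect of adjacent transpositions on the product: replacing $\ldots ab \ldots$ by $\ldots ba \ldots$ at a position whose prefix has product $w$ multiplies the total product on the left by $w[b,a]w^{-1}$. Starting from a product-one ordering of $T$ and applying such transpositions, one sees that $\pi(T)$ consists exactly of the elements reachable from $1_G$ by successive left-multiplications of this form. When each element of $\supp(T)$ occurs often enough, I can realise any prescribed prefix $w \in G_0$ (every element of the finite group $G_0$ is a positive word in $\supp(T)$, since $g^{-1} = g^{[\ord(g)-1]}$) while reserving copies of the generators needed for the swap, so that every conjugate $w[x,y]w^{-1}$ with $w \in G_0$ and $x,y \in \supp(T)$, and then any product of finitely many such conjugates, is realised as the product of a suitable ordering of $T$. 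Since $G_0'$ is the normal closure in $G_0$ of the commutators of the generators $\supp(T)$, this gives $G_0' \subseteq \pi(T)$. The delicate point, where the hypothesis ``sufficiently often'' is consumed, is the simultaneous scheduling of all these prefixes and swaps within a single ordering of the fixed multiset $T$; quantifying how large the multiplicities must be is where I expect the real work to lie.
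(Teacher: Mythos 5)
Your skeleton is sound: the observation that $\pi(S)\pi(S)\subseteq\pi(S\bdot S)=\pi(S)$ plus finiteness makes $H$ a subgroup containing $1_G$, the reduction of both implications to a multiplicity lemma applied to powers $S^{[m]}$ (using $[S^{[m]}]=[S]$, hence $\pi(S^{[m]})=\pi(S)$) is valid, and your block construction $S_0=\bulletprod{g\in G_0}g^{[\ord(g)]}$ in (b)$\Rightarrow$(a) is a nice self-contained substitute for the paper's citation of \cite[Lemma 3.7.3]{Oh18a}. The genuine gap is that the inclusion $G_0'\subseteq\pi(T)$ in your Lemma is the entire content of the proposition, and you have not proved it: you set up the transposition calculus and then concede that the ``simultaneous scheduling'' of prefixes and swaps inside the fixed multiset $T$ is unresolved. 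That concession is not a routine quantification issue. In the transposition picture, which conjugators $w$ and which commutators $[b,a]$ are available at a given step depends on the current ordering, so reachability of an arbitrary prescribed product of conjugates does not follow from anything you wrote; and in the direct version, after you reserve letters to spell $w$, $x$, $y$ and the positive words for $x^{-1}$, $y^{-1}$, $w^{-1}$, you must still order the \emph{remaining} letters of $T$ so that their product is $1_G$ --- but ``the leftover subsequence is product-one'' is an assertion of exactly the same nature as the one being proved, so the argument is circular at its crux.

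The paper's proof contains precisely the device you are missing, and it removes the leftover problem rather than solving it. Since $\pi(S)$ is a subgroup, $S$ admits a product-one ordering, and by cyclic rotation one whose first term is any prescribed $g_i\in\supp(S)$; hence $g_i^{-1}$ equals the product of the other $|S|-1$ terms of a full copy of $S$. Now spell $[g,h]=ghg^{-1}h^{-1}$ with $g,h$ written as positive words in the $g_i$ and with every inverted letter expanded in this way: each explicit letter $g_i$ combines with the $|S|-1$ remaining terms of one copy of $S$, so the resulting word is an ordering of exactly $S^{[M]}$ (where $M$ is the total word length of $g$ and $h$), with nothing left over, and idempotency gives $[g,h]\in\pi(S^{[M]})=\pi(S)$; since $\pi(S)$ is a subgroup, $G_0'\subseteq\pi(S)$ follows. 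If you want to keep your Lemma-based layout, note that you only ever invoke the Lemma for $T$ a power of a product-one sequence (namely $S^{[m]}$ and $S_0^{[em]}$), and for such $T$ the same expansion argument --- or a decomposition into full blocks $g^{[\ord(g)]}$, with the unused blocks ordered to product $1_G$ --- closes the gap. In the generality you state it the Lemma is in fact true, but any proof still runs through this device and needs an additional coset argument (from $G_0'p\subseteq G_0'$ conclude $p\in G_0'$) to absorb the part of $T$ not covered by full blocks; so stating the Lemma for arbitrary $T$ buys you nothing and costs extra work.
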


\begin{proof}
(a) $\Rightarrow$ (b) Let $S = g_1 \bdot \ldots \bdot g_{\ell} \in \mathcal F (G)$ be such that $\pi(S) = H$ and $[S]$ is idempotent. Then $\pi(S) = \pi(S^{[n]})$ for all $n \in \N$, and $H = \pi(S) \subset G'$ is a subgroup by \cite[Lemma 3.7.1]{Oh18a}. We set $G_0 = \langle g_1, \ldots, g_{\ell} \rangle$ and assert that $\pi(S) = G'_0$. Clearly we have $\pi(S) \subset G'_0$.

Conversely, we first show that $[g_i, g_j] \in \pi(S)$ for all $i, j \in [1,\ell]$. Let $i,j \in [1,\ell]$ and consider the commutator $[g_i, g_j] = g_{i}g_{j}g^{-1}_{i}g^{-1}_{j}$.
We may assume that $i \neq j$, otherwise $[g_i, g_j] = 1_G \in \pi(S)$.
Then we can take product-one equations, in $\pi(S)$, with $g_i$ and $g_j$ being the first elements, say
\[
  g_{i}g_{n_1}\ldots g_{n_{i-1}}g_{n_{i+1}}\ldots g_{n_{\ell}} = 1_G \quad \und \quad g_{j}g_{m_1}\ldots g_{m_{j-1}}g_{m_{j+1}}\ldots g_{m_{\ell}} = 1_G \,.
\]
It follows that
\[
 g^{-1}_i = g_{n_1}\ldots g_{n_{i-1}}g_{n_{i+1}}\ldots g_{n_{\ell}} \quad \und \quad g^{-1}_j = g_{m_1}\ldots g_{m_{j-1}}g_{m_{j+1}}\ldots g_{m_{\ell}} \,,
\]
and hence $[g_i, g_j] = g_{i}g_{j}(g_{n_1}\ldots g_{n_{i-1}}g_{n_{i+1}}\ldots g_{n_{\ell}})(g_{m_1}\ldots g_{m_{j-1}}g_{m_{j+1}}\ldots g_{m_{\ell}}) \in \pi(S^{[2]}) = \pi(S)$.
Now let $g, h \in G_0$ and consider its commutator $[g, h] = ghg^{-1}h^{-1}$.
Since $G_0 = \langle g_1, \ldots, g_{\ell} \rangle$, $g$ and $h$ have the form
\[
  g = g^{n_{i_1}}_{i_1}\ldots g^{n_{i_t}}_{i_t} \quad \und \quad h = g^{m_{j_1}}_{j_1}\ldots g^{m_{j_k}}_{j_k} \,,
\]
where $t, k \in \N$, $n_{i_1}, \ldots, n_{i_t}, m_{j_1}, \ldots, m_{j_k} \in \N_0$, and $g_{i_1}, \ldots, g_{i_t}, g_{j_1}, \ldots, g_{j_k} \in \supp(S)$.
For each generator of $G_0$, we can express its inverse as a product of all other generators of $G_0$ from the product-one equation in $\pi(S)$.
Let $M = n_{i_1} + \ldots + n_{i_t} + m_{j_1} + \ldots + m_{j_k} \in \N$.
As at the start of the proof, we obtain that
\[
  [g, h] = g^{n_{i_1}}_{i_1}\ldots g^{n_{i_t}}_{i_t}g^{m_{j_1}}_{j_1}\ldots g^{m_{j_k}}_{j_k}\big((g^{-1}_{i_t})^{n_{i_t}}\ldots (g^{-1}_{i_1})^{n_{i_1}}\big) \big((g^{-1}_{j_k})^{m_{j_k}}\ldots (g^{-1}_{j_1})^{m_{j_1}}\big) \in \pi\big(S^{[M]}\big) = \pi(S) \,.
\]
It follows that all generators of $G'_0$ belong to $\pi(S)$, whence $G'_0 \subset \pi(S)$.

\smallskip
(b) $\Rightarrow$ (a) Let  $G_0 \subset G$ be a subgroup with $G'_0 = H$.
By \cite[Lemma 3.7.3]{Oh18a}, there exists a sequence $S \in \mathcal F (G_0)$ such that $[S]^{\mathcal F (G_0)}_{\mathcal B (G_0)} \in \mathsf E \Big( \mathcal C \big(\mathcal B (G_0), \mathcal F (G_0)\big) \Big)$ and $\pi(S) = G'_0$.
It follows that $\pi(S) = \pi\big(S^{[n]}\big)$ for all $n \in \N$, and since $\mathcal C \big(\mathcal B (G), \mathcal F (G)\big)$ is finite by Lemma \ref{2.2}.1, there is an $m \in \N$ such that $\big[S^{[m]}\big] \in \mathsf E \Big( \mathcal C \big(\mathcal B (G), \mathcal F (G)\big) \Big)$. Thus $\big[S^{[m]}\big]$ has the required property.
\end{proof}

\smallskip
\begin{lemma} \label{3.4}~
Let $S \in \mathcal F (G)$ be a sequence with $[S] \in \mathsf E \Big( \mathcal C \big(\mathcal B (G), \mathcal F (G)\big) \Big)$. Then there is a $S_0 \in \mathcal B (G)$ such that $[S_0] = [S]$ and
\[
  \langle \supp(S') \rangle \subset \langle \supp(S_0) \rangle \quad \mbox{ for all } \,\, S' \in \mathcal B (G) \,\, \mbox{ with } \,\, [S'] = [S] \,.
\]
\end{lemma}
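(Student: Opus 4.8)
The plan is to realize the required $S_0$ as the juxtaposition of finitely many representatives, one for each subgroup of $G$ that occurs as $\la \supp(S') \ra$ for some product-one $S'$ in the class $[S]$. First I would record that the class $[S]$ genuinely contains product-one sequences: since $[S]$ is idempotent, Proposition \ref{3.3} shows that $\pi(S)$ is a subgroup of $G'$, so $1_G \in \pi(S)$ and hence $S \in \mathcal B (G)$ itself. Thus the set
\[
  \mathcal U = \big\{ \la \supp(S') \ra \t S' \in \mathcal B (G), \ [S'] = [S] \big\}
\]
is a nonempty collection of subgroups of $G$, and it is \emph{finite} simply because $G$ has only finitely many subgroups. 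The goal is then to produce one representative whose support generates a subgroup containing every member of $\mathcal U$.

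The key observation is that $\mathcal U$ is closed under forming joins of its members inside the class. Indeed, suppose $S_1, S_2 \in \mathcal B (G)$ satisfy $[S_1] = [S_2] = [S]$. Concatenating a product-one ordering of $S_1$ with one of $S_2$ shows $1_G \in \pi(S_1 \bdot S_2)$, so $S_1 \bdot S_2 \in \mathcal B (G)$. Moreover, since $\mathcal B (G)$-equivalence is a congruence and $[S]$ is idempotent,
\[
  [S_1 \bdot S_2] = [S_1] + [S_2] = 2[S] = [S] \,,
\]
so $S_1 \bdot S_2$ lies in the same class. Finally $\supp(S_1 \bdot S_2) = \supp(S_1) \cup \supp(S_2)$, whence $\la \supp(S_1 \bdot S_2) \ra$ is the subgroup generated by $\la \supp(S_1) \ra \cup \la \supp(S_2) \ra$. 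Thus juxtaposition realizes the join of two members of $\mathcal U$ by another member of $\mathcal U$.

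To finish, I would enumerate $\mathcal U = \{U_1, \ldots, U_r\}$, choose for each $i$ a representative $S_{U_i} \in \mathcal B (G)$ with $[S_{U_i}] = [S]$ and $\la \supp(S_{U_i}) \ra = U_i$, and set $S_0 = S_{U_1} \bdot \ldots \bdot S_{U_r}$. Iterating the previous paragraph gives $S_0 \in \mathcal B (G)$, $[S_0] = r[S] = [S]$, and $\la \supp(S_0) \ra = \la U_1 \cup \ldots \cup U_r \ra$, so that $U_i \subset \la \supp(S_0) \ra$ for every $i$. Since every $S' \in \mathcal B (G)$ with $[S'] = [S]$ has $\la \supp(S') \ra = U_i$ for some $i$, this yields $\la \supp(S') \ra \subset \la \supp(S_0) \ra$, as required.

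The argument is soft, and the only points needing care are exactly the two verifications above: that the juxtaposition of product-one sequences is again product-one, and---the genuinely essential use of the hypothesis---that idempotency of $[S]$ is what keeps the juxtaposed sequence in the same class. Everything else is bookkeeping, powered by the finiteness of the subgroup lattice of $G$. The main conceptual obstacle, such as it is, lies in recognizing at the outset that one should aim for the join over all occurring support-subgroups, and that the class operation together with idempotency is precisely what allows one to build it.
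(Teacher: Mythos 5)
Your proof is correct and takes essentially the same approach as the paper's: both construct $S_0$ by juxtaposing finitely many product-one representatives of the class $[S]$ (you index them by the finitely many subgroups $\la \supp(S') \ra$, the paper by the finitely many elements of $\bigcup \supp(S')$), and both rely on the congruence property of $\sim$ together with idempotency of $[S]$ to keep the juxtaposition in the class. The only cosmetic difference is how nonemptiness of the set of product-one representatives is secured: you note $S \in \mathcal B (G)$ itself via Proposition \ref{3.3}, while the paper uses $S^{[m]} \in \mathcal B (G)$ with $m = \lcm \{ \ord (g) \mid g \in \supp (S) \}$.
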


\begin{proof}
If $m = \lcm \{ \ord (g) \t \in \supp (S) \}$, then $S^{[m]} \in \mathcal B (G)$ and $\big[S^{[m]}\big] = [S]$. We set
\[
  X = \bigcup \big\{ \supp(S') \t S' \in \mathcal B (G), [S'] = [S] \big\} \subset G \,,
\]
say $X = \{ g_1, \ldots, g_{\ell} \}$ with $\ell \in \N$. Then for each $i \in [1,\ell]$, we set
\[
  S_i \in \mathcal B (G) \quad \mbox{ such that } \quad [S_i] = [S] \und g_i \in \supp(S_i) \,,
\]
and we define $S_0 = S_1 \bdot \ldots \bdot S_{\ell} \in \mathcal B (G)$.
Since $[S]$ is idempotent element, we have $[S_0] = [S]$, and if $S' \in \mathcal B (G)$ such that $[S'] = [S]$, then $\supp(S') \subset X = \supp(S_0)$. Thus $S_0 = S_1\bdot \ldots \bdot S_{\ell}$ has the required properties.
\end{proof}

\smallskip
\begin{proposition} \label{3.5}~
Let $S \in \mathcal F (G)$ be a sequence with $[S] \in \mathsf E \Big( \mathcal C \big(\mathcal B (G), \mathcal F (G)\big) \Big)$. Then there exist a subgroup $G_0 \subset G$ and a homomorphism  $\varphi_{[S]} \colon \mathcal H \big([S]\big) \to G_0/G'_0$ such that   $\pi(T)$ is a $G_0'$-coset for every sequence $T \in \mathcal F (G)$ with $[T] \in \mathcal H \big([S]\big)$. In particular, we have the following special cases{\rm \,:}
\begin{enumerate}
\item If $[S]$ is the smallest idempotent, then $G_0 = G$ and $\varphi_{[S]}$ is an isomorphism.

\smallskip
\item If $[S]$ is the greatest idempotent, then $G_0 = \mathsf Z (G)$ and $\varphi_{[S]}$ is an isomorphism.
\end{enumerate}
\end{proposition}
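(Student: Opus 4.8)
The plan is to exploit the group structure of $\mathcal H\big([S]\big)$ together with the multiplicativity of $\pi$ under juxtaposition. By Lemma \ref{3.2}, since $[S]$ is idempotent, $\mathcal H\big([S]\big)$ is a subgroup with identity $[S]$; let $S_0$ be the representative of $[S]$ furnished by Lemma \ref{3.4} (maximal support among all representatives), put $G_0 = \la \supp(S_0)\ra$, and recall from Proposition \ref{3.3} that $\pi(S) = \pi(S_0) = G_0'$. I would define $\varphi_{[S]}\big([T]\big) := \pi(T)$, and the crux is to show that for every $T$ with $[T]\in\mathcal H\big([S]\big)$ the set $\pi(T)$ is a single coset of $G_0'$ lying inside $G_0$, so that $\varphi_{[S]}$ genuinely maps into $G_0/G_0'$.

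The coset property I would establish in two steps. First, since $[S]$ is the identity of the group and $\mathcal C$ is commutative, $[T\bdot S] = [S\bdot T] = [T]$, so $\pi(T) = \pi(T\bdot S)\supseteq\pi(T)\pi(S) = \pi(T)G_0'$ and likewise $\pi(T) = G_0'\pi(T)$; thus $\pi(T)$ is invariant under left and right translation by $G_0'$. Second, letting $[T^*]$ be the inverse of $[T]$ in $\mathcal H\big([S]\big)$, one has $\pi(T)\pi(T^*)\subseteq \pi(T\bdot T^*) = \pi(S) = G_0'$, so fixing $b\in\pi(T^*)$ gives $\pi(T)\subseteq G_0'b^{-1}$, a single coset; combined with the translation invariance this forces $\pi(T)$ to be exactly one $G_0'$-coset. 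To see $\pi(T)\subseteq G_0$ I would replace $T$ by $T\bdot S_0$ (which changes neither $[T]$ nor $\pi(T)$) and choose $n$ with $n[T] = [S]$; then $(T\bdot S_0)^{[n]}$ is a product-one representative of $[S]$, so Lemma \ref{3.4} gives $\la\supp(T\bdot S_0)\ra = \la\supp\big((T\bdot S_0)^{[n]}\big)\ra\subseteq G_0$, while the reverse inclusion is clear; hence $\la\supp(T\bdot S_0)\ra = G_0$ and, as $\pi$ of a sequence always lies in a single coset of the commutator of the generated subgroup, $\pi(T) = \pi(T\bdot S_0)\subseteq G_0$. Granting this, $\varphi_{[S]}$ is well defined (as $\pi$ is constant on $\sim$-classes), and it is a homomorphism because $\pi(T_1\bdot T_2)$ is a single $G_0'$-coset containing $\pi(T_1)\pi(T_2) = y_1G_0'\,y_2G_0' = y_1y_2G_0'$ (using $G_0'\trianglelefteq G_0$), so the two cosets coincide; moreover $\varphi_{[S]}\big([S]\big) = G_0'$ is the identity.

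For the special cases I would first pin down $G_0$. If $[S] = e_0$ is the smallest idempotent, then $\mathcal H(e_0)$ is the minimal ideal $e_0 + \mathcal C$, so $e_0 + [g]\in\mathcal H(e_0)$ for every $g\in G$; applying the support computation above to the representative $S_0\bdot g$ shows $g\in G_0$, hence $G_0 = G$ and $G_0' = G'$. If $[S] = [1]$ is the greatest (identity) idempotent, then $S_0\sim 1$ yields $\pi(S_0\bdot R) = \pi(R)$ for all $R$; taking $R$ a single element $h$ and inserting $h$ directly after a term $g_i$ in a product-one ordering of $S_0$ gives $g_i h g_i^{-1} = h$, so $\supp(S_0)\subseteq\mathsf Z(G)$, while $z\bdot z^{-1}\sim 1$ for central $z$ gives the reverse inclusion; hence $G_0 = \mathsf Z(G)$ and $G_0' = \{1_G\}$.

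It then remains to prove $\varphi_{[S]}$ is an isomorphism in these two cases. Surjectivity is immediate: $\varphi_{e_0}(e_0 + [g]) = \pi(S_0\bdot g) = gG'$ covers $G/G'$, and $\varphi_{[1]}([z]) = \{z\}$ covers $\mathsf Z(G)$ (note $[z]$ is a unit since $[z] + [z^{-1}] = [1]$). For injectivity in the smallest case I would use the absorption identity $\pi(S_0\bdot R) = $ (the full $G'$-coset determined by the product of the terms of $R$), which follows from $\pi(S_0) = G'$ and full support, to check that $e_0 + [U]$ depends only on that coset; in the greatest case the centrality of $\supp(T)$ forces $\pi(T) = \{1_G\}$ to mean the terms of $T$ multiply to $1_G$, and central terms factor out of every $\pi(T\bdot R)$, giving $T\sim 1$. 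I expect the genuine obstacle to be the containment $\pi(T)\subseteq G_0$ in the \emph{maximal} subgroup $G_0$ via Lemma \ref{3.4} and the power $(T\bdot S_0)^{[n]}$, since everything downstream --- the homomorphism property and both isomorphisms --- hinges on having correctly identified $G_0$ and on $\pi(T)$ being a single coset inside it.
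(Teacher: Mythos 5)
Your proof is correct, and for the special cases it takes a genuinely different route from the paper's. The core of your main argument --- taking the Lemma \ref{3.4} representative $S_0$, setting $G_0 = \la \supp(S_0) \ra$ with $\pi(S_0) = G_0'$ by Proposition \ref{3.3}, forcing $\supp(T) \subset G_0$ through a product-one representative of $[S]$ built from $T$ (you use $(T\bdot S_0)^{[n]}$ where the paper uses $T^{[n]}$), and extracting the coset property from $\pi(T) = \pi(T)\pi(S)$ --- is essentially the paper's proof; your extra step with the inverse class $[T^*]$ to pin $\pi(T)$ inside a single coset is a harmless variant of the paper's appeal to the fact that $\pi$ of a sequence over $G_0$ lies in one $G_0'$-coset. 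Where you diverge is in the two special cases: the paper identifies $G_0$ via [Oh18a, Lemmas 3.7.2 and 3.7.3] and then simply cites [Oh18a, Theorem 3.8] for both isomorphism statements, whereas you prove everything in place --- for the smallest idempotent you use the semigroup fact that $\mathcal H(e_0)$ is the minimal ideal $e_0 + \mathcal C$ (so $e_0 + [g] \in \mathcal H(e_0)$ for every $g \in G$, giving $G_0 = G$), surjectivity via $\pi(S_0 \bdot g) = gG'$, and injectivity via the absorption identity; for the greatest idempotent you get $\supp(S_0) \subset \mathsf Z (G)$ by the cyclic-shift-and-insert argument (which indeed yields $g_i h g_i^{-1} = h$, using that a product-one ordering may start at any term) and then check bijectivity directly. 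Your route is longer but self-contained and makes explicit that both isomorphisms are induced by $\pi$ itself; the paper's is shorter by delegating to prior results, and in fact its step ``$g^{[n]}\bdot S \sim S$'' quietly relies on that cited injectivity, which your minimal-ideal argument avoids. Two small points to write out in a final version: that every representative of an idempotent class is a product-one sequence (needed before invoking Lemma \ref{3.4} for $(T \bdot S_0)^{[n]}$; it follows from Proposition \ref{3.3} since $\pi$ of such a class is a subgroup), and a one-line proof that $\mathcal H (e_0) = e_0 + \mathcal C$ for the smallest idempotent (if $x = e_0 + a$, then some multiple $nx$ is idempotent and satisfies $nx \le e_0$, so $nx = e_0$ by minimality, whence $x$ is invertible in $e_0 + \mathcal C$) --- the paper's Lemma \ref{3.2} does not state this fact.
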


\begin{proof}
We may suppose that $S$ satisfies the conditions given in Lemma \ref{3.4}.
Then, by Proposition \ref{3.3}, we obtain that $G_0 = \langle \supp(S) \rangle \subset G$ is a subgroup with $G'_0 = \pi(S)$.

Let $T \in \mathcal F (G)$ be a sequence with $[T] \in \mathcal H \big([S]\big)$. Since $\mathcal H \big([S]\big)$ is finite group by Lemma \ref{3.2}.1 and Lemma \ref{2.2}.1, it follows that there is an $n \in \N$ such that $\big[T^{[n]}\big] = [S]$. From the choice of $S$, we have that
\[
  \langle \supp(T) \rangle = \langle \supp(T^{[n]}) \rangle \subset \langle \supp(S) \rangle = G_0 \,.
\]
It follows that $T \in \mathcal F (G_0)$ and hence $\pi(T)$ is contained in a $G'_0$-coset. Since $[T] \in \mathcal H \big([S]\big)$ and $1_G \in \pi(S)$,
\[
 [T] = [T\bdot S] \quad \mbox { implies that } \quad \pi(T) \subset \pi(T)\pi(S) \subset \pi(T\bdot S) = \pi(T) \,,
\]
whence $\pi(T) = \pi(T)\pi(S)$. Thus we obtain that $\pi(T) = gG'_0$ for some $g \in \pi(T) \subset G_0$, and therefore the map
\[
    \begin{aligned}
      \varphi_{[S]} : \mathcal H \big([S]\big) \quad & \to \quad G_0/G'_0 \\
                                    [T]        \quad & \mapsto  \quad \pi(T) = gG'_0
    \end{aligned}
\]
is the desired homomorphism. Indeed, if $[T], [T'] \in \mathcal H \big([S]\big)$, then $\pi(T\bdot T') = (gg')G'_0$, where $g \in \pi(T)$ and $g' \in \pi(T')$, whence
\[
  \varphi_{[S]} \big([T] + [T'] \big) = \varphi_{[S]} \big([T\bdot T']\big) = (gg')G'_0 = (gG'_0)(g'G'_0) = \varphi_{[S]}\big([T]\big)\varphi_{[S]}\big([T']\big) \,.
\]

\smallskip
1. Let $[S]$ be the smallest idempotent. We show that $\langle \supp (S) \rangle = G$. If $g \in G$, then $g\bdot S \in \mathcal F (G)$ with $\pi(g\bdot S) = gG'$ by \cite[Lemma 3.7.3]{Oh18a}.
Then there is an $n \in \N$ such that $\pi(g^{[n]} \bdot S) = G'$. It follows that $g^{[n]} \bdot S \sim S$, and thus $\langle \supp(g \bdot S) \rangle = \langle \supp(g^{[n]} \bdot S) \rangle \subset \langle \supp(S) \rangle$, where the last inclusion follows from the maximality property in Lemma \ref{3.4}. Therefore we obtain that $g \in \langle \supp(S) \rangle$ whence $\langle \supp(S) \rangle = G$. Thus Lemma \ref{3.2} and \cite[Theorem 3.8.1]{Oh18a} imply that $\mathcal H \big([S]\big)$ is isomorphic to $G/G'$.

\smallskip
2. Since $[1_{\mathcal F (G)}]$ is the greatest idempotent,  we have $[S] = [1_{\mathcal F (G)}]$. We show that $\langle \supp (S) \rangle = \mathsf Z (G)$. Since $[1_{\mathcal F (G)}] = \mathcal B \big(\mathsf Z (G)\big)$ by \cite[Lemma 3.7.2]{Oh18a}, it follows that $\langle \supp(S) \rangle \subset \mathsf Z (G)$.
If $g \in \mathsf Z (G)$, then $g\bdot g^{-1} \in \mathcal B \big(\mathsf Z (G)\big)$. It follows that $g\bdot g^{-1} \sim S$, and hence the maximality property in Lemma \ref{3.4} implies $\langle g, g^{-1} \rangle \subset \langle \supp(S) \rangle$. Therefore we obtain that $g \in \langle \supp(S) \rangle$ whence $\langle \supp(S) \rangle = \mathsf Z (G)$. Thus Lemma \ref{3.2} and \cite[Theorem 3.8.2]{Oh18a} imply that $\mathcal H \big([1_{\mathcal F (G)}]\big)$ is isomorphic to $\mathsf Z (G)$.
\end{proof}

\smallskip
Let $\mathcal C = \mathcal C \big(\mathcal B (G), \mathcal F (G)\big)$, and let $S \in \mathcal F (G)$ be a sequence with $[S] \in \mathsf E (\mathcal C)$. Since $[S]$ is idempotent, it follows that $[S] + \mathcal C$ is a subsemigroup of $\mathcal C$ with identity element $[S]$. Now we state the first part of our main result, namely that $\mathcal B (G)$ is seminormal if and only if its class semigroup is Clifford.

\smallskip
\begin{theorem} \label{3.6}~
Let $\mathcal C = \mathcal C \big(\mathcal B (G), \mathcal F (G)\big)$. Then the following statements are equivalent{\rm \,:}
\begin{enumerate}
\item[(a)] $\mathcal C$ is a Clifford semigroup.

\smallskip
\item[(b)] $\mathcal B (G)$ is a seminormal monoid.
\end{enumerate}
Suppose that $\mathcal C$ is a Clifford semigroup. Let $S_1, \ldots, S_n \in \mathcal F (G)$ be such that $\mathsf E (\mathcal C) = \big\{ [S_1], \ldots, [S_n] \big\}$, and let $\mathcal C_i = [S_i] + \mathcal C$ for all $i \in [1,n]$. Then the map
\[
  \varphi \colon \mathcal C \,\, \to \,\, \mathcal C^{*} = \Big\{ \big( [T\bdot S_1], \ldots, [T\bdot S_n] \big) \t T \in \mathcal F (G) \Big\} \,\, \subset \,\, \prod_{i=1}^{n} \mathcal C_i \,,
\]
defined by $\varphi \big( [T] \big) = \big( [T\bdot S_1], \ldots, [T\bdot S_n] \big)$ for all $T \in \mathcal F (G)$, is an isomorphism with the following properties{\rm \,:}
\begin{enumerate}
\item If $i, j \in [1,n]$ and $T \in \mathcal F (G)$ is a sequence with $[T] \in \mathcal H \big([S_i]\big)$, then
      \[
         [S_j] \,\, \le \,\, [S_i] \quad \mbox{ if and only if } \quad [T\bdot S_j] \in \mathcal H \big([S_j]\big) \,.
      \]
      In particular, $\mathcal C^{*}$ has no zero element.

\smallskip
\item For every $i \in [1,n]$,
      \[
        \mathcal C_i = \bigcup \mathcal H \big([S_j]\big) \,,
      \]
      where the union is taken over those $j \in [1,n]$ with $[S_j] \le [S_i]$.

\smallskip
\item For every $i \in [1,n]$, the restriction of the projection $p_i{\big|}_{\mathcal C^{*}} \colon \mathcal C^{*} \to \mathcal C_i \big/ (\mathcal C_i \setminus \mathcal C^{\times}_i)$ is surjective.

\smallskip
\item $\mathcal B (G) = \bigcup_{i=1}^{n} \{ T \in \mathcal F (G) \t [T] \in \ker ( \varphi_{[S_i]} ) \}$, where $\varphi_{[S_i]}$ is the homomorphism defined in \textnormal{Prop. \ref{3.5}}.
\end{enumerate}
\end{theorem}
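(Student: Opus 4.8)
The plan is to work throughout inside the finite commutative semigroup $\mathcal{C}=\mathcal{C}\big(\mathcal B(G),\mathcal F(G)\big)$, using two elementary facts repeatedly. First, whether a sequence lies in $\mathcal B(G)$ depends only on its class (take $T=1_{\mathcal F(G)}$ in the definition of $\sim$), so I may speak of a class being product-one; and $\pi$ is a class invariant. Second, since $\mathcal C$ is finite, each $[S]$ has an idempotent power $e=\big[S^{[k]}\big]$, and from the Rees-order discussion $[S]$ is regular (lies in a subgroup, by Lemma~\ref{3.2}) if and only if $e+[S]=[S]$, i.e. if and only if $S\sim S^{[k+1]}$. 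I will also use the two structural inputs already available: Proposition~\ref{3.3}, giving $\pi(X)=\langle\supp(X)\rangle'$ for every idempotent class $[X]$, and Proposition~\ref{3.5}, giving for an idempotent $[S_i]$ the homomorphism $\varphi_{[S_i]}\colon\mathcal H([S_i])\to G_i/G_i'$ with $\varphi_{[S_i]}([T])=\pi(T)$ a $G_i'$-coset. The latter yields the recurring observation that a class $[T]\in\mathcal H([S_i])$ is product-one exactly when $\pi(T)=G_i'$, i.e. exactly when $[T]\in\ker\varphi_{[S_i]}$.

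For (a)$\Rightarrow$(b): since $x^2,x^3\in\mathcal B(G)$ already forces $x\in\mathcal F(G)$ (as $2\mathsf v_g(x)\ge 0$), seminormality amounts to the implication $S^{[2]},S^{[3]}\in\mathcal B(G)\Rightarrow S\in\mathcal B(G)$. Assuming $\mathcal C$ Clifford, $[S]$ lies in some maximal subgroup $\mathcal H([S_i])$, whence $2[S]=\big[S^{[2]}\big]$ and $3[S]=\big[S^{[3]}\big]$ lie in the same group and, being product-one, lie in the subgroup $\ker\varphi_{[S_i]}$; then $[S]=3[S]-2[S]\in\ker\varphi_{[S_i]}$, so $S\in\mathcal B(G)$. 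This argument simultaneously yields property (4), since every $[T]$ lies in a unique $\mathcal H([S_i])$ and is product-one iff $[T]\in\ker\varphi_{[S_i]}$.

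For (b)$\Rightarrow$(a): fix $S$, put $e=\big[S^{[k]}\big]$ with $G_0=\langle\supp(S)\rangle$ and $\pi\big(S^{[k]}\big)=G_0'$ by Proposition~\ref{3.3}; it suffices to prove $S\sim S^{[k+1]}$. The inclusion $S\bdot T\in\mathcal B(G)\Rightarrow S^{[k+1]}\bdot T\in\mathcal B(G)$ is immediate from $\pi\big(S^{[k]}\big)=G_0'\ni1_G$ and $\pi(X\bdot Y)\supseteq\pi(X)\pi(Y)$. For the reverse, given $S^{[k+1]}\bdot T\in\mathcal B(G)$ I set $W=S\bdot T$, so $S^{[k]}\bdot W\in\mathcal B(G)$; writing $G_W=\langle\supp(W)\rangle$ and using $\supp(S)\subseteq\supp(W)$, hence $G_0'\subseteq G_W'$, the image of $\pi(W)$ in the abelianization $G_W/G_W'$ must be trivial, i.e. $\pi(W)\subseteq G_W'$. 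The key lemma is then: if $\pi(W)\subseteq G_W'$, then $1_G\in\pi\big(W^{[j]}\big)$ for all large $j$; indeed, choosing $k_W$ with $\big[W^{[k_W]}\big]$ idempotent (so $\pi\big(W^{[k_W]}\big)=G_W'$ by Proposition~\ref{3.3}) and writing $j=mk_W+r$ with $m\ge 1$, one has $\pi\big(W^{[j]}\big)\supseteq\pi\big(W^{[r]}\big)\cdot G_W'=G_W'\ni 1_G$, because $\pi\big(W^{[r]}\big)\subseteq G_W'$. Thus $W$ lies in the seminormalization, and seminormality gives $W\in\mathcal B(G)$, completing $S\sim S^{[k+1]}$ and hence regularity of every class. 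I expect this reverse inclusion to be the main obstacle, the delicate point being the passage to $G_W/G_W'$: it is precisely the extra copy of $S$ inside $W=S\bdot T$ that makes $G_0'\subseteq G_W'$ available and forces $\pi(W)\subseteq G_W'$.

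Finally, under the Clifford hypothesis $\mathcal C$ is a finite semilattice of the groups $\mathcal H([S_i])$ (Lemma~\ref{3.2}), and I would derive the isomorphism and (1)--(3) by routine bookkeeping in this structure, noting that $[S_i]+[S_j]$ is again idempotent and $[S_i]+[S_j]=[S_j]\iff[S_j]\le[S_i]$. The map $\varphi$ is a homomorphism into $\prod_i\mathcal C_i$ because each coordinate $[T]\mapsto[T]+[S_i]$ is; it is injective since if $[T]\in\mathcal H([S_a])$, $[T']\in\mathcal H([S_b])$ and $[T]+[S_i]=[T']+[S_i]$ for all $i$, then taking $i=a$ and $i=b$ forces $[S_a]=[S_b]$ and then $[T]=[T']$. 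Property (1) is the inclusion $\mathcal H([S_i])+\mathcal H([S_j])\subseteq\mathcal H([S_i]+[S_j])$ together with the equivalence above; taking $i=j$ shows each $\varphi([T])$ has a unit coordinate, so $\mathcal C^{*}$ has no zero element. Property (2) follows because $[S_i]+c=c$ for $c\in\mathcal H([S_j])$ when $[S_j]\le[S_i]$, while every element of $\mathcal C_i$ lands in some $\mathcal H([S_i]+[S_j])$ with $[S_i]+[S_j]\le[S_i]$; and property (3) by using $[T]=g\in\mathcal H([S_i])$ to hit the units of $\mathcal C_i$ and $[T]=[S_j]$ with $[S_j]<[S_i]$ to hit the zero of the Rees quotient $\mathcal C_i/(\mathcal C_i\setminus\mathcal C_i^{\times})$.
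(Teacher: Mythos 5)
Your proposal is correct, and its core---the implication (b) $\Rightarrow$ (a)---takes a genuinely different route from the paper. The paper deduces $Z\bdot T\in\mathcal B(G)$ from $Z\bdot T^{[n+1]}\in\mathcal B(G)$ by a purely congruence-theoretic induction: it proves $(Z\bdot T)^{[m]}\bdot T^{[n]}\in\mathcal B(G)$ for all $m$, extracts $(Z\bdot T)^{[n]},\,(Z\bdot T)^{[n+1]}\in\mathcal B(G)$, and then uses the numerical-semigroup fact that every sufficiently large integer is a non-negative combination of $n$ and $n+1$, before invoking seminormality. You instead argue semantically: writing $W=S\bdot T$ and $G_W=\la \supp(W)\ra$, you combine Proposition \ref{3.3} (applied to the idempotent power of $S$) with the fact that $\pi$ of a sequence with support in $G_W$ lies in a single $G_W'$-coset to get $\pi(W)\subset G_W'$, and then apply Proposition \ref{3.3} a second time (to the idempotent power of $W$) to conclude $1_G\in\pi\big(W^{[j]}\big)$ for all large $j$; seminormality finishes exactly as in the paper. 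Your route is shorter, avoids the induction and the numerical trick, and isolates a clean characterization of the relevant part of the seminormalization (sequences $W$ with $\pi(W)\subset\la\supp(W)\ra'$), at the price of leaning on the group-theoretic machinery twice where the paper needs it only for the base case; both arguments are sound. Your (a) $\Rightarrow$ (b) is the paper's argument repackaged through $\ker(\varphi_{[S_i]})$---the subtraction $[S]=3[S]-2[S]$ inside the group $\mathcal H\big([S_i]\big)$ replaces the paper's observation that $g^2,g^3\in G_0'$ forces $g\in G_0'$---and, like the paper, it yields property 4 at no extra cost. The structural claims are only sketched in your write-up, but the lemmas you name do suffice: $\mathcal H\big([S_i]\big)+\mathcal H\big([S_j]\big)\subset\mathcal H\big([S_i\bdot S_j]\big)$ together with disjointness of maximal subgroups (Lemma \ref{3.2}.2) gives property 1 and then property 2; your injectivity argument (adding inverses in $\mathcal H\big([S_a]\big)$ and $\mathcal H\big([S_b]\big)$ to force $[S_a]=[S_a\bdot S_b]=[S_b]$) is in fact more direct than the paper's two-case analysis; and for property 3 and the ``no zero element'' remark your reasoning sits at the same level of precision as the paper's own (both tacitly use that the bottom maximal subgroup, isomorphic to $G/G'$, is nontrivial).
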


\begin{proof}
(a) $\Rightarrow$ (b)  Let $T \in \mathsf q (\mathcal B (G)) \subset \mathsf q (\mathcal F (G))$ be such that $T^{[2]}, T^{[3]} \in \mathcal B (G)$. Since $\mathcal F (G)$ is free abelian, it is seminormal whence  $T \in \mathcal F (G)$. We have to show that $1_G \in \pi(T)$.
Since $\mathcal C$ is Clifford, there is a $S \in \mathcal F (G)$ such that $[S] \in \mathsf E ( \mathcal C )$ and $[T] \in \mathcal H \big([S]\big)$. Then, by Lemma \ref{3.4} and Proposition \ref{3.5}, we obtain that $\pi(T) = gG'_0$, where $G_0 = \la \supp(S) \ra$ with $\pi(S) = G'_0$ and $g \in G_0$.
Since $T^{[2]}, T^{[3]} \in \mathcal B (G)$, we have that $g^{2}, g^{3} \in G'_0$, and it follows that $g \in G'_0$.
Thus $1_G \in G'_0 = \pi(T)$.

\smallskip
(b) $\Rightarrow$ (a) Let $T \in \mathcal F (G)$ be a sequence. Since $\mathcal C$ is finite by Lemma \ref{2.1}.1, there is an $n \in \N$ such that $\big[ T^{[n]} \big] \in \mathsf E (\mathcal C)$.
It suffices to show that $T^{[n+1]} \sim T$. Indeed, if this holds true, then $[T]$ generates a cyclic subgroup and hence $\mathcal C$ is Clifford.
Let $Z \in \mathcal F (G)$ be any sequence. If $Z \bdot T \in \mathcal B (G)$, then $Z\bdot T^{[n+1]} \in \mathcal B (G)$ because $\big[T^{[n]}\big] \in \mathsf E (\mathcal C)$ implies $T^{[n]} \in \mathcal B (G)$ by Proposition \ref{3.3}.

Conversely, we suppose that $Z\bdot T^{[n+1]} \in \mathcal B (G)$ and have to verify that $Z\bdot T \in \mathcal B (G)$. We assert that
\begin{equation} \label{important}
(Z\bdot T)^{[m]} \bdot T^{[n]} \in \mathcal B (G) \quad \mbox{ for all } \,\, m \in \N_0 \,,
\end{equation}
and we proceed by induction on $m$. It holds for $m=0$ by Proposition \ref{3.3}. If it holds for $m\in \N_0$, then we infer by the induction hypothesis with the equivalence $T^{[n]} \bdot T^{[n]} \sim T^{[n]}$ that
\[
(Z\bdot T)^{[m+1]} \bdot T^{[n]} \sim (Z \bdot T)^{[m]} \bdot T^{[n]} \bdot Z \bdot T^{[n+1]} \in \mathcal B (G)
\]
whence $(Z\bdot T)^{[m+1]} \bdot T^{[n]} \in \mathcal B (G)$. Using Equation \eqref{important} with $m=n$ we infer that
\[
(Z \bdot T)^{[n]} \sim (Z \bdot T)^{[n]} \bdot T^{[n]} \in \mathcal B (G)
\]
whence $(Z \bdot T)^{[n]} \in \mathcal B (G)$. Furthermore,
\[
(Z \bdot T)^{[n+1]} \sim (Z \bdot T)^{[n]} \bdot Z \bdot T^{[n+1]} \in \mathcal B (G)
\]
whence $(Z \bdot T)^{[n+1]} \in \mathcal B (G)$.
Thus there exists an $N \in \N$ such that any integer $\ell \ge N$ can be written as a non-negetive linear combination of the integers $n$ and $n+1$. It follows that $(Z\bdot T)^{[\ell]} \in \mathcal B (G)$ for all $\ell \ge N$. Since $\mathcal B (G)$ is seminormal, we obtain that $Z\bdot T \in \mathcal B (G)$.

\smallskip
Suppose now that $\mathcal C$ is a Clifford semigroup. First we show the statements 1. - 4., and then we prove that $\varphi$ is an isomorphism.

\smallskip
1. Let $i, j \in [1,n]$, and let $T \in \mathcal F (G)$ be a sequence with $[T] \in \mathcal H \big([S_i]\big)$. Since $\mathcal H \big([S_i]\big)$ is a group by Lemma \ref{3.2}.1, it follows that there is a sequence $T' \in \mathcal F (G)$ such that $[T']$ is the inverse of $[T]$ in $\mathcal H \big([S_i]\big)$.
If $[S_j] \le [S_i]$, then
\[
  [T\bdot S_j] = [T\bdot S_j] + [S_j] \und [T\bdot S_j] + [T'\bdot S_j] = [S_i \bdot S_j] = [S_j] \,,
\]
and it follows that $[T\bdot S_j] \in \mathcal H \big([S_j]\big)$. For the converse, we assume $[T\bdot S_j] \in \mathcal H \big([S_j]\big)$. Since $[T] \in \mathcal H \big([S_i]\big)$, we have $[T] = [T \bdot S_i]$. Since $[S_j] \in \mathsf E (\mathcal C)$, it follows that
\[
  [T\bdot S_j] = [T\bdot S_j] + [S_i \bdot S_j] \und [T\bdot S_j] + [T' \bdot S_i \bdot S_j] = [S_i \bdot S_j] \,,
\]
whence $[T\bdot S_j] \in \mathcal H \big([S_j]\big) \cap \mathcal H \big([S_i \bdot S_j]\big)$. Thus Lemma \ref{3.2}.2 implies $[S_i \bdot S_j] = [S_j]$, equivalently $[S_j] \le [S_i]$.

In particular, if $[S_n] \le [S_i]$ for all $i \in [1,n]$, then $\mathcal C_n = \mathcal C^{\times}_n = \mathcal H \big([S_n]\big)$ by \cite[Theorem 3.8.1]{Oh18a}, and it follows that $\mathcal C^{*}$ has no zero element.

\smallskip
2. Observe that, for every $i \in [1,n]$, the map $\mathcal C \to \mathcal C_i$, given by $[T] \mapsto [T\bdot S_i]$ for $T \in \mathcal F (G)$, is an epimorphism, and thus $\mathcal C_i$ is Clifford because the epimomorphic image of Clifford semigroups are Clifford.

Let $i \in [1,n]$ be given, and let $T \in \mathcal F (G)$ be a sequence with $[T] \in \mathcal C_i$. Since $\mathcal C_i$ is Clifford, there is a subgroup $G_0$ of $\mathcal C_i$ containing $[T]$. If we denote by $[S_0]$ the identity element of $G_0$, then $[S_0]$ is idempotent in $\mathcal C$. Since $[S_0] \in \mathcal C_i$, we have that $[S_0] = [Z \bdot S_i]$ for some sequence $Z \in \mathcal F (G)$. Since $[S_i] \in \mathsf E (\mathcal C)$, it follows that
\[
  [S_0 \bdot S_i] = [Z \bdot S_i \bdot S_i] = [Z \bdot S_i] = [S_0] \,,
\]
whence $[S_0] = [S_j]$ for some $j \in [1,n]$ with $[S_j] \le [S_i]$. Thus $[T] \in \mathcal H \big([S_j]\big)$ because $\mathcal C$ is Clifford, and hence $\mathcal C_i \subset \bigcup \mathcal H \big([S_j]\big)$, where the union is taken over those $j \in [1,n]$ with $[S_j] \le [S_i]$. Conversely, if $j \in [1,n]$ with $[S_j] \le [S_i]$ and $T \in \mathcal F (G)$ such that $[T] \in \mathcal H \big([S_j]\big)$, then
\[
  [T] = [T\bdot S_j] = [T\bdot (S_j \bdot S_i)] = [(T\bdot S_j) \bdot S_i] = [T \bdot S_i] \in \mathcal C_i \,\, \mbox{ whence } \,\, \mathcal H \big([S_j]\big) \subset \mathcal C_i \,.
\]

\smallskip
3. It follows from the very definition of the Rees quotient of $\mathcal C_i$ by its ideal $\mathcal C_i \setminus \mathcal C^{\times}_i$ for $i \in [1,n]$.

\smallskip
4. Let $T \in \mathcal B (G)$ be a product-one sequence. Since $\mathcal C$ is Clifford, there exists an $i \in [1,n]$ such that $[T] \in \mathcal H \big([S_i]\big)$. By Proposition \ref{3.5}, $\pi (T)$ is a $G_0'$-coset with $G_0'= \pi (S_i)$. Since $1_G \in \pi (T)$, it follows that $\pi (T) = G_0'$ whence $[T] \in \ker (\varphi_{[S_i]})$. The reverse inclusion runs along the same lines.

\smallskip
Having established 1. - 4. we show that the map 
\[
  \begin{aligned}
   \varphi \colon \mathcal C  & \quad \to \quad \mathcal C^{*} \\
                         [T]  & \quad \mapsto \quad \big( [T\bdot S_1], \ldots, [T\bdot S_n] \big)
  \end{aligned}
\]
is an isomorphism. Observe that, for each sequence $T \in \mathcal F (G)$,
\[
  \varphi \big( [T] \big) = \big( [T\bdot S_1], \ldots, [T\bdot S_n] \big) = \big( \varphi_1 \big([T]\big), \ldots, \varphi_n \big([T]\big) \big) \,,
\]
where for each $i \in [1,n]$, $\varphi_i \colon \mathcal C \to \mathcal C_i$ is an epimorphism.
Then it suffices to show that $\varphi$ is injective. 

To do so, we assume that $\varphi \big([T]\big) = \varphi \big([W]\big)$ for $T, W \in \mathcal F (G)$.
Since $\mathcal C$ is Clifford, there exist $i, j \in [1,n]$ such that $[T] \in \mathcal H \big([S_i]\big)$ and $[W] \in \mathcal H \big([S_j]\big)$.
It suffices to show that $[S_i] = [S_j]$. Indeed, if this holds true, then we obtain that
\[
  [T] = [T \bdot S_i] = [W \bdot S_i] = [W \bdot S_j] = [W] \,.
\]

\smallskip
\noindent CASE 1{\rm \,:} \, $[S_i] = [1_{\mathcal F (G)}]$ or $[S_j] = [1_{\mathcal F (G)}]$.

Without loss of generality, we may assume that $[S_i] = [1_{\mathcal F (G)}]$. Then we have $[T] = [T] + [S_i] = [W] + [S_i] = [W]$, whence $[W] = [T] \in \mathcal H \big([S_i]\big)$. Thus $[S_j] = [S_i]$ by Lemma \ref{3.2}.2.

\smallskip
\noindent CASE 2{\rm \,:} \, $[S_i] \neq [1_{\mathcal F (G)}]$ and $[S_j] \neq [1_{\mathcal F (G)}]$.

Consider the idempotent element $[S_i\bdot S_j]$. We assert that $[S_i \bdot S_j] = [S_i]$. Assume to the contrary that $[S_i \bdot S_j] \neq [S_i]$.
Then, since $[S_i \bdot S_j] \le [S_j]$, Item 1. implies $[W \bdot S_i \bdot S_j] \in \mathcal H \big([S_i \bdot S_j]\big)$. However, since $[S_i \bdot S_j] \neq [S_i]$, it follows that $[W \bdot S_i \bdot S_j] \notin \mathcal H \big([S_i]\big)$ by Lemma \ref{3.2}.2.
Hence $[T] = [T \bdot S_i] = [W \bdot S_i] = [W \bdot S_j \bdot S_i] \notin \mathcal H \big([S_i]\big)$, a contradiction. Thus $[S_i \bdot S_j] = [S_i]$.
By symmetry, we obtain that $[S_i \bdot S_j] = [S_j]$ whence $[S_i] = [S_j]$.
\end{proof}

\smallskip
\begin{remark} \label{3.7}~

Every finite commutative semigroup has a Ponizovsky decomposition (see \cite[Chapter IV.4]{Gr01}). We outline here that the isomorphic image $\mathcal C^{*}$ given in Theorem \ref{3.6} is an explicit description of the Ponizovsky decomposition of $\mathcal C$. Let all notation and assumption be as in Theorem \ref{3.6}. 
\begin{enumerate}
\item Let $i \in [1,n]$. Since every element $[T] \in \mathcal H \big([S_i]\big)$ has the form $[T] = [T] + [S_i]$ and $\mathcal H \big([S_i]\big)$ is a group with identity $[S_i]$, it follows that $\mathcal H \big([S_i]\big) \subset \mathcal C_i^{\times}$. Since $\mathcal C_i \subset \mathcal C$ is a subsemigroup and $\mathcal H \big([S_i]\big)$ is a maximal subgroup of $\mathcal C$, we obtain that $\mathcal H \big([S_i]\big) = \mathcal C_i^{\times}$.

\smallskip
\item By item $2$ in Theorem \ref{3.6}, it is easy to show that, for each $i \in [1,n]$, $\mathcal C_i \setminus \mathcal C_i^{\times} = \bigcup \mathcal C_j$, where the union is taken over those $j \in [1,n]$ with $[S_j] < [S_i]$. By the definition of the Ponizovsky factor in \cite{Gr01}, one can obtain that for each $i \in [1,n]$
\[
  P_{[S_i]} = \mathcal C_i \big/ (\mathcal C_i \setminus \mathcal C_i^{\times}) = \mathcal H \big([S_i]\big) \cup \{ 0_{[S_i]} \} \,,
\]
where $0_{[S_i]}$ is the zero element in Rees quotient. Combining \cite[Proposition IV.4.8]{Gr01} and item $1$ in Theorem \ref{3.6}, we can see that $\mathcal C^{*}$ is the Ponizovsky decomposition.
\end{enumerate}
\end{remark}

\smallskip
\begin{corollary} \label{3.8}~
Let $N  \subset G$ be a normal subgroup.
\begin{enumerate}
\item There is an epimorphism  $\overline \theta \colon \mathcal C \big(\mathcal B (G), \mathcal F (G)\big) \to \mathcal C \big(\mathcal B (G/N), \mathcal F (G/N)\big)$.
    
\smallskip
\item If $\mathcal B (G)$ is seminormal, then $\mathcal B (N)$ and $\mathcal B (G/N)$ are seminormal and their class semigroups are Clifford.
\end{enumerate}        
\end{corollary}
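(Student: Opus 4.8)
The plan is to deduce part 1 from the canonical epimorphism $\theta \colon G \to G/N$, extended to a monoid homomorphism $\theta \colon \mathcal F(G) \to \mathcal F(G/N)$, and to show that it descends to class semigroups via $\overline\theta\big([S]\big) = [\theta(S)]$. Since $\theta$ is a group homomorphism, $\pi\big(\theta(R)\big) = \theta\big(\pi(R)\big)$ for every $R \in \mathcal F(G)$, and as $\ker(\theta) = N$ this specializes (cf. the fact recorded before Definition \ref{2.1}) to the translation $\theta(R) \in \mathcal B(G/N)$ if and only if $\pi(R) \cap N \neq \emptyset$. Granting that $\overline\theta$ is well defined, the homomorphism property is immediate from $\theta(S \bdot S') = \theta(S) \bdot \theta(S')$, and surjectivity holds because $\theta \colon \mathcal F(G) \to \mathcal F(G/N)$ is onto (lift each term of a given $U \in \mathcal F(G/N)$), so every class $[U]$ equals $\overline\theta\big([T]\big)$ for any preimage $T$ of $U$.

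The main obstacle is the well-definedness of $\overline\theta$, i.e. the implication $S \sim_{\mathcal B(G)} S' \Rightarrow \theta(S) \sim_{\mathcal B(G/N)} \theta(S')$. Writing an arbitrary test sequence in $\mathcal F(G/N)$ as $\theta(T)$ with $T \in \mathcal F(G)$, I must verify $\theta(S \bdot T) \in \mathcal B(G/N) \iff \theta(S' \bdot T) \in \mathcal B(G/N)$. Assuming the left side, the translation gives some $c \in \pi(S \bdot T) \cap N$, and appending the single term $c^{-1} \in G$ yields $S \bdot T \bdot c^{-1} \in \mathcal B(G)$. Applying $S \sim_{\mathcal B(G)} S'$ with test sequence $T \bdot c^{-1}$ produces $S' \bdot T \bdot c^{-1} \in \mathcal B(G)$, that is $1_G \in \pi(S' \bdot T \bdot c^{-1})$. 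The delicate step is to recover a product landing in $N$ for $S' \bdot T$ itself: choosing a product-one ordering of $S' \bdot T \bdot c^{-1}$ and cyclically rotating it (as recorded before Definition \ref{2.1}) so that the appended term $c^{-1}$ comes last, the remaining terms — exactly those of $S' \bdot T$ — multiply to $c \in N$. Hence $c \in \pi(S' \bdot T) \cap N$ and $\theta(S' \bdot T) \in \mathcal B(G/N)$; the reverse implication is symmetric in $S$ and $S'$.

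For part 2, I would first treat $G/N$. If $\mathcal B(G)$ is seminormal then $\mathcal C\big(\mathcal B(G), \mathcal F(G)\big)$ is Clifford by Theorem \ref{3.6}; since the epimorphic image of a Clifford semigroup is Clifford (as already used in the proof of Theorem \ref{3.6}.2), the epimorphism $\overline\theta$ of part 1 shows $\mathcal C\big(\mathcal B(G/N), \mathcal F(G/N)\big)$ is Clifford, and Theorem \ref{3.6} applied to $G/N$ returns that $\mathcal B(G/N)$ is seminormal. For $N$, I would argue seminormality directly from $\mathcal B(N) = \mathcal B(G) \cap \mathcal F(N)$: if $T \in \mathsf q\big(\mathcal B(N)\big)$ satisfies $T^{[2]}, T^{[3]} \in \mathcal B(N)$, then $T \in \mathcal F(N)$ because $\mathcal F(N)$ is free abelian, hence seminormal, while $T \in \mathsf q\big(\mathcal B(N)\big) \subset \mathsf q\big(\mathcal B(G)\big)$ together with $T^{[2]}, T^{[3]} \in \mathcal B(G)$ forces $T \in \mathcal B(G)$ by seminormality of $\mathcal B(G)$; thus $T \in \mathcal B(G) \cap \mathcal F(N) = \mathcal B(N)$. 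Theorem \ref{3.6} applied to $N$ then yields that $\mathcal C\big(\mathcal B(N), \mathcal F(N)\big)$ is Clifford. The only genuinely nontrivial point throughout is the cyclic-rotation step in the well-definedness argument; everything else is bookkeeping with the translation $\theta(R) \in \mathcal B(G/N) \iff \pi(R) \cap N \neq \emptyset$ and the equivalence between seminormality and the Clifford property supplied by Theorem \ref{3.6}.
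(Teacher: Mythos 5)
Your proposal is correct, and for part 1 and for the $G/N$ half of part 2 it is essentially the paper's own proof: the same map $\overline\theta\big([S]\big) = \big[\theta(S)\big]$, the same well-definedness argument (lift the test sequence through the surjection $\theta \colon \mathcal F (G) \to \mathcal F (G/N)$, pick $c \in \pi(S \bdot T) \cap N$, feed $T \bdot c^{-1}$ into the equivalence $S \sim_{\mathcal B (G)} S'$, and recover an element of $\pi(S' \bdot T) \cap N$ by the cyclic-rotation fact recorded before Definition \ref{2.1} --- the paper does exactly this with its sequence $h^{-1} \bdot T_1 \bdot S'$), and the same transfer of the Clifford property along the epimorphism combined with Theorem \ref{3.6}. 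The one place where you diverge is the seminormality of $\mathcal B (N)$: the paper observes that the inclusion $\mathcal B (N) \hookrightarrow \mathcal B (G)$ is a divisor homomorphism and cites \cite[Lemma 3.2.4]{Ge-Ka-Re15a}, which says that seminormality descends along divisor homomorphisms, whereas you prove the needed statement directly from the identity $\mathcal B (N) = \mathcal B (G) \cap \mathcal F (N)$ together with seminormality of the free abelian monoid $\mathcal F (N)$ and of $\mathcal B (G)$. Your direct argument is sound --- it is in effect the proof of the cited lemma specialized to this particular inclusion --- and it has the advantage of being self-contained, while the paper's citation buys brevity and places the fact in its general context; note that neither argument actually uses normality of $N$ at this step, since both the divisor-homomorphism property and your intersection identity hold for arbitrary subgroups.
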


\begin{proof}
1. Let $\theta \colon \mathcal F (G) \to \mathcal F (G/N)$ be the homomorphism extended from the canonical epimorphism $G \to G/N$. We define
\[
  \begin{aligned}
  \bar{\theta} : \mathcal C \big(\mathcal B (G), \mathcal F (G)\big) \quad & \to \quad \mathcal C \big(\mathcal B (G/N), \mathcal F (G/N)\big)\\
                                                                [S]  \quad & \mapsto \quad \big[\theta (S)\big]
  \end{aligned}
\]
In order to verify that $\bar{\theta}$ is well-defined, let $S, S' \in \mathcal F (G)$ be sequences with $S \sim_{\mathcal B (G)} S'$. We have to
show that
\[
  \theta (S) \sim_{\mathcal B (G/N)} \theta (S') \,.
\]
Let $T \in \mathcal F (G/N)$ be a sequence with $T\bdot \theta (S) \in \mathcal B (G/N)$.
Since $\theta$ is surjective, there is a $T_1 \in \mathcal F (G)$ such that $\theta (T_1) = T$, and hence $\theta (T_1\bdot S) = T\bdot \theta (S) \in \mathcal B (G/N)$.
It follows that
\[
  \pi(T_1\bdot S) \cap N \neq \emptyset \,, \und \mbox{say} \,\, h \in \pi(T_1\bdot S) \cap N \,.
\]
Since $S \sim_{\mathcal B (G)} S'$,
\[
  1_G \in \pi(h^{-1}\bdot T_1 \bdot S) \,\, \mbox{ implies that } \,\, 1_G \in \pi(h^{-1}\bdot T_1 \bdot S') \,,
\]
whence $\pi(T_1 \bdot S') \cap N \neq \emptyset$. It follows that $T\bdot \theta (S') = \theta (T_1 \bdot S') \in \mathcal B (G/N)$.

Thus $\bar{\theta}$ is well-defined. Clearly it is surjective and for any sequences $S, S' \in \mathcal F (G)$ we have
\[
  \bar{\theta} \big( [S] + [S'] \big) = \bar{\theta} \big( [S\bdot S'] \big) = \big[\theta (S\bdot S')\big] = \big[\theta (S)\big] + \big[\theta (S')\big] = \bar{\theta} \big([S]\big) + \bar{\theta} \big([S']\big) \,.     
\]

\smallskip
2. Suppose that $\mathcal B (G)$ is seminormal. Since the inclusion $\mathcal B (N) \hookrightarrow \mathcal B (G)$ is a divisor homomorphism, $\mathcal B (N)$ is seminormal by \cite[Lemma 3.2.4]{Ge-Ka-Re15a}. Since epimorphic images of Clifford semigroups are Clifford, the remaining statements follows from 1. and from Theorem \ref{3.6}.
\end{proof}

\smallskip
In our next result we study product-one sequences over the direct product of $G$ with a finite abelian group.  Let $H$ be a finite abelian group.
For any $S = (g_1, h_1)\bdot \ldots \bdot (g_{\ell}, h_{\ell}) \in \mathcal F (G \times H)$, we have
\[
  \pi(S) = \big\{ (g, h) \bigm\vert g \in \pi(g_1 \bdot \ldots \bdot g_{\ell}) \und \{ h \} = \pi(h_1 \bdot \ldots \bdot h_{\ell}) \big\} \,.
\]
In particular,  $S \in \mathcal B (G\times H)$ if and only if $S' = (g_1, h_{\sigma(1)})\bdot \ldots \bdot (g_{\ell}, h_{\sigma(\ell)}) \in \mathcal B (G\times H)$ for any permutation $\sigma$ on $[1,\ell]$.

\smallskip
\begin{theorem} \label{3.9}~
Let $H$ be an abelian group.
\begin{enumerate}
\item The map
\[
  \begin{aligned}
  \mathcal C \big(\mathcal B (G\times H), \mathcal F (G\times H)\big) & \quad \to \quad \mathcal C \big(\mathcal B (G), \mathcal F (G)\big) \times \mathcal C \big(\mathcal B (H), \mathcal F (H)\big)\\
  \big[(g_1, h_1)\bdot \ldots \bdot (g_{\ell}, h_{\ell})\big] & \quad \mapsto \quad \big( [g_1\bdot \ldots \bdot g_{\ell}], [h_1 \bdot \ldots \bdot h_{\ell}]\big)
  \end{aligned}
\]
is a semigroup isomorphism.

\item The following statements are equivalent{\rm \,:}
\begin{enumerate}
\smallskip
\item[(a)] $\mathcal B (G)$ is a seminormal monoid.

\smallskip
\item[(b)] $\mathcal C \big(\mathcal B (G), \mathcal F (G)\big)$ is a Clifford semigroup.

\smallskip
\item[(c)] $\mathcal C \big(\mathcal B (G \times H), \mathcal F (G \times H)\big)$ is a Clifford semigroup.

\smallskip
\item[(d)] $\mathcal B (G \times H)$ is a seminormal monoid.
\end{enumerate}
\end{enumerate}
\end{theorem}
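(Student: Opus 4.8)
The plan is to reduce the whole statement to one structural fact about $\mathcal F(G\times H)$ together with two applications of Theorem \ref{3.6}. Let $\theta_1\colon\mathcal F(G\times H)\to\mathcal F(G)$ and $\theta_2\colon\mathcal F(G\times H)\to\mathcal F(H)$ be the monoid homomorphisms extending the coordinate projections $(g,h)\mapsto g$ and $(g,h)\mapsto h$. Since $H$ is abelian, the displayed formula for $\pi(S)$ preceding the theorem shows that the $H$-component of every element of $\pi(S)$ is the single element of $\pi(\theta_2(S))$, and hence yields the workhorse fact
\[
 S\in\mathcal B(G\times H)\quad\Longleftrightarrow\quad\theta_1(S)\in\mathcal B(G)\ \text{ and }\ \theta_2(S)\in\mathcal B(H)\,.
\]
As $\theta_1,\theta_2$ are homomorphisms, the assignment $[S]\mapsto\big([\theta_1(S)],[\theta_2(S)]\big)$ visibly respects juxtaposition, so it is a semigroup homomorphism as soon as it is well-defined. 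I would also record the trivial equivalences $1_G\sim_{\mathcal B(G)}1_{\mathcal F(G)}$ and $1_H\sim_{\mathcal B(H)}1_{\mathcal F(H)}$ (a single identity term changes no product), which let me discard padded identity terms at will.

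The core is the equivalence
\[
 S\sim_{\mathcal B(G\times H)}S'\quad\Longleftrightarrow\quad\theta_1(S)\sim_{\mathcal B(G)}\theta_1(S')\ \text{ and }\ \theta_2(S)\sim_{\mathcal B(H)}\theta_2(S')\,,
\]
which gives well-definedness (forward direction) and injectivity (backward direction) at once. The backward direction is immediate from the workhorse fact applied coordinatewise to $S\bdot T$ and $S'\bdot T$. For the forward direction, suppose $\theta_1(S)\bdot U\in\mathcal B(G)$ for some $U\in\mathcal F(G)$; I would lift $U$ to a test sequence $T\in\mathcal F(G\times H)$ by giving each term of $U$ second coordinate $1_H$ and appending one term $(1_G,h_0^{-1})$, where $h_0$ is the product of the $H$-components of $S$. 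Then $\theta_1(T)=U\bdot 1_G$ and $\theta_2(S)\bdot\theta_2(T)\in\mathcal B(H)$ (its terms multiply to $h_0\cdot h_0^{-1}=1_H$), so the workhorse fact gives $S\bdot T\in\mathcal B(G\times H)$; now $S\sim_{\mathcal B(G\times H)}S'$ forces $S'\bdot T\in\mathcal B(G\times H)$, and applying $\theta_1$ and discarding the identity term yields $\theta_1(S')\bdot U\in\mathcal B(G)$. Symmetry in $S,S'$ gives $\theta_1(S)\sim_{\mathcal B(G)}\theta_1(S')$, and the analogous lift — placing a test sequence $V\in\mathcal F(H)$ in the second coordinate and appending $(g_0^{-1},1_H)$ for some $g_0\in\pi(\theta_1(S))$ — handles the $H$-coordinate.

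Surjectivity (completing item 1) is then routine: given $[A],[B]$ with $A=a_1\bdot\ldots\bdot a_m\in\mathcal F(G)$ and $B=b_1\bdot\ldots\bdot b_r\in\mathcal F(H)$, the sequence
\[
 S=(a_1,1_H)\bdot\ldots\bdot(a_m,1_H)\bdot(1_G,b_1)\bdot\ldots\bdot(1_G,b_r)
\]
satisfies $\theta_1(S)=A\bdot 1_G^{[r]}\sim_{\mathcal B(G)}A$ and $\theta_2(S)=1_H^{[m]}\bdot B\sim_{\mathcal B(H)}B$, so it maps to $([A],[B])$. For item 2, the equivalences (a)$\Leftrightarrow$(b) and (c)$\Leftrightarrow$(d) are Theorem \ref{3.6} for $G$ and for $G\times H$. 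To connect (b) and (c) I would use item 1 to identify $\mathcal C\big(\mathcal B(G\times H),\mathcal F(G\times H)\big)$ with the full product $\mathcal C\big(\mathcal B(G),\mathcal F(G)\big)\times\mathcal C\big(\mathcal B(H),\mathcal F(H)\big)$; since $H$ is abelian the second factor is a group by Theorem \ref{3.1}, hence Clifford, and a finite commutative direct product is Clifford exactly when each factor is (idempotents and $\mathcal H$-classes of a product split coordinatewise, so regularity is tested componentwise). Thus the product is Clifford iff $\mathcal C\big(\mathcal B(G),\mathcal F(G)\big)$ is, giving (b)$\Leftrightarrow$(c) and hence all four equivalences.

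I expect the only genuine difficulty to be the forward direction of the core equivalence, namely arranging the lift $T$ so that the coupled condition $S\bdot T\in\mathcal B(G\times H)$ decouples cleanly into its $G$- and $H$-parts. The abelianness of $H$ is precisely what reduces the $H$-coordinate to a single cancellable group element and legitimizes the one-term correction $(1_G,h_0^{-1})$; everything else is bookkeeping with identity terms.
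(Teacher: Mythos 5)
Your proof is correct and takes essentially the same route as the paper: both arguments rest on the coordinatewise criterion $S\in\mathcal B(G\times H)\Leftrightarrow\theta_1(S)\in\mathcal B(G)\mbox{ and }\theta_2(S)\in\mathcal B(H)$ (coming from the displayed formula for $\pi(S)$), and both establish well-definedness, injectivity and surjectivity of the same map by lifting test sequences from the factors into $\mathcal F(G\times H)$ padded with identity terms — the paper zips a $G$-test sequence and an $H$-test sequence together, while you decouple the coordinates with a one-term correction $(1_G,h_0^{-1})$, which is only a cosmetic difference. Part 2 is handled identically in both: (a)$\Leftrightarrow$(b) and (c)$\Leftrightarrow$(d) from Theorem \ref{3.6}, and (b)$\Leftrightarrow$(c) from item 1 together with $\mathcal C\big(\mathcal B(H),\mathcal F(H)\big)\cong H$ (Theorem \ref{3.1}).
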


\begin{proof}
1. Suppose that $[(g_1, h_1)\bdot \ldots \bdot (g_{\ell}, h_{\ell})] = [(g'_1, h'_1)\bdot \ldots \bdot (g'_{k}, h'_{k})]$.
Let $T = t_1\bdot \ldots \bdot t_n \in \mathcal F (G) \und T' = t'_1\bdot \ldots \bdot t'_m \in \mathcal F (H)$ be sequences with
\[
  g_1\bdot \ldots \bdot g_{\ell}\bdot T \in \mathcal B (G) \quad \und \quad h_1\bdot \ldots \bdot h_{\ell} \bdot T' \in \mathcal B (H) \,.
\]
We can assume $n \ge m$. Accordingly, we consider the sequence $(t_1, t'_1)\bdot \ldots \bdot (t_m, t'_m)\bdot (t_{m+1}, 1_H)\bdot \ldots \bdot (t_n, 1_H) \in \mathcal F (G\times H)$.
Then we have
\[
  (g_1, h_1)\bdot \ldots \bdot (g_{\ell}, h_{\ell})\bdot (t_1, t'_1)\bdot \ldots \bdot (t_m, t'_m)\bdot (t_{m+1}, 1_H)\bdot \ldots \bdot (t_n, 1_H) \in \mathcal B (G\times H) \,.
\]
It follows that
\[
 (g'_1, h'_1)\bdot \ldots \bdot (g'_{k}, h'_{k}) \bdot (t_1, t'_1)\bdot \ldots \bdot (t_m, t'_m)\bdot (t_{m+1}, 1_H)\bdot \ldots \bdot (t_n, 1_H) \in \mathcal B (G\times H) \,,
\]
and hence $g'_1 \bdot \ldots \bdot g'_k \bdot T \in \mathcal B (G) \und h'_1 \bdot \ldots \bdot h'_k \bdot T' \in \mathcal B (H)$.
By symmetry, the given map is well-defined semigroup homomorphism.

To show surjectivity, let $\big([g_1 \bdot \ldots \bdot g_{\ell}], [h_1 \bdot \ldots \bdot h_k] \big) \in \mathcal C \big(\mathcal B (G), \mathcal F (G)\big) \times \mathcal C \big(\mathcal B (H), \mathcal F (H)\big)$.
If $\ell > k$, then the image of the sequence
\[
  (g_1, h_1)\bdot \ldots \bdot (g_k, h_k)\bdot (g_{k+1},1_H)\bdot \ldots \bdot (g_{\ell}, 1_H) \in \mathcal F (G \times H)
\]
is $\big([g_1 \bdot \ldots \bdot g_{\ell}], [h_1 \bdot \ldots \bdot h_k \bdot 1_H \bdot \ldots \bdot 1_H]\big) = \big([g_1 \bdot \ldots \bdot g_{\ell}], [h_1 \bdot \ldots \bdot h_k] \big)$ by \cite[Lemma 3.6.4]{Oh18a}.
If $\ell < k$, then the image of the sequence
\[
  (g_1, h_1)\bdot \ldots \bdot (g_{\ell}, h_{\ell})\bdot (1_G, h_{\ell +1})\bdot \ldots \bdot (1_G, h_k) \in \mathcal F (G \times H)
\]
is $\big([g_1 \bdot \ldots \bdot g_{\ell}\bdot 1_G \bdot \ldots \bdot 1_G], [h_1 \bdot \ldots \bdot h_k]\big) = \big([g_1 \bdot \ldots \bdot g_{\ell}], [h_1 \bdot \ldots \bdot h_k] \big)$ by \cite[Lemma 3.6.4]{Oh18a}.
It follows that the given map is surjective.

To show injectivity, let
\[
  (g_1, h_1)\bdot \ldots \bdot (g_{\ell}, h_{\ell}) \und (g'_1, h'_1)\bdot \ldots \bdot (g'_{k}, h'_{k}) \in \mathcal F (G\times H)
\]
such that $[g_1 \bdot \ldots \bdot g_{\ell}] = [g'_1 \bdot \ldots \bdot g'_k ] \und [h_1 \bdot \ldots \bdot h_{\ell}] = [h'_1 \bdot \ldots \bdot h'_k ]$.
If $(g_1, h_1)\bdot \ldots \bdot (g_{\ell}, h_{\ell})\bdot (t_1, t'_1)\bdot \ldots \bdot (t_n, t'_n) \in \mathcal B (G\times H)$, then we have that
\[
  g_1\bdot \ldots \bdot g_{\ell}\bdot t_1 \bdot \ldots \bdot t_n \in \mathcal B (G) \und h_1 \bdot \ldots \bdot h_{\ell}\bdot t'_1 \bdot \ldots \bdot t'_n \in \mathcal B (H) \,.
\]
It follows that
\[
  g'_1\bdot \ldots \bdot g'_{k}\bdot t_1 \bdot \ldots \bdot t_n \in \mathcal B (G) \und h'_1 \bdot \ldots \bdot h'_{k}\bdot t'_1 \bdot \ldots \bdot t'_n \in \mathcal B (H) \,,
\]
and thus $(g'_1, h'_1)\bdot \ldots \bdot (g'_k, h'_k)\bdot (t_1, t'_1)\bdot \ldots \bdot (t_n, t'_n) \in \mathcal B (G\times H)$.
By symmetry, we obtain that the given map is injective.

\smallskip
2. By Theorem \ref{3.1}, we have $\mathcal C \big(\mathcal B (H), \mathcal F (H)\big) \cong H$. Thus isomorphism from 1. shows that $\mathcal C \big(\mathcal B (G\times H), \mathcal F (G\times H)\big)$ is a Clifford semigroup if and only if $\mathcal C \big(\mathcal B (G), \mathcal F (G)\big)$ is a Clifford semigroup. The remaining equivalences follow from Theorem \ref{3.6}.
\end{proof}

\smallskip
Now we state the second part of our main result, namely $|G'| \le 2$ if and only if $\mathcal C \big(\mathcal B (G), \mathcal F (G)\big)$ is Clifford. Recall that if $G$ is a finite group with $|G'| \le 2$, then its class semigroup is a Clifford (see \cite[Theorem 3.10]{Oh18a}). To show the converse, we start with the following simple observation.

\smallskip
\begin{lemma} \label{3.10}~
Let $\mathcal C = \mathcal C \big(\mathcal B (G), \mathcal F (G)\big)$. If $\mathcal C$ is a Clifford semigroup, then for every $S \in \mathcal F (G)$, we have $|\pi(S)| \bigm| |G'|$, in particular, $|G'|$ is divided by $2$ if $G$ is non-abelian.
\end{lemma}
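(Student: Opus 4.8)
The plan is to reduce the divisibility assertion to the coset description of $\pi(S)$ provided by Proposition \ref{3.5}, and then to settle the parity statement by producing a single sequence whose set of products has exactly two elements. The case $|S| = 0$ is immediate, since then $\pi(S) = \{1_G\}$ and $1 \bigm| |G'|$, so I would assume $|S| \ge 1$ throughout. The starting point is that $\mathcal C$ is Clifford, hence every element is regular; in particular $[S]$ is regular, so by the characterization of regularity recorded before the statement (together with Lemma \ref{3.2}.1, and using that $\mathcal C$ is finite by Lemma \ref{2.2}.1) there is an idempotent $[E] \in \mathsf E(\mathcal C)$ with $[S] \in \mathcal H\big([E]\big)$.

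Next I would apply Proposition \ref{3.5} to the idempotent $E$: it yields a subgroup $G_0 \subset G$ with $\pi(E) = G_0'$ such that $\pi(T)$ is a $G_0'$-coset for every $T \in \mathcal F(G)$ with $[T] \in \mathcal H\big([E]\big)$. Taking $T = S$, the set $\pi(S)$ is a coset of $G_0'$ in $G$, and therefore $|\pi(S)| = |G_0'|$. The key observation is then purely group-theoretic: since $G_0$ is a subgroup of $G$, its commutator subgroup $G_0' = [G_0, G_0]$ is contained in $[G,G] = G'$, so $G_0'$ is a subgroup of $G'$. Lagrange's theorem gives $|G_0'| \bigm| |G'|$, and combining this with $|\pi(S)| = |G_0'|$ produces the desired relation $|\pi(S)| \bigm| |G'|$.

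For the final clause I would argue directly. If $G$ is non-abelian, choose $g, h \in G$ with $gh \ne hg$ and set $S = g \bdot h$. Then $\pi(S) = \{gh, hg\}$ has exactly two elements, so the first part yields $2 = |\pi(S)| \bigm| |G'|$, as claimed. The only genuine work lies in the passage from ``$\mathcal C$ is Clifford'' to the coset structure of $\pi(S)$: one must correctly translate regularity of $[S]$ into membership in the $\mathcal H$-class of an idempotent, so that Proposition \ref{3.5} becomes applicable; once the equality $|\pi(S)| = |G_0'|$ is in hand, the remaining divisibility and the parity consequence are routine. I therefore expect this identification step to be the main (and essentially only) obstacle.
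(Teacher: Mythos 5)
Your proposal is correct and follows essentially the same route as the paper: use the Clifford property to place $[S]$ in the $\mathcal H$-class of an idempotent, invoke Propositions \ref{3.3} and \ref{3.5} to see that $\pi(S)$ is a coset of a commutator subgroup $G_0'$ with $G_0 \subset G$ (so that $|\pi(S)| = |G_0'|$ divides $|G'|$ by Lagrange), and then take $S = g \bdot h$ with $gh \neq hg$ to get $2 \bigm| |G'|$ in the non-abelian case. The paper's proof is just a compressed version of exactly this argument.
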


\begin{proof}
Let $S \in \mathcal F (G)$ be a sequence. Since $\mathcal C$ is Clifford, there is a $S' \in \mathcal F (G)$ such that $[S'] \in \mathsf E (\mathcal C)$ and $[S] \in \mathcal H \big([S']\big)$. By Proposition \ref{3.3} and \ref{3.5}, we have that $|\pi(S)| = |\pi(S')|$ and it divides the order of $G'$. In particular, if $G$ is non-abelian, then there are $g,h \in G$ such that $gh \neq hg$ and thus $|\pi(g\bdot h)| = 2$.
\end{proof}

\smallskip
\begin{theorem} \label{3.11}~
Let $\mathcal C = \mathcal C \big(\mathcal B (G), \mathcal F (G)\big)$. If $\mathcal C$ is a Clifford semigroup, then $|G'| \le 2$.
\end{theorem}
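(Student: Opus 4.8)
The plan is to prove the contrapositive in a sharp form: if $\mathcal C$ is Clifford then for every sequence $S \in \mathcal F(G)$ one has the exact equality $|\pi(S)| = |\la \supp(S)\ra'|$, and then to exhibit, whenever $|G'| \ge 3$, a sequence whose product set has a size that cannot divide the order of a commutator subgroup, contradicting Lemma \ref{3.10}. The exact equality refines Proposition \ref{3.5}: regularity of $[S]$ gives an idempotent $[S']$ with $[S] \in \mathcal H([S'])$, so $\pi(S)$ is a full coset of $G_0' = \pi(S')$, where $G_0 = \la\supp(S')\ra \supseteq \la\supp(S)\ra$; since $\pi(S)$ also lies in a single coset of the smaller subgroup $\la\supp(S)\ra'$, comparing cardinalities forces $\la\supp(S)\ra' = G_0'$, and hence $|\pi(S)| = |\la\supp(S)\ra'|$. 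I will also use that the Clifford property passes to every subgroup $U \le G$: the inclusion $\mathcal B(U) \hookrightarrow \mathcal B(G)$ is a divisor homomorphism, so $\mathcal B(U)$ is seminormal and $\mathcal C(\mathcal B(U),\mathcal F(U))$ is Clifford by Theorem \ref{3.6}, which is the subgroup counterpart of the argument in Corollary \ref{3.8}.

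First I would treat two-term sequences. For non-commuting $g,h$ we have $\pi(g \bdot h) = \{gh, hg\}$, of size $2$, so the exact equality gives $|\la g,h\ra'| = 2$. Consequently, if some two elements generate a subgroup whose commutator subgroup has order $\ge 3$ — as for two transpositions in a symmetric group, or for suitable pairs in a dicyclic group — the statement is already proved. In the complementary case every commutator $[g,h]$ is an involution and $\la g,h\ra' = \{1_G, [g,h]\}$; in particular all two-generated subgroups have nilpotency class $\le 2$.

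The main step handles the remaining case $|G'| \ge 3$ by passing to a subgroup $U \le G$ of class $2$ whose commutator subgroup $U'$ is elementary abelian of order $2^r$ with $r \ge 2$. Inside such a $U$ the commutators are central involutions, so for $x_1, x_2, x_3 \in U$ and $S = x_1 \bdot x_2 \bdot x_3$ each transposition of adjacent terms multiplies the product by one of $a = [x_1,x_2]$, $b = [x_1,x_3]$, $c = [x_2,x_3]$; thus $\pi(S) = x_1x_2x_3 \cdot \{1_G, a, c, ab, bc, abc\}$. Viewing the commutator map as an alternating pairing from $U/U'$ onto $U'$, whose image spans a space of dimension $\ge 2$, a short linear-algebra argument produces a triple with either $a,b,c$ independent, giving $|\pi(S)| = 6$, or with $a,c$ independent and $b = ac$, giving $\pi(S) = x_1x_2x_3\cdot\{1_G,a,c\}$ and $|\pi(S)| = 3$. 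Applying Lemma \ref{3.10} inside $U$ — legitimate since $\mathcal C(\mathcal B(U),\mathcal F(U))$ is Clifford — we would need $|\pi(S)|$ to divide $|U'| = 2^r$; but neither $3$ nor $6$ is a power of $2$, the desired contradiction.

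The step I expect to be the main obstacle is the existence of the subgroup $U$, i.e. locating a class-$2$ subgroup of $G$ whose commutator subgroup is elementary abelian of rank $\ge 2$. The naive attempt of passing to the quotient $\bar G = G/\gamma_3(G)(G')^2$ does make $\bar G'$ central of exponent $2$ and keeps the Clifford property by Corollary \ref{3.8}, but it can collapse the rank to $1$ (the coinvariants of the conjugation action of $G$ on $G'/\Phi(G')$ may be one-dimensional), so it need not retain the obstruction. The genuine work is thus group-theoretic: from the facts that $G'$ is generated by the commutators, all of which are involutions, and that $|G'| \ge 3$, one must extract two commutators that are independent central involutions of a common class-$2$ subgroup. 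Carrying this out — essentially showing that a non-cyclic commutator structure assembled from involutions always yields such a $U$ — is where the core difficulty of the theorem lies; once $U$ is in hand, the combinatorial count above closes the argument.
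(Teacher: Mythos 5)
Your proposal, as written, contains a genuine gap, and you name it yourself: the existence of a class-$2$ subgroup $U \le G$ whose commutator subgroup is elementary abelian of rank at least $2$. Everything surrounding that step is sound: the exact equality $|\pi(S)| = |\la \supp(S)\ra'|$ is a correct refinement of Proposition \ref{3.5} (your cardinality comparison between the full $G_0'$-coset and the ambient $\la\supp(S)\ra'$-coset, with $\la\supp(S)\ra \subset G_0$ coming from Lemma \ref{3.4}, does work); the descent of the Clifford property to subgroups via the divisor homomorphism $\mathcal B(U) \hookrightarrow \mathcal B(G)$ and Theorem \ref{3.6} is legitimate; the computation $\pi(x_1\bdot x_2\bdot x_3) = x_1x_2x_3\{1_G, a, c, ab, bc, abc\}$ in a class-$2$ group is correct; and sizes $3$ and $6$ indeed contradict Lemma \ref{3.10} applied in $U$. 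But a proof whose final sentence is that the key step ``is where the core difficulty of the theorem lies'' is not a proof: the bridge from ``$|G'| \ge 3$ and all commutators are involutions'' to the existence of $U$ is exactly what the paper spends its claims {\bf A2}--{\bf A5} on. (A secondary soft spot: the ``short linear-algebra argument'' producing a triple with $|\pi(S)| \in \{3,6\}$ is asserted, not given; it is true, but requires a small case analysis.)

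The irony is that your own main tool closes the gap immediately, and in a stronger form than you anticipate: under the Clifford hypothesis $G$ itself has class $2$, so one may take $U = G$ and no subgroup extraction or quotient is needed. Apply the exact equality to $g\bdot g\bdot h$ with $gh \ne hg$: then $|\pi(g\bdot g\bdot h)| = |\la g,h\ra'| = |\pi(g\bdot h)| = 2$, while $\pi(g\bdot g\bdot h) = \{g^2h,\ ghg,\ hg^2\}$ and $g^2h = ghg$ or $ghg = hg^2$ would each force $gh = hg$; hence $g^2h = hg^2$, i.e.\ $g^2 \in \mathsf Z(G)$ for every $g \in G$ (the paper's {\bf A2}). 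Consequently $H = \la g^2 \mid g \in G \ra \subset \mathsf Z(G)$ and $G/H$ has exponent $2$, hence is abelian, so $G' \subset H \subset \mathsf Z(G)$ (the paper's {\bf A4}); since your two-term analysis shows $G'$ is generated by involutions and it is abelian (being central), $G'$ is elementary abelian, and $|G'|\ge 3$ forces rank $\ge 2$. Your worry about rank collapsing in a quotient is therefore moot. With $U = G$ in hand, your endgame does diverge genuinely from the paper: the paper continues by reducing (nilpotency, Sylow decomposition via Theorem \ref{3.9}, minimality, claim {\bf A5}) to a $2$-group covered by the Berkovich--Janko classification and finishes with explicit case distinctions, whereas your alternating-pairing argument, once written out (find $u$ with two partners $v,w$ whose commutators $a = \beta(u,v)$, $b = \beta(u,w)$ are independent --- if no such $u$ exists, all values $\beta(x,\cdot)$ lie on lines and $uu'$ built from two independent pairs provides one --- then pass to the triple $(u,v,w)$, $(u,vu,wu)$, $(u,vw,w)$ or $(u,v,wv)$ according to the value of $\beta(v,w)$), avoids the classification theorem entirely. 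So the route is salvageable and even attractive, but as submitted the proof is incomplete at its central step.
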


\begin{proof}
Assume to the contrary that $|G'| \ge 3$.

\smallskip
\begin{itemize}
\item[{\bf A1.}] For every sequence $S \in \mathcal F (G)$, we may suppose that
    \[
      |\pi(S)| \,\, \in \,\, \big\{ 1, \,\, 2, \,\, |G'| \big\} \,.
    \]
\end{itemize}

\begin{proof}[Proof of {\bf A1.}]
Since $G$ is non-abelian, we first observe that the existence of a sequence $S$ satisfying the statement follow by Lemma \ref{3.10} and \cite[Lemma 3.6.5 and 3.7.3]{Oh18a}.
If there is a sequence $S \in \mathcal F (G)$ with $2 < |\pi(S)| < |G'|$, then, since $\mathcal C$ is Clifford, it follows that there is a $S' \in \mathcal F (G)$ such that $[S'] \in \mathsf E (\mathcal C)$ satisfying the condition given in Lemma \ref{3.4}, and hence $|G'_0| = |\pi(S')| = |\pi(S)|$ by Proposition \ref{3.5}, where $G_0 = \la \supp(S') \ra$ is a subgroup of $G$. 
Since $\mathcal B (G)$ is seminormal by Theorem \ref{3.6} and the inclusion $\mathcal B (G_0) \hookrightarrow \mathcal B (G)$ is a divisor homomorphism (see, \cite[Lemma 3.3]{Oh18a}), $\mathcal B (G_0)$ is also seminormal by \cite[Lemma 3.2.4]{Ge-Ka-Re15a}. It follows that $\mathcal C \big(\mathcal B (G_0), \mathcal F (G_0)\big)$ is Clifford by Theorem \ref{3.6} with $|G'_0| = |\pi(S)| \ge 3$, and thus we can replace $G$ with $G_0$. Since $G$ is finite, we obtain the assertion in finite step of this process.
\end{proof}

\smallskip
\begin{itemize}
\item[{\bf A2.}] For every $g \in G$, we have $g^{2} \in \mathsf Z (G)$.
\end{itemize}

\begin{proof}[Proof of {\bf A2.}]
Let $h \in G$. If $gh = hg$, then $g^{2}h = hg^{2}$. Suppose that $gh \neq hg$, We need to show that $g^{2}h = hg^{2}$. Assume to the contrary that $g^{2}h \neq hg^{2}$. Then
\[
  \pi(g \bdot g \bdot h) = \{ ggh, \,\, ghg, \,\, hgg \} \,, \,\, \mbox{ whence } \, |\pi(g \bdot g \bdot h)| = 3 \,.
\]
Then {\bf A1} implies $|G'| = 3$, a contradiction to Lemma \ref{3.10}. Thus $|\pi(g \bdot g \bdot h)| = 2$ and $g^{2}h = hg^{2}$.
\end{proof}

\smallskip
\begin{itemize}
\item[{\bf A3.}] For every $g, h \in G$ with $gh \neq hg$, its commutator $[g,h]$ has order $2$. In particular, we have that
                 \[
                   [g,h] = [g^{-1},h^{-1}] = [h,g] = [h^{-1},g^{-1}] = [h,g^{-1}] = [h^{-1},g] = [g^{-1},h] = [g,h^{-1}] \,.
                 \]
\end{itemize}

\begin{proof}[Proof of {\bf A3.}]
Let $g, h \in G$ with $gh \neq hg$ and let $S = g \bdot h \bdot g^{-1}h^{-1} \in \mathcal F (G)$. Note that, by {\bf A2},
\[
  \begin{aligned}
    hg = hg \quad & \Leftrightarrow \quad hg^{2}g^{-1} = h^{-1}h^{2}g \\
                  & \Leftrightarrow \quad g^{2}hg^{-1} = h^{-1}gh^{2} \\
                  & \Leftrightarrow \quad ghg^{-1}h^{-1} = g^{-1}h^{-1}gh \,,
  \end{aligned}
\]
and it follows that $\pi(S) = \{ ghg^{-1}h^{1}, \,\, hg^{-1}h^{-1}g, \,\, 1_G \}$.
Since $h^{-1}g^{2} = g^{2}h^{-1}$ by {\bf A2}, it follows that $hg^{-1}h^{-1}g = hgh^{-1}g^{-1}$. Thus, by {\bf A1},
\[
  \pi(S) = \{ ghg^{-1}h^{-1}, \,\, hgh^{-1}g^{-1}, \,\, 1_G \} \,, \mbox{ whence } [g,h] \, = \, [g,h]^{-1} \, = \, [h,g] \,.
\]
Moreover, we obtain that $[g^{-1},h^{-1}] = [g,h] = [h,g] = [h,g^{-1}]$. By swiping the role between $g$ and $h$, we have that
\[
  [h^{-1},g^{-1}] \, = \, [h,g] \, = \, [g,h] \, = \, [g,h^{-1}] \,.
\]
Since $g^{2}h = hg^{2}$ and $h^{2}g = gh^{2}$ by {\bf A2}, we also obtain that
\[
  [h^{-1},g] \, = \, [h^{-1}, g^{-1}] \und [g^{-1},h] \, = \, [g^{-1},h^{-1}] \,.
\]
Therefore, $[g,h] = [g^{-1},h^{-1}] = [h,g] = [h^{-1},g^{-1}] = [h,g^{-1}] = [h^{-1},g] = [g^{-1},h] = [g,h^{-1}]$.
\end{proof}

\smallskip
\begin{itemize}
\item[{\bf A4.}] $G$ is finite group of nilpotency class $2$.
\end{itemize}

\begin{proof}[Proof of {\bf A4.}]
It is well known that a group $G$ has nilpotency class $2$ if and only if $G' \subset \mathsf Z (G)$ (see \cite[Theorem 6.1.8]{Du-Fo04}). Hence it is sufficient to show $G' \subset \mathsf Z (G)$. 
Let $H = \la g^{2} \t g \in G \ra$ be a subgroup of $G$. Then $H \subset \mathsf Z (G)$ by {\bf A2}, and hence $H$ is normal subgroup of $G$. Since every elements in $G/H$ has order at most $2$, $G/H$ is an abelian group. It follows that $G' \subset H \subset \mathsf Z (G)$.
\end{proof}

\smallskip
\begin{itemize}
\item[{\bf A5.}] There is a $2$-subgroup $\overline{G} \subset G$ such that $\mathcal C \big(\mathcal B ( \overline{G} ), \mathcal F ( \overline{G} )\big)$ is Clifford, $\overline{G} \,'$ is not cyclic with $|\overline{G} \,'| \ge 3$, and each proper subgroup $H$ of $\overline{G}$ has a cyclic commutator subgroup $H'$.
\end{itemize}

\begin{proof}[Proof of {\bf A5.}]
Since $G$ is finite nilpotent group by {\bf A4}, $G$ is a direct product of its Sylow subgroups (see \cite[Theorem 6.1.3]{Du-Fo04}). For a prime number $p$, we denote by $Syl_p (G)$ a Sylow $p$-subgroup of $G$. Then Lemma \ref{3.10} implies that $Syl_p (G)$ is abelian for every odd prime $p$, and it follows that $|Syl_2 (G)'| = |G'| \ge 3$ because the commutator subgroup of direct product is the direct product of each commutator subgroups. Then Theorem \ref{3.9}.2 implies that we can replace $G$ with $G_0 = Syl_2 (G)$ so that $\mathcal C \big(\mathcal B (G_0), \mathcal F (G_0)\big)$ is Clifford and $G_0$ satisfies {\bf A1} - {\bf A4}.
Let $\Omega = \{ \overline{G} \, \t \, \overline{G} \subset G_0 \, \mbox{ is a subgroup with } \, \overline{G} \,' = G'_0 \}$. Since $G_0$ is finite, there is a minimal element $\overline{G}$ in $\Omega$. By replacing $G_0$ with $\overline{G}$, we can assume that $\overline{G}$ is a $2$-group with $|\overline{G} \,'| \ge 3$ such that $\mathcal C \big(\mathcal B ( \overline{G} ), \mathcal F ( \overline{G} )\big)$ is Clifford and $\overline{G}$ satisfies {\bf A1} - {\bf A4}. Since $\overline{G} \,'$ is abelian group generated by order $2$ elements by {\bf A3} and $|\overline{G} \,'| \ge 3$, we have that $\overline{G} \,'$ is not cyclic. By the minimality of $\overline{G}$, we obtain that, for each proper subgroup $H$, $H'$ is cyclic (indeed, {\bf A1} implies $|H'| \le 2$).
\end{proof}

\smallskip
For simplicity of notation, we set $G = \overline{G}$ and suppose that $G$ has all properties listed in {\bf A5}. Then \cite[Theorem 139.A]{Be-Ja11a} implies that $G = \la g_1, g_2, g_3 \ra$, and $G' \cong C_2 \oplus C_2 \oplus C_2$ or $C_2 \oplus C_2$.
Note that $G' = \big\la [g_1,g_2], [g_1,g_3], [g_2,g_3] \big\ra$, and if $x,y,z \in G$ are any elements, then since $G' \subset \mathsf Z (G)$, we obtain that
\begin{equation} \label{comm}
[x,y][x,z] \, = \, (xyx^{-1}y^{-1})[x,z] \, = \, xyx^{-1}[x,z]y^{-1} \, = \, x(yz)x^{-1}(yz)^{-1} \, = \, [x,yz] \,.
\end{equation}

\bigskip
\noindent CASE 1{\rm \,:} $G' \cong C_2 \oplus C_2 \oplus C_2$.

Then $[g_i, g_j]$ are non-trivial distinct elements of order $2$ for all $i,j \in [1,3]$ with $i \neq j$. We assert that the elements in the set
\[
  \{ g_2g_1g_3, \,\, g_2g_3g_1, \,\, g_3g_2g_1 \}
\]
are all distinct. Since $[g_1,g_3]$ and $[g_2,g_3]$ are non-trivial, it follows that
\[
  g_2g_1g_3 \neq g_2g_3g_1, \und g_2g_3g_1 \neq g_3g_2g_1 \,.
\]
If $g_2g_1g_3 = g_3g_2g_1$, then it follows by (\ref{comm}) that
\[
  1_G \, = \, (g_2g_1)g_3(g^{-1}_1g^{-1}_2)g^{-1}_3 \, = \, [g_2g_1, g_3] \, = \, [g_2,g_3][g_1,g_3] \,,
\]
and thus $[g_2,g_3] = [g_1,g_3]$ by {\bf A3}, a contradiction. Hence we have $g_2g_1g_3 \neq g_3g_2g_1$, and it follows that $3 \le |\pi(g_1 \bdot g_2 \bdot g_3)| \le 6$, a contradiction to {\bf A1}.

\bigskip
\noindent
CASE 2{\rm \,:} $G' \cong C_2 \oplus C_2$.

Without loss of generality, we can assume that $[g_1,g_2]$, $[g_1,g_3]$ are non-trivial distinct elements of order $2$. Then we assert that there is a sequence $S \in \mathcal F (G)$ with $|\pi(S)| = 3$, a contradiction to {\bf A1}.

\medskip
\noindent
CASE 2.1{\rm \,:} $[g_2,g_3] = 1_G$.

Let $S = g_1 \bdot g_2g_1 \bdot g_3g_1 \in \mathcal F (G)$. Then $g_2g_1 \neq 1_G$ and $g_3g_1 \neq 1_G$, otherwise $[g_1,g_2] = [g_1,g_3] = 1_G$.
Since $[g_1,g_2]$ is non-trivial, it follows that $g_1(g_2g_1)(g_3g_1) \neq (g_2g_1)g_1(g_3g_1)$.

If $g_1(g_2g_1)(g_3g_1) = g_1(g_3g_1)(g_2g_1)$, then we have $g_2g_1g_3 = g_3g_1g_2$, and since $[g_2,g_3] = 1_G$, it follows that
\[
  1_G \, = \, (g_2g_1g_3)(g^{-1}_2g^{-1}_1g^{-1}_3) \, = \, g_2g_1(g^{-1}_2g_3)g^{-1}_1g^{-1}_3 \, =  \,[g_2,g_1][g_1,g_3] \,,
\]
whence $[g_1,g_3] = [g_2,g_1] = [g_1,g_2]$ by {\bf A3}, a contradiction. Thus $g_1(g_2g_1)(g_3g_1) \neq g_1(g_3g_1)(g_2g_1)$.

If $g_1(g_3g_1)(g_2g_1) = (g_2g_1)g_1(g_3g_1)$, then we have $g_3g_1g_2 = g_1g_2g_3$ because $g^{2}_1 \in \mathsf Z (G)$ by {\bf A2}, and since $[g_2,g_3] = 1_G$, it follows that $g_3g_1 = g_1g_3$, a contradiction. Thus $g_1(g_3g_1)(g_2g_1) \neq (g_2g_1)g_1(g_3g_1)$.

Since $[g_2,g_3] = 1_G$, by (\ref{comm}),
\[
  \begin{aligned}
  \, [g_1,g_3] \, = \, [g_3,g_1] \quad & \Leftrightarrow \quad [g_1,g_3] \, = \, [g_3,g_1][g_3,g_2] \, = \, [g_3, g_1g_2] \\
                                       & \Leftrightarrow \quad g_1g_2[g_1,g_3] \, = \, [g_1,g_3]g_1g_2 \, = \, g_3(g_1g_2)g^{-1}_3 \\
                                       & \Leftrightarrow \quad g_1g_2(g_1g_3g^{-1}_1g^{-1}_3) \, = \, g_3g_1g_2g^{-1}_3 \\
                                       & \Leftrightarrow \quad g_1g_2g_1g_3 \, = \, g_3g_1g_2g_1 \\
                                       & \Leftrightarrow \quad g_1(g_2g_1)(g_3g_1) \, = \, (g_3g_1)(g_2g_1)g_1 \,,
  \end{aligned}
\]
and
\[
  \begin{aligned}
  \, [g_1,g_2] \, = \, [g_2,g_1] \quad & \Leftrightarrow \quad [g_3g_1,g_2] \, = \, [g_3,g_2][g_1,g_2] \, = \, [g_2,g_1] \quad  \quad \,\\
                                       & \Leftrightarrow \quad g_1[g_3g_1,g_2] \, = \, [g_3g_1,g_2]g_1 = g_2g_1g^{-1}_2 \\
                                       & \Leftrightarrow \quad g_1(g_3g_1)g_2(g^{-1}_1g^{-1}_3)g^{-1}_2 \, = \, g_2g_1g^{-1}_2 \\
                                       & \Leftrightarrow \quad g_1(g_3g_1)(g_2g_1) \, = \, (g_2g_1)(g_3g_1)g_1 \,.
  \end{aligned}
\]
Since $g_2g_3 = g_3g_2$ and $g^{2}_1 \in \mathsf Z (G)$ by {\bf A2}, it follows that $(g_2g_1)g_1(g_3g_1) = (g_3g_1)g_1(g_2g_1)$.
Thus we obtain
\[
  \pi(S) = \{ g_1(g_2g_1)(g_3g_1), \,\, g_1(g_3g_1)(g_2g_1), \,\, (g_2g_1)g_1(g_3g_1) \} \,.
\]

\medskip
\noindent
CASE 2.2{\rm \,:} $[g_2,g_3] = [g_1,g_2]$.

By changing a generator of $G$, we can assume $G = \la g_1, g_2, x \ra$, where $x = g_3g^{-1}_1$. Then, by (\ref{comm}),
\[
  [g_1,x] \, = \, [g_1,g_3g^{-1}_1] \, = \, [g_1, g_3][g_1,g ^{-1}_1] \, = \, [g_1,g_3] \,,
\]
and by {\bf A3}, we also have that
\[
  [g_2,x] \, = \, [g_2,g_3g^{-1}_1] \, = \, [g_2, g_3][g_2,g^{-1}_1] \, = \, [g_1,g_2][g_1,g_2] \, = \, 1_G \,.
\]
Thus CASE 2.1 implies that $S = g_1 \bdot g_2g_1 \bdot xg_1 \in \mathcal F (G)$ is the desired sequence.

\medskip
\noindent
CASE 2.3{\rm \,:} $[g_2,g_3] = [g_1,g_3]$.

Similarly, we can assume $G = \la g_1, x, g_3 \ra$ ,where $x = g_2g^{-1}_1$. Then, by (\ref{comm}),
\[
  [g_1,x] \, = \, [g_1,g_2g^{-1}_1] \, = \, [g_1,g_2][g_1,g^{-1}_1] \, = \, [g_1,g_2] \,,
\]
and by {\bf A3}, we also have that
\[
  [x,g_3] \, = \, [g_2g^{-1}_1,g_3] \, = \, [g_2,g_3][g^{-1}_1,g_3] \, = \, [g_1,g_3][g_1,g_3] \, = \, 1_G \,.
\]
Thus CASE 2.1 implies that $S = g_1 \bdot xg_1 \bdot g_3g_1 \in \mathcal F (G)$ is the desired sequence.

\medskip
\noindent
CASE 2.4{\rm \,:} $[g_2,g_3] = [g_1,g_2g_3]$.

Similarly, we can assume $G = \la g_1, x, y \ra$, where $x = g_2g^{-1}_1$, $y = g_3g^{-1}_1$. Then, by (\ref{comm}),
\[
  \begin{aligned}
  \, [g_1,x]  \, & = \, [g_1,g_2g^{-1}_1] \, = \, [g_1,g_2][g_1,g^{-1}_1] \, = \, [g_1,g_2] \und \\
  \, [g_1,y]  \, & = \, [g_1,g_3g^{-1}_1] \, = \, [g_1,g_3][g_1,g^{-1}_1] \, = \, [g_1,g_3] \,,
  \end{aligned}
\]
and by {\bf A3}, we also have that
\[
  [x,y] \, = \, [x,g_3g^{-1}_1] \, = \, [x,g_3][x,g^{-1}_1] \, = \, [g_2g^{-1}_1,g_3][g_2g^{-1}_1,g_1] \, = \, [g_2,g_3][g^{-1}_1,g_3][g_2,g_1] \, = \, 1_G \,.
\]
Thus CASE 2.1 implies that $S = g_1 \bdot xg_1 \bdot yg_1 \in \mathcal F (G)$ is the desired sequence.
\end{proof}

\smallskip
Now we state our main result, which is the generalization of Theorem \ref{3.1}, to sum up Theorem \ref{3.6} and \ref{3.11}.

\smallskip
\begin{corollary} \label{3.12}~
The following statements are equivalent{\rm \,:}
\begin{enumerate}
\item[(a)] $|G'| \le 2$.

\smallskip
\item[(b)] $\mathcal B (G)$ is a seminormal monoid.

\smallskip
\item[(c)] $\mathcal C = \mathcal C \big(\mathcal B (G), \mathcal F (G)\big)$ is a Clifford semigroup.
\end{enumerate}
If this is the case, then $\mathcal B (G) = \big\{ S \in \mathcal F (G) \t [S] \in \mathsf E (\mathcal C) \big\}$.
\end{corollary}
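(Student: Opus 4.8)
The plan is to assemble the three equivalences from results already in hand and then to establish the final set-theoretic identity directly. For the equivalences, note that \textbf{(b)} $\Leftrightarrow$ \textbf{(c)} is precisely Theorem \ref{3.6}, that \textbf{(c)} $\Rightarrow$ \textbf{(a)} is Theorem \ref{3.11}, and that \textbf{(a)} $\Rightarrow$ \textbf{(c)} is the implication recalled above from \cite[Theorem 3.10]{Oh18a}. Together these close the cycle \textbf{(a)} $\Rightarrow$ \textbf{(c)} $\Rightarrow$ \textbf{(a)} alongside \textbf{(c)} $\Leftrightarrow$ \textbf{(b)}, so all three statements are equivalent. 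It then remains to prove, under these hypotheses, that $\mathcal B (G) = \{ S \in \mathcal F (G) \mid [S] \in \mathsf E (\mathcal C) \}$.

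For the inclusion ``$\supseteq$'' I would argue in full generality, without using $|G'| \le 2$. If $[S] \in \mathsf E (\mathcal C)$, then Proposition \ref{3.3} gives $\pi(S) = G_0'$, where $G_0 = \la \supp(S) \ra$. Since the identity $1_G$ lies in every commutator subgroup, we get $1_G \in \pi(S)$, that is, $S \in \mathcal B (G)$.

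The inclusion ``$\subseteq$'' is the heart of the matter, and here I would use the hypothesis $|G'| \le 2$ through the observation that then $G' \subseteq \mathsf Z (G)$ (equivalently, $G$ has nilpotency class at most $2$; the case $|G'| = 2$ holds because $\mathrm{Aut}(C_2)$ is trivial, so conjugation acts trivially on $G'$). With $G'$ central, reordering the terms of a sequence $W$ alters its product only by a central commutator, so $\pi(W)$ is a coset of the subgroup $K_W = \la [u,v] \mid u,v \in \supp(W) \ra \subseteq G'$, and $K_W$ depends only on $\supp(W)$. Now fix $S \in \mathcal B (G)$ and let $T \in \mathcal F (G)$ be arbitrary. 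Choosing base products $c_S, c_T$ for $S, T$, the sequences $S \bdot T$ and $S^{[2]} \bdot T$ have base products $c_S c_T$ and $c_S^2 c_T$ and, since $\supp(S^{[2]} \bdot T) = \supp(S \bdot T)$, the same associated subgroup $K := K_{S \bdot T} = K_{S^{[2]} \bdot T}$. From $1_G \in \pi(S) = c_S K_S$ with $K_S \subseteq K$ we obtain $c_S \in K_S \subseteq K$; as $c_S$ is central, the two base products lie in a common $K$-coset and
\[
  \pi(S \bdot T) \; = \; (c_S c_T) K \; = \; (c_S^2 c_T) K \; = \; \pi(S^{[2]} \bdot T) \,.
\]
In particular $1_G \in \pi(S \bdot T)$ if and only if $1_G \in \pi(S^{[2]} \bdot T)$; since $T$ was arbitrary, this means $S \sim S^{[2]}$, so $[S] = [S] + [S]$ is idempotent and $S$ lies in the right-hand set.

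I expect the main obstacle to be precisely the identity $\pi(S \bdot T) = \pi(S^{[2]} \bdot T)$, that is, controlling the coset structure of the product sets; everything else is assembly. It is tempting to derive ``$\subseteq$'' instead from the Clifford machinery by showing that each homomorphism $\varphi_{[S_i]}$ of Proposition \ref{3.5} is injective. However, $\ker(\varphi_{[S_i]})$ consists exactly of the classes in $\mathcal H \big([S_i]\big)$ represented by product-one sequences, so injectivity is logically equivalent to the inclusion we are trying to prove and cannot be invoked without circularity. This is why I would run the direct centrality argument above, which relies only on $G' \subseteq \mathsf Z (G)$.
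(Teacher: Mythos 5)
Your handling of the three equivalences is exactly the paper's: (b) $\Leftrightarrow$ (c) from Theorem \ref{3.6}, (c) $\Rightarrow$ (a) from Theorem \ref{3.11}, and (a) $\Rightarrow$ (c) from \cite[Theorem 3.10]{Oh18a}. For the closing identity $\mathcal B (G) = \big\{ S \in \mathcal F (G) \t [S] \in \mathsf E (\mathcal C) \big\}$ the paper simply combines \cite[Theorem 3.10.2]{Oh18a} with item 4 of Theorem \ref{3.6}, whereas you attempt a direct combinatorial proof; your inclusion ``$\supseteq$'' is correct (and indeed holds for every finite group), and your remark that deducing ``$\subseteq$'' from injectivity of the maps $\varphi_{[S_i]}$ would be circular is well taken.

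The direct argument for ``$\subseteq$'', however, has a genuine gap. You claim that centrality of $G'$ already forces $\pi(W)$ to be a \emph{full} coset of $K_W = \la [u,v] \t u,v \in \supp(W) \ra$. Centrality only yields the containment $\pi(W) \subseteq c_W K_W$ (every reordering changes the base product $c_W$ by an element of $K_W$); it does not yield equality, and equality is precisely what your chain $\pi(S \bdot T) = (c_S c_T) K = (c_S^2 c_T) K = \pi(S^{[2]} \bdot T)$ uses. The claim is false for general groups of nilpotency class $2$: in the Heisenberg group of order $p^3$ with $p \ge 5$, where $z = [x,y]$ is central of order $p$, the sequence $W = x \bdot y$ has $\pi(W) = \{ xy, \, xyz^{-1} \}$, while $c_W K_W = xy \la z \ra$ has $p$ elements. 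The failure is not cosmetic: in that group, for $S = x \bdot x^{-1} \in \mathcal B (G)$ and $T = y \bdot (z^{2}y^{-1})$, one checks that $S^{[2]} \bdot T \in \mathcal B (G)$ but $S \bdot T \notin \mathcal B (G)$ (realizing $1_G$ in $\pi(S \bdot T)$ would force an ordering in which $y$ precedes $x$, $x$ precedes $z^{2}y^{-1}$, $z^{2}y^{-1}$ precedes $x^{-1}$, and $x^{-1}$ precedes $y$, which is cyclic); hence $[S]$ is not idempotent and the conclusion itself fails. So your closing assertion that the argument ``relies only on $G' \subseteq \mathsf Z (G)$'' cannot stand.

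The gap closes easily once the hypothesis $|G'| \le 2$ is used beyond mere centrality. In that case $K_W \subseteq G'$ has order at most $2$. If $K_W$ is trivial, all elements of $\supp(W)$ commute and $\pi(W) = \{ c_W \} = c_W K_W$. If $K_W = G' = \{ 1_G, z \}$, pick $u, v \in \supp(W)$ with $[u,v] = z$; comparing two orderings of $W$ that place $u$ and $v$ adjacently and differ only by their transposition shows that some $c$ and $zc$ both lie in $\pi(W)$, and since $\pi(W) \subseteq c_W K_W$ with $|c_W K_W| = 2$, we get $\pi(W) = c_W K_W$. With this one observation inserted, your argument is complete, and it yields a self-contained alternative to the paper's citation-based proof of the final identity.
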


\begin{proof}
(b) $\Leftrightarrow$ (c) follows by Theorem \ref{3.6}, and (a) $\Leftrightarrow$ (c) follows by \cite[Theorem 3.10.2]{Oh18a} and Theorem \ref{3.11}. If this is the case, then \cite[Theorem 3.10.2]{Oh18a} and item 4 in Theorem \ref{3.6} imply the assertion.
\end{proof}

\smallskip
\begin{remark} \label{3.13}~
\begin{enumerate}
\item Suppose that $G_1$, $G_2$ are finite groups with $|G'_1| = |G'_2| = 2$. Then, for each $i \in [1,2]$, we have that $\mathcal C \big(\mathcal B (G_i), \mathcal F (G_i)\big)$ is Clifford by Corollary \ref{3.12}. However, since $(G_1 \times G_2)' = G'_1 \times G'_2$, Corollary \ref{3.12} implies that $\mathcal C \big(\mathcal B (G_1 \times G_2), \mathcal F (G_1 \times G_2)\big)$ is not Clifford (cf. Theorem \ref{3.9}).

\smallskip
\item In general, the converse of Corollary \ref{3.8}.2 does not hold, for example let $G = G_1 \times G_2$ and $N = G'$ where each $G_i$ is a finite group with $|G'_i| = 2$.
\end{enumerate}
\end{remark}

\smallskip
\begin{lemma} \label{3.14}~
Suppose that $|G'| = 2$. Let $g, h \in G$ be the elements with $n = \ord (g)$ and $m = \ord (h)$.
\begin{enumerate}
\item If $n$ is odd, then $g \in \mathsf Z (G)$.

\smallskip
\item We have $g \bdot g^{-1} \sim g^{[n]}$.

\smallskip
\item If $gh = hg$, then $\big[ g^{[n]} \big] \und \big[ h^{[m]} \big]$ are comparable with respect to the Rees order $\le$, and the following conditions are equivalent{\rm \,:}
\begin{enumerate}
\item[(a)] $g^{[n]} \sim h^{[m]}$.

\smallskip
\item[(b)] For any $x \in G$, we have that $xg \neq gx$ if and only if $xh \neq hx$.

\smallskip
\item[(c)] $gh \in \mathsf Z (G)$.
\end{enumerate}
\end{enumerate}
\end{lemma}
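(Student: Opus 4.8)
The plan is to use throughout that $|G'|=2$ forces $G'=\{1_G,z\}$ with $z$ the unique involution of $G'$; since $G'\triangleleft G$, every conjugate of $z$ lies in $G'$ and has order $2$, hence equals $z$, so $G'\subseteq \mathsf Z(G)$ and $G$ is nilpotent of class $\le 2$. Consequently the commutator is bilinear, $[xy,w]=[x,w][y,w]$ and $[x,yw]=[x,y][x,w]$, so each map $\varphi_g\colon G\to G'$, $x\mapsto[x,g]$, is a homomorphism onto a subgroup of $G'\cong C_2$; thus $C_G(g)=\Ker\varphi_g$ has index $1$ or $2$. I also record the elementary description of products in a class-$2$ group: for $S=s_1\bdot\ldots\bdot s_k$ with $P=s_1\cdots s_k$, any reordering multiplies $P$ by a product of commutators $[s_i,s_j]\in\{1_G,z\}$, so $\pi(S)=\{P\}$ when the terms of $S$ commute pairwise and $\pi(S)=\{P,Pz\}$ otherwise. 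In particular $g^{[n]},h^{[m]}\in\mathcal B(G)$ have product set $\{1_G\}$, and their classes are idempotent (e.g.\ $g^{[2n]}\sim g^{[n]}$ by the absorption comparison used in part 2 below).

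For \textbf{1.} I would argue directly: for every $x\in G$, bilinearity gives $[g,x]^n=[g^n,x]=[1_G,x]=1_G$, and since $[g,x]$ has order dividing $2$ while $n$ is odd, this forces $[g,x]=1_G$; hence $g\in\mathsf Z(G)$. For \textbf{2.}, both $g\bdot g^{-1}$ and $g^{[n]}$ have product set $\{1_G\}$, so it remains to compare their $\mathcal B(G)$-absorption. Fix $T\in\mathcal F(G)$ with ordered product $P_T$. By the product description, $g\bdot g^{-1}\bdot T$ and $g^{[n]}\bdot T$ both have base product $P_T$, and $1_G$ lies in the product set exactly when $P_T=1_G$, or when $P_T=z$ and some non-commuting pair of terms is present. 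As $g,g^{-1}$ and the copies of $g$ commute among themselves, the only relevant non-commuting pairs arise from a term of $T$ failing to commute with $g$ (equivalently with $g^{-1}$, since $[g^{-1},t]=[g,t]$) or from an internal non-commuting pair of $T$; these conditions are identical for the two sequences. Hence $g\bdot g^{-1}\sim g^{[n]}$.

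For \textbf{3.} assume $gh=hg$. The equivalence (b)$\Leftrightarrow$(c) is a bilinear computation: $gh\in\mathsf Z(G)$ iff $[x,gh]=1_G$ for all $x$, iff $[x,g][x,h]=1_G$, iff $[x,g]=[x,h]$ (both lie in the order-$\le 2$ group $G'$); and the latter holds for all $x$ precisely when $x$ commutes with $g$ iff it commutes with $h$, i.e.\ (b), equivalently $C_G(g)=C_G(h)$. For (a)$\Leftrightarrow$(b) I would show $g^{[n]}\sim h^{[m]}$ iff $C_G(g)=C_G(h)$. If the centralizers coincide, the analysis of part 2 shows that membership of $g^{[n]}\bdot T$ and $h^{[m]}\bdot T$ in $\mathcal B(G)$ depends on $T$ through the same data, so the classes agree. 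Conversely, arguing contrapositively, if say $w\in C_G(g)\setminus C_G(h)$, then $T=w\bdot w^{-1}\bdot z$ has base product $z$ and pairwise commuting terms; $h^{[m]}\bdot T$ contains the non-commuting pair $\{w,h\}$ and so is product-one, while all terms of $g^{[n]}\bdot T$ commute with $g$ and with each other, giving product set $\{z\}$, so it is not product-one. Thus $g^{[n]}\not\sim h^{[m]}$.

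Finally, comparability. By the same computation $[g^{[n]}]+[h^{[m]}]=[g^{[n]}\bdot h^{[m]}]$ is the idempotent whose absorption is governed by $C_G(g)\cap C_G(h)$, so $[g^{[n]}\bdot h^{[m]}]=[g^{[n]}]$ iff $C_G(g)\subseteq C_G(h)$, and $=[h^{[m]}]$ iff $C_G(h)\subseteq C_G(g)$; hence the two idempotents are comparable in the Rees order exactly when the centralizers are nested. \textbf{This is the step I expect to be the main obstacle:} one must deduce from $gh=hg$ that $C_G(g)=\Ker\varphi_g$ and $C_G(h)=\Ker\varphi_h$ are comparable. Since each is the kernel of a homomorphism onto $C_2$, comparability is equivalent to $\varphi_g,\varphi_h$ being dependent (one trivial, or equal), and the natural plan is to exploit the commuting hypothesis $g\in C_G(h)$ and $h\in C_G(g)$ to exclude the remaining case of two \emph{distinct} index-$2$ kernels. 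Resolving this last point is the crux; here I would bring in Proposition \ref{3.5}, using the isomorphisms $\varphi_{[S]}\colon\mathcal H([S])\to G_0/G_0'$ to pin down the abelian subgroups $G_0=\la\supp\ra$ attached to the idempotents $[g^{[n]}]$ and $[h^{[m]}]$ and to compare them, should the direct kernel argument not close on its own.
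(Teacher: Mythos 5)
Your proofs of parts 1 and 2, and of the equivalences (a)\,$\Leftrightarrow$\,(b)\,$\Leftrightarrow$\,(c) in part 3, are correct, and they are more self-contained than the paper's: the paper settles parts 1 and 2 by quoting \cite[Lemmas 3.6.4, 3.7.2 and 3.9.2]{Oh18a} and obtains the equivalences from its main case analysis, whereas you argue directly from the class-two commutator calculus. Your description of product sets ($\pi(S)=\{P\}$ if the terms commute pairwise, $\pi(S)=\{P,Pz\}$ otherwise) and your test sequences $w\bdot w^{-1}\bdot z$ are exactly the right tools, and your reduction of comparability to nestedness of centralizers is faithful: since $[g^{[n]}]+[h^{[m]}]=[g^{[n]}\bdot h^{[m]}]$, one checks as you indicate that $[g^{[n]}\bdot h^{[m]}]=[g^{[n]}]$ if and only if $C_G(g)\subseteq C_G(h)$, and symmetrically.

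The step you could not close --- that $gh=hg$ forces $C_G(g)$ and $C_G(h)$ to be nested --- is indeed a gap in your proposal, but it is not one that can be filled: that statement is false, and with it the comparability claim of the lemma. Let $\la a,b\ra$ and $\la c,d\ra$ be two copies of the dihedral group of order $8$ and let
\[
  G \,=\, \big( \la a,b\ra \times \la c,d\ra \big) \big/ \la (a^2,c^2) \ra
\]
be their central product, an extraspecial $2$-group of order $32$; then $G'=\mathsf Z(G)=\{1_G,z\}$, where $z$ is the image of $(a^2,1)$, so $|G'|=2$ and the lemma applies. Write $g,h,u,v$ for the images of $(a,1),(1,c),(1,d),(b,1)$. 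Then $gh=hg$, $n=m=4$, both $[g^{[4]}]$ and $[h^{[4]}]$ are idempotent by Corollary \ref{3.12}, and
\[
  u\in C_G(g)\setminus C_G(h) \qquad \und \qquad v\in C_G(h)\setminus C_G(g)\,,
\]
so neither centralizer contains the other. Your own test sequences now witness incomparability: for $T_1=u\bdot u^{-1}\bdot z$ all terms of $T_1\bdot g^{[4]}$ commute pairwise, so $\pi(T_1\bdot g^{[4]})=\{z\}$ and $T_1\bdot g^{[4]}\notin\mathcal B(G)$, while $T_1\bdot g^{[4]}\bdot h^{[4]}$ contains the non-commuting pair $u,h$, so $\pi(T_1\bdot g^{[4]}\bdot h^{[4]})=\{1_G,z\}$ and $T_1\bdot g^{[4]}\bdot h^{[4]}\in\mathcal B(G)$; hence $[g^{[4]}\bdot h^{[4]}]\neq[g^{[4]}]$. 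Symmetrically, $T_2=v\bdot v^{-1}\bdot z$ shows $[g^{[4]}\bdot h^{[4]}]\neq[h^{[4]}]$. Thus the two idempotents are not comparable in the Rees order.

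This also explains why no proof was in reach: the paper's own argument for comparability is a non sequitur at precisely the point you flagged. It proves that for each $T$ with $T\bdot g^{[n]}\bdot h^{[m]}\in\mathcal B(G)$ one has $T\bdot g^{[n]}\in\mathcal B(G)$ \emph{or} $T\bdot h^{[m]}\in\mathcal B(G)$, where the valid disjunct depends on $T$, and then concludes that $g^{[n]}\bdot h^{[m]}\sim g^{[n]}$ or $g^{[n]}\bdot h^{[m]}\sim h^{[m]}$, which would require one disjunct to hold uniformly in $T$; in the example above the first uniform disjunct fails at $T_1$ and the second at $T_2$. So your proposal establishes everything in the lemma that is actually true (parts 1, 2 and the equivalences (a)--(c), whose proofs in the paper are unaffected), and the obstacle you isolated is a genuine error in the lemma itself; neither the kernel argument nor an appeal to Proposition \ref{3.5} can repair it. Note that the total-orderedness claim in the proof of Proposition \ref{3.15}.3 rests on this part of Lemma \ref{3.14} and inherits the problem, e.g.\ for the sequence $g^{[4]}\bdot h^{[4]}$ above.
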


\begin{proof}
1. Since $G'$ is a normal subgroup, it follows that, for every $x \in G$, we have $xG' = G'x$. Then, since $|G'| = 2$, we obtain that $G' \subset \mathsf Z (G)$, and it follows that any even power of the elements in $G$ is central (see (\ref{comm})). Since $n$ is odd, it follows that $g^{n-1} \in \mathsf Z (G)$, whence $g^{n-1} \bdot g \, \sim \, g^{n}$ by \cite[Lemma 3.6.4]{Oh18a}. Since $g^{n} = 1_G$ and $1_G \sim 1_{\mathcal F (G)}$, \cite[Lemma 3.7.2]{Oh18a} implies that $g^{n-1}\bdot g \in \mathcal B \big(\mathsf Z (G)\big)$ whence $g \in \mathsf Z (G)$.

\smallskip
2. Observe that if $g \in \mathsf Z (G)$, then the assertion follows by \cite[Lemma 3.7.2]{Oh18a}. Suppose that $g \notin \mathsf Z (G)$. Then item 1 implies that $n$ is even, and hence it is sufficient to show that $g^{n-1} \notin \mathsf Z (G)$. Indeed, if this holds true, then the assertion follows by \cite[Lemma 3.9.2]{Oh18a}. Assume to the contrary that $g^{n-1} \in \mathsf Z (G)$. As at the proof of item 1, we conclude that $g \in \mathsf Z (G)$, a contradiction.

\smallskip
3. By Corollary \ref{3.12}, $\big[ g^{[n]} \big]$, $\big[ h^{[m]} \big] \in \mathsf E (\mathcal C)$. We need to show either $g^{[n]} \bdot h^{[m]} \sim g^{[n]}$ or $g^{[n]} \bdot h^{[m]} \sim h^{[m]}$. 
Clearly if $T \in \mathcal F (G)$ is such that either $T \bdot g^{[n]} \in \mathcal B (G)$ or $T \bdot h^{[m]} \in \mathcal B (G)$, then we have $T \bdot g^{[n]} \bdot h^{[m]} \in \mathcal B (G)$. 
For the converse, let $T \in \mathcal F (G)$ be a sequence with $T \bdot g^{[n]} \bdot h^{[m]} \in \mathcal B (G)$. If $\pi(T \bdot g^{[n]} \bdot h^{[m]}) = \{ 1_G \}$, then $T \in \mathcal B (G)$ and hence we have that $T \bdot g^{[n]} \und T \bdot h^{[m]} \in \mathcal B (G)$. If $\pi \big(T \bdot g^{[n]} \bdot h^{[m]} \big) = G'$, then there are following cases.

\smallskip
\noindent
CASE 1{\rm \,:} \, $|\pi(T)| = 2$.

Since $\pi(T) \subset \pi \big(T \bdot g^{[n]} \bdot h^{[m]} \big) = G'$, it follows that $1_G \in G' = \pi(T)$ whence $T \bdot g^{[n]} \und T \bdot h^{[m]} \in \mathcal B (G)$.

\smallskip
\noindent
CASE 2{\rm \,:} \, $|\pi(T)| = 1$ and there is an $t \in \supp(T)$ such that either $tg \neq gt$ or $th \neq ht$.

If $tg \neq gt$, then $|\pi \big(T \bdot g^{[n]} \big)| = 2$, and since $\pi \big(T \bdot g^{[n]} \big) \subset \pi \big(T \bdot g^{[n]} \bdot h^{[m]} \big) = G'$, it follows that $1_G \in G' = \pi \big(T \bdot g^{[n]} \big)$ whence $T \bdot g^{[n]} \in \mathcal B (G)$. By symmetry, if $th \neq ht$, then $T \bdot h^{[m]} \in \mathcal B (G)$.

\noindent
Thus, in any cases, we obtain either $T \bdot g^{[n]} \in \mathcal B (G)$ or $T \bdot h^{[m]} \in \mathcal B (G)$.

\smallskip
Now we show the equivalent conditions. 

\smallskip
(a) $\Rightarrow$ (b) For any $x \in G$, (a) implies that $x \bdot g^{[n]} \sim x \bdot h^{[m]}$ and hence the assertion follows.

\smallskip
(b) $\Rightarrow$ (a) According to the proof of the main part, we obtain that $g^{[n]} \, \sim \, g^{[n]} \bdot h^{[m]} \, \sim \, h^{[m]}$.

\smallskip
(b) $\Leftrightarrow$ (c) It follows by the equation $[x,gh] = [x,g][x,h]$ because $G' \subset \mathsf Z (G)$ (see (\ref{comm})).
\end{proof}

\smallskip
When $G$ satisfies the equivalent condition given in Corollary \ref{3.12}, we can obtain more precise structural information of the maximal subgroups of the class semigroup as following (cf. Proposition \ref{3.5}).

\smallskip
\begin{proposition} \label{3.15}~
Suppose that $|G'| = 2$, and we set $\mathcal C = \mathcal C \big(\mathcal B (G), \mathcal F (G)\big)$.
If $S \in \mathcal F (G)$ is a sequence with $[S] \in \mathsf E (\mathcal C)$ satisfying the condition given in \textnormal{Lemma \ref{3.4}}, then there is an isomorphism from $\mathcal H \big([S]\big)$ to $G_0/G'_0$, where $G_0 = \la \supp(S) \ra \subset G$ is a subgroup. In particular, we have the following cases{\rm \,:}
\begin{enumerate}
\item If $\pi(S) = G'$, then $\mathcal H \big([S]\big)$ is isomorphic to $G/G'$.

\smallskip
\item If $[S] = [1_{\mathcal F (G)}]$, then $\mathcal H \big([S]\big)$ is isomorphic to $\mathsf Z (G)$.

\smallskip
\item If $\pi(S) = \{ 1_G \}$ with $[S] \neq [1_{\mathcal F (G)}]$, then $\mathcal H \big([S]\big)$ is isomorphic to an abelian subgroup $G_0$ of $G$. Moreover, if there is no $h \in G$ such that $h$ commute with all elements in $\supp(S)$ and $hg \notin \mathsf Z (G)$ for every $g \in \supp(S)$, then $G_0$ is a maximal among the abelian subgroups of $G$.
\end{enumerate}
\end{proposition}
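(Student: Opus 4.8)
The plan is to show that the homomorphism $\varphi_{[S]} \colon \mathcal H \big([S]\big) \to G_0/G'_0$ produced in Proposition \ref{3.5} (with $G_0 = \la \supp(S) \ra$ and $G'_0 = \pi(S)$) is an isomorphism; the three displayed cases then follow by identifying $G_0$ and $G'_0$. Since $|G'| = 2$ we have $G' \subset \mathsf Z(G)$, as in the proof of Lemma \ref{3.14}, so every commutator is central. The technical backbone I would establish first is the product formula
\[
  \pi(A \bdot Z) \, = \, \pi(A)\,\pi(Z)\,D(A,Z) \,, \qquad D(A,Z) = \la\, [a,z] \t a \in \supp(A),\ z \in \supp(Z) \,\ra \subset G' \,,
\]
valid for all $A, Z \in \mathcal F(G)$: the inclusion $\supseteq$ comes from interleaving, and $\subseteq$ follows because reordering $A \bdot Z$ so that the terms of $A$ come first alters the product only by a product of commutators, each lying in the central group $D(A,Z)$. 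Because $[a_1a_2,z] = [a_1,z][a_2,z]$, the group $D(A,Z)$ depends only on $\la \supp(A) \ra$ and on $\supp(Z)$.

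From the product formula I extract the criterion that I shall use constantly: if $\la \supp(A) \ra = \la \supp(B) \ra$ and $\pi(A) = \pi(B)$, then $\pi(A \bdot Z) = \pi(B \bdot Z)$ for every $Z$, hence $A \sim B$. To invert $\varphi_{[S]}$ I would define $\psi \colon G_0/G'_0 \to \mathcal H \big([S]\big)$ by $\psi(gG'_0) = [g \bdot S]$. For $g \in G_0$ every commutator $[g,s]$ with $s \in \supp(S)$ lies in $[G_0,G_0] = G'_0$, so the formula gives $\pi(g \bdot S) = gG'_0$; taking $k$ with $g^k \in G'_0$ gives $\pi(g^{[k]} \bdot S) = G'_0 = \pi(S)$ and $\la \supp(g^{[k]} \bdot S) \ra = G_0$, so the criterion yields $g^{[k]} \bdot S \sim S$. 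Thus some power of $[g \bdot S]$ equals the idempotent $[S]$ and $[g \bdot S] + [S] = [g \bdot S]$, so $[g \bdot S] \in \mathcal H \big([S]\big)$ by Lemma \ref{3.2}, and $\varphi_{[S]} \circ \psi = \mathrm{id}$. For $\psi \circ \varphi_{[S]} = \mathrm{id}$, I would take $[T] \in \mathcal H \big([S]\big)$ with $\pi(T) = gG'_0$ (so $g \in G_0$ and $\la \supp(T) \ra \subset G_0$ by Proposition \ref{3.5}); since $[T] = [T \bdot S]$ and both $T \bdot S$ and $g \bdot S$ have support generating $G_0$ and product set $gG'_0$, the criterion gives $T \bdot S \sim g \bdot S$, i.e. $[T] = \psi(gG'_0)$.

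It remains to read off the cases, for which the governing fact is $\mathsf Z(G) \subset G_0$: for central $z$ the product formula gives $\pi(z^{[\ord(z)]} \bdot S \bdot Z) = \pi(S \bdot Z)$ for all $Z$ (no new commutators appear), so $z^{[\ord(z)]} \bdot S \sim S$ and maximality of $S$ in Lemma \ref{3.4} forces $z \in G_0$. Case $2$ is Proposition \ref{3.5}.2. In Case $1$, where $G'_0 = \pi(S) = G'$, I would show $G_0 = G$: for any $g \in G$ the commutators $[g,z]$ already lie in $G' = G'_0$ and are absorbed, so the product formula again gives $\pi(g^{[n]} \bdot S \bdot Z) = \pi(S \bdot Z)$ for all $Z$ whenever $g^n \in G'$, whence $g^{[n]} \bdot S \sim S$ and $g \in G_0$; therefore $\mathcal H \big([S]\big) \cong G_0/G'_0 = G/G'$. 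In Case $3$, where $\pi(S) = \{1_G\}$, we have $G'_0 = \{1_G\}$, so $G_0$ is abelian and $\mathcal H \big([S]\big) \cong G_0$; under the stated hypothesis $G_0$ is maximal abelian, for if $A \supsetneq G_0$ were abelian and $h \in A \setminus G_0$, then $h$ centralises $\supp(S)$, so by hypothesis $hg \in \mathsf Z(G)$ for some $g \in \supp(S)$, giving $h = (hg)g^{-1} \in \mathsf Z(G)\,G_0 = G_0$, a contradiction.

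The hard part will be the product formula together with the observation $\mathsf Z(G) \subset G_0$: these are what remove the a priori ambiguity in the maximal support group $G_0$. In particular they are what force $G_0 = G$ in Case $1$ — ruling out proper subgroups having the same commutator subgroup $G'$ — and what make the centraliser argument close up in Case $3$.
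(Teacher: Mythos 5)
Your proposal is correct, but it takes a genuinely different route from the paper's. The paper argues case by case: Cases 1 and 2 are quoted from Proposition \ref{3.5} (smallest and greatest idempotent); in Case 3, injectivity of $\varphi_{[S]}$ comes from Corollary \ref{3.12} (if $\pi(T) = \{1_G\}$ then $T \in \mathcal B (G)$, so $[T]$ is idempotent and hence equals $[S]$, the unique idempotent of the group $\mathcal H\big([S]\big)$), surjectivity comes from the explicit sequence $T = S \bdot g_1^{[r_1]} \bdot \ldots \bdot g_{\ell}^{[r_{\ell}]}$ together with an exhibited inverse, and maximality comes from Lemma \ref{3.14}.3 and the total ordering of the idempotents $\big[g_i^{[n_i]}\big]$. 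You instead build one structural tool, the product formula $\pi(A \bdot Z) = \pi(A)\pi(Z)D(A,Z)$ (valid because $|G'|=2$ forces $G' \subset \mathsf Z (G)$ and all commutators to have order at most $2$), and extract from it the criterion that equal generated supports plus equal product sets imply $\mathcal B (G)$-equivalence --- precisely a converse, under these hypotheses, of the paper's standing fact that $\sim$ implies equal product sets. This buys you a uniform proof that $\varphi_{[S]}$ is bijective (explicit inverse $gG'_0 \mapsto [g \bdot S]$) for every idempotent satisfying the Lemma \ref{3.4} condition, instead of a case analysis; it settles Case 1 directly by proving $G_0 = G$, whereas the paper's citation of Proposition \ref{3.5}.1 silently requires knowing that $\pi(S) = G'$ together with the Lemma \ref{3.4} condition forces $[S]$ to be the smallest idempotent; and your maximality argument in Case 3 ($h = (hg)g^{-1} \in \mathsf Z (G)\,G_0 = G_0$, using $\mathsf Z (G) \subset G_0$) is purely group-theoretic and bypasses Lemma \ref{3.14} entirely.

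Two steps deserve more care than you give them, though neither is a fatal gap. First, the inclusion $\pi(A)\pi(Z)D(A,Z) \subset \pi(A \bdot Z)$ is not mere ``interleaving'': to realize $\alpha\zeta c$ with $c = [a_0,z_0] \neq 1_G$, start from the concatenated ordering realizing $\alpha\zeta$ and slide $z_0$ leftward one position at a time; each step multiplies the product by a central element of $\{1_G, c\}$, and since passing $a_0$ flips it, the ordering at the \emph{first} flip realizes $\alpha\zeta c$. This argument, and the formula itself, genuinely need $|G'| = 2$: for a central commutator of order $3$ one has $\pi(a \bdot z) = \{az, za\}$ with two elements while the formula would predict three, so your machinery is correctly scoped to the present proposition but cannot be recycled for larger class-$2$ groups. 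Second, you never check that $\psi$ is well defined (independent of the coset representative); this is immediate from your own criterion, since $g' \in gG'_0$ gives $\la \supp(g' \bdot S) \ra = G_0 = \la \supp(g \bdot S) \ra$ and $\pi(g' \bdot S) = g'G'_0 = gG'_0 = \pi(g \bdot S)$, but it should be said, or else one should note that your two composition identities only require $\psi$ to be defined on chosen representatives.
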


\begin{proof}
1. and 2. This follows from Proposition \ref{3.5}.

\smallskip
3. Let $S = g_1 \bdot \ldots \bdot g_{\ell} \in \mathcal F (G)$ be such that $\pi(S) = \{1_G\}$ and $[S] \neq [1_{\mathcal F (G)}]$. Then, by Proposition \ref{3.3}, $G_0 = \la \supp(S) \ra = \la g_1, \ldots, g_{\ell} \ra \subset G$ is a subgroup with $G'_0 = \pi(S) = \{ 1_G \}$, whence it is abelian. Let $n_i = \ord (g_i)$ for $ i \in [1,\ell]$ and let $m = \lcm \{ n_i \t i \in [1,\ell] \}$. Then, by Proposition \ref{3.5}, the map
\[
  \begin{aligned}
    \varphi_{[S]} : \mathcal H \big([S]\big) \quad & \to \quad G_0 \\
                                         [T] \quad & \mapsto \quad \pi(T)
  \end{aligned}
\]
is a group homomorphism. We assert that this map is bijective.
To show injectivity, let $T \in \mathcal F (G)$ be a sequence with $[T] \in \ker (\varphi_{[S]})$. Then $\pi(T) = \{ 1_G \}$ whence $T \in \mathcal B (G)$. By Corollary \ref{3.12}, we have $[T] \in \mathsf E (\mathcal C)$, and it follows that $[T] = [S]$ by Lemma \ref{3.2}.1. Since $[S] \in \mathcal H \big([S]\big)$ is identity element by Lemma \ref{3.2}.1, we obtain that $\varphi_{[S]}$ has the trivial kernel whence it is injective.
To show surjectivity, it is sufficient to show that for every $g \in G$ there is a $T \in \mathcal F (G)$ such that $[T] \in \mathcal H \big([S]\big)$ with $\pi(T) = \{ g \}$.
Let $g \in G_0$ so that $g = g^{r_1}_1\ldots g^{r_{\ell}}_{\ell}$, where $r_1, \ldots, r_{\ell} \in \N_0$ with $r_i \in [1,n_i]$ for $i \in [1,\ell]$.
Then we consider a sequence 
\[
  T = S \bdot g^{[r_1]}_1 \bdot g^{[r_2]}_2 \bdot \ldots \bdot g^{[r_{\ell}]}_{\ell} \in \mathcal F (G) \,.
\]
Since $[S] \in \mathsf E (\mathcal C)$,
\[
  [T] + [S] \, = \, [T] \quad \und \quad [T] + \big[ g^{[m - r_1]}_1 \bdot \ldots \bdot g^{[m - r_{\ell}]}_{\ell} \big] \, = \, \big[S \bdot g^{[m]}_1 \bdot \ldots \bdot g^{[m]}_{\ell}\big] \, = \, \big[S^{[m+1]}\big] \, = \, [S] \,,
\]
whence $[T] \in \mathcal H \big([S]\big)$ and $\pi(T) = \{ g \}$ by the construction. Thus $T \in \mathcal F (G)$ is the desired sequence and hence $\varphi_{[S]}$ is surjective.

Suppose in addition that there are no such elements stated in $3$. Since $G_0 = \la \supp(S) \ra$ is abelian, Lemma \ref{3.14}.3 implies that the subset $\big\{ \big[ g^{[n_i]}_i \big] \t i \in [1,\ell] \big\} \subset \mathsf E (\mathcal C)$ is totally ordered, and hence after renumbering if necessary, we can assume that 
\[
  \big[ g^{[n_1]}_1 \big] \,\, \le \,\, \ldots \,\, \le \,\, \big[ g^{[n_{\ell}]}_{\ell} \big] \,.
\]
Since $S = g_1 \bdot \ldots \bdot g_{\ell}$ with $[S] \in \mathsf E (\mathcal C)$, it follows that
\[
  S \, \sim \, S^{[m]} \, \sim \, g^{[n_1]}_1 \bdot \ldots \bdot g^{[n_{\ell}]}_{\ell} \, \sim \, g^{[n_1]}_1 \,.
\]
Now let $H \subset G$ be an abelian subgroup containing $G_0$, and let $h \in H$ with $k = \ord (h)$. Then, by assumption, there is an $i \in [1,\ell]$ such that $hg_i \in \mathsf Z (G)$. Hence Lemma \ref{3.14}.3 implies that $h^{[k]} \sim g^{[n_i]}_i$. Since $S \sim g^{[n_1]}_1$ and $\big[g^{[n_1]}_1\big] \le \big[g^{[n_j]}_j\big]$ for all $j \in [1,\ell]$, it follows that $h^{[k]}\bdot S \sim S$, and hence $h \in \la \supp(S) \ra = G_0$ by Lemma \ref{3.4}. Thus we obtain that $G_0 = H$ whence $G_0$ is a maximal abelian subgroup of $G$.
\end{proof}

\bigskip
\section{Arithmetic of the monoid of product-one sequences} \label{4}
\bigskip

Transfer Krull monoids over finite abelian groups include commutative Krull monoids with finite class group having prime divisors in all classes (including, in particular, commutative Krull and Dedekind domains) but they also include wide classes of non-commutative Dedekind domains (see \cite{Sm13a, Ba-Sm15, Sm18a} for original work and \cite{Ge16c} for a survey and extended list of examples). Let $H$ be a transfer Krull monoid and $\theta \colon H \to \mathcal B (G)$ a (weak) transfer homomorphism to the monoid of product-one sequences over a finite abelian group $G$. Then sets of lengths of $H$ and of $\mathcal B (G)$ coincide and because of this connection the study of sets of lengths of $\mathcal B (G)$ is a central topic in factorization theory.
The  Characterization Problem is one of the most important problems in this area. It asks whether for each two transfer Krull monoids $H$ and $H'$ over groups $G$ and $G'$  their system of sets of lengths coincide if and only if $G$ and $G'$ are isomorphic. The standing conjecture is that this holds true for all finite abelian groups $G$ (apart from two trivial exceptional pairings) and  we refer to \cite{Ge-Sc16a, Ge-Zh17b, Zh17a, Zh19a} for recent progress in this direction.

In this section we study sets of lengths of $\mathcal B (G)$ for non-abelian groups. Our goal is to understand if and to what extent their sets of lengths differ from sets of lengths of $\mathcal B (G)$ over finite abelian groups. We briefly gather terminology and notation.

Let $H$ be an atomic monoid and $a, b \in H$. If $a = u_1 \cdot \ldots \cdot u_k$, where $k \in \N$ and $u_1, \ldots, u_k \in \mathcal A (H)$, then $k$ is called the length of the factorization and $\mathsf L (a) = \{ k \in \N \mid a \ \text{has a factorization of length} \ k \} \subset \N$ is the {\it set of lengths} of $a$. As usual we set $\mathsf L (a) = \{0\}$ if $a \in H^{\times}$, and then
\[
  \mathcal L (H) = \{ \mathsf L (a) \mid a \in H \}
\]
denotes the {\it system of sets of lengths} of $H$. If $k \in \N$ and  $H \ne H^{\times}$, then
\[
  \mathcal U_k (H) = \bigcup_{k \in L, L \in \mathcal L (H)} L \ \subset \ \N
\]
denotes the union of sets of lengths containing $k$, and we set $\rho_k (H) = \sup \mathcal U_k (H)$.
If $L = \{m_1, \ldots, m_k \} \subset \Z$ is a finite subset of the integers, where $k \in \N$ and $m_1 < \ldots < m_k$, then $\Delta (L) = \{m_i - m_{i-1} \mid i \in [2,k]\} \subset \N$ is the set of distances of $L$. If all sets of lengths are finite, then
\[
  \Delta (H) = \bigcup_{L \in \mathcal L (H)} \Delta (L)
\]
is the {\it  set of distances} of $H$.  Let $G$ be a finite group. As usual, we set
\[
\Delta (G) = \Delta \big(\mathcal B (G) \big), \mathcal L (G) = \mathcal L \big( \mathcal B (G) \big), \quad \text{and} \quad \rho_k (G) = \rho_k \big( \mathcal B (G) \big) \quad \text{for every $k \in \N$} \,.
\]
Since $\mathcal B (G)$ is finitely generated, it is easy to see that $\Delta (G)$ and all $\mathcal U_k (G)$ are finite (see \cite[Theorem 3.1.4]{Ge-HK06a}). Furthermore, we have $\mathcal U_k (G)=\{k\}$ for all $k \in \N$ (equivalently, $\Delta (G)=\emptyset$) if and only if $|G|\le 2$ (see \cite[Theorem 3.2.4]{Cz-Do-Ge16a}). Thus, whenever convenient, we will assume that $|G|\ge 3$.

Along the lines of the proofs in the abelian setting we showed in \cite[Theorem 5.5]{Oh18a} that $\mathcal U_k (G)$ is a finite interval for all $k \in \N$ and, under a mild additional hypothesis, that $\Delta (G)$ is a finite interval.
However, there are also striking differences between the abelian and the non-abelian setting and they all have their origin in the fact that, in non-abelian case, the embedding $\mathcal B (G) \hookrightarrow \mathcal F (G)$ is not a divisor homomorphism (see Theorem \ref{3.1}). Thus there exist $U, V \in \mathcal B (G)$ such that $U$ divides $V$ in $\mathcal F (G)$, but not in $\mathcal B (G)$. Moreover, $U$ and $V$ can be atoms (e.g., if $G$ is the quaternion group, then $U=I^{[4]} \in \mathcal A (G)$ and $V=I^{[4]} \bdot J^{[2]} \in \mathcal A (G)$ have this property \cite[Example 4.1]{Oh18a}).

Suppose that $G$ is abelian. The Characterization Problem asks whether for any finite abelian group $G^*$, $\mathcal L (G)= \mathcal L (G^*)$ implies that $G$ and $G^*$ are isomorphic. The only exceptional cases known so far are groups with small Davenport constant. Indeed, we have $\mathcal L (C_1) = \mathcal L (C_2)$ and $\mathcal L (C_3) = \mathcal L (C_2 \oplus C_2)$ (note that these four groups are precisely the groups having Davenport constant at most three). All groups having Davenport constant at most five are abelian. In Theorem \ref{4.7}, we show that if $G$ is a finite group with Davenport constant six and $G^*$ is any finite group with $\mathcal L (G) = \mathcal L (G^*)$, then $G$ and $G^*$ are isomorphic. Results of the same flavor are established in Theorem \ref{4.4}.

Before finding sets of lengths which are characteristic for a given group, we determine those sets of non-negative integers which are sets of lengths over all finite groups. It turns out that this is a simple consequence of the associated result in the abelian setting (sets which are sets of lengths in all numerical monoids are determined in \cite{Ge-Sc19a}).

\smallskip
\begin{lemma} \label{4.1}~
We have
\[
   \bigcap \  \mathcal L (G) = \{ y + 2k + [0,k] \t y, k \in \N_0 \} \,,
\]
where the intersection is taken over all finite groups $G$ with $|G| \ge 3$.
\end{lemma}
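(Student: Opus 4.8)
The plan is to reduce the intersection over all finite groups to the intersection over finite \emph{abelian} groups, where the answer is supplied by the abelian theory. Throughout write $\mathcal I = \{ y + 2k + [0,k] \t y, k \in \N_0 \}$ for the right-hand side, and abbreviate by $\mathcal L(G)$ the system of sets of lengths of $\mathcal B(G)$.

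First I would record two elementary facts about subgroups. If $H \le G$ is a subgroup, then $\mathcal B(H)$ is a divisor-closed submonoid of $\mathcal B(G)$: any divisor in $\mathcal B(G)$ of a sequence $S \in \mathcal B(H)$ is a product-one subsequence of $S$, hence again a sequence over $H$ and therefore lies in $\mathcal B(H)$ (this is the divisor-homomorphism property of $\mathcal B(H) \hookrightarrow \mathcal B(G)$ already used in the proof of Theorem \ref{3.11}). Consequently the atoms of $\mathcal B(H)$ are exactly the atoms of $\mathcal B(G)$ lying in $\mathcal F(H)$, every factorization in $\mathcal B(G)$ of an element of $\mathcal B(H)$ already takes place in $\mathcal B(H)$, and thus $\mathsf L_{\mathcal B(H)}(S) = \mathsf L_{\mathcal B(G)}(S)$ for all $S \in \mathcal B(H)$; in particular $\mathcal L(H) \subseteq \mathcal L(G)$. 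Second, every finite group $G$ with $|G| \ge 3$ contains an abelian subgroup of order at least $3$: if some $g \in G$ has $\ord(g) \ge 3$ take $\la g \ra$, and otherwise every non-identity element is an involution, so $G$ is an elementary abelian $2$-group of order $\ge 4$ and one may take $H = G$.

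These two facts would give the reduction. On the one hand, since the finite abelian groups of order $\ge 3$ form a subfamily of all finite groups of order $\ge 3$, intersecting over the larger family yields the smaller set, so $\bigcap_{|G|\ge 3}\mathcal L(G) \subseteq \bigcap_{H \text{ abelian}, |H| \ge 3}\mathcal L(H)$. On the other hand, given any $L$ lying in $\mathcal L(H)$ for every abelian $H$ of order $\ge 3$, and any finite group $G$ with $|G| \ge 3$, I would choose an abelian subgroup $H \le G$ with $|H| \ge 3$; then $L \in \mathcal L(H) \subseteq \mathcal L(G)$, and since $G$ is arbitrary this proves the reverse inclusion. Hence the two intersections coincide, and it remains only to identify the abelian one.

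The final step is to quote the abelian-setting result that $\bigcap_{H \text{ abelian}, |H| \ge 3}\mathcal L(H) = \mathcal I$, which together with the reduction yields the claim. This is exactly where the abelian input enters, and it is the genuinely hard part: the inclusion $\mathcal I \subseteq \bigcap \mathcal L(H)$ amounts to realizing every interval $y + 2k + [0,k]$ as a set of lengths in each abelian group of order $\ge 3$ (already nontrivial in cyclic $2$-groups, where neither $C_3$ nor $C_2 \oplus C_2$ embeds as a subgroup), while the reverse inclusion $\bigcap \mathcal L(H) \subseteq \mathcal I$ requires bounding the common sets of lengths by testing against large (e.g.\ cyclic) groups. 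This is the abelian-group analogue of the numerical-monoid computation carried out in \cite{Ge-Sc19a}, and I would invoke it directly. By contrast, the group-theoretic reduction above is soft and is the only genuinely new ingredient needed here.
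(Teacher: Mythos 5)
Your proposal is correct and follows essentially the same route as the paper: both reduce the intersection over all finite groups to the intersection over finite abelian groups by exhibiting in any $G$ with $|G|\ge 3$ an abelian subgroup of order at least $3$ (a cyclic group $C_n$ with $n \ge 3$, or a Klein four group / elementary abelian $2$-group when every non-identity element is an involution), using that $\mathcal L(H) \subseteq \mathcal L(G)$ for subgroups $H \le G$, and then quoting the known abelian-case identification of the intersection (the paper cites \cite[Theorem 3.6]{Ge-Sc-Zh17b} for this). Your only added value is that you spell out the divisor-closedness of $\mathcal B(H) \subset \mathcal B(G)$ behind the inclusion $\mathcal L(H) \subseteq \mathcal L(G)$, which the paper leaves implicit.
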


\begin{proof}
From \cite[Theorem 3.6]{Ge-Sc-Zh17b}, we obtain that
\[
  \bigcap_{(1)} \mathcal L (G) \,\, \subset \,\, \bigcap_{(2)} \mathcal L (G) \overset{(a)}{=} \{ y + 2k + [0,k] \t y, k \in \N_0 \} \,,
\]
where the  intersection (1) is taken over all finite groups with $|G|\ge 3$ and the  intersection (2) is taken over all finite abelian groups $G$ with $|G|\ge 3$.
Hence it suffices to show that for every finite  group $G$ with $|G|\ge 3$ we have
\[
  \{ y + 2k + [0,k] \t y, k \in \N_0 \} \subset \mathcal L (G) \,.
\]
Let $G$ be a finite  group with $|G|\ge 3$. Then $G$ contains an element $g \in G$ with $\ord(g) = n \ge 3$ or $G$ contains two distinct elements of order two that commute with each other. Then Equation (a) implies that
\[
  \{ y + 2k + [0,k] \t y, k \in \N_0 \} \subset \mathcal L (C_n) \cap \mathcal L (C_2 \oplus C_2) \subset \mathcal L (G) \,. \qedhere
\]
\end{proof}

\smallskip
For a sequence $S = g_1 \bdot \ldots \bdot g_{\ell} \in \mathcal F (G)$, we use the following notation{\rm \,:}
\[
  S^{-1} = g^{-1}_1 \bdot \ldots \bdot g^{-1}_{\ell} \in \mathcal F (G) \,.
\]

\smallskip
\begin{lemma} \label{4.2}~
Let $G$ be a finite group with $|G| \ge 3$.
\begin{enumerate}
\item The following statements are equivalent{\rm \,:}
       \begin{enumerate}
        \item[(a)] $G$ is non-abelian.

        \smallskip
        \item[(b)] Every sequence $S \in \mathcal F (G)$ of length $\mathsf D (G)$ has a non-trivial proper product-one subsequence $($i.e., there is a sequence $T \in \mathcal B (G)$ with $1_{\mathcal F (G)} \ne T \neq S$ and $T \t S )$.
       \end{enumerate}

\smallskip
\item We have $\mathsf D (G/G') \le \mathsf D (G)$.

\smallskip
\item For every $S \in \mathcal B (G)$ the following statements are equivalent{\rm \,:}
      \begin{enumerate}
      \item[(a)] $\{2, \mathsf D (G)\} \subset \mathsf L (S)$.

      \smallskip
      \item[(b)] $S = U\bdot U^{-1}$ for some $U \in \mathcal A (G)$ with $| U | = \mathsf D (G)$.
\end{enumerate}
\end{enumerate}
\end{lemma}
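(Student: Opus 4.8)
The plan is to prove the three items in order, using the structural results from Section 3 together with elementary facts about product-one sequences collected in Lemma 2.2. For item (1), the implication (b) $\Rightarrow$ (a) is the contrapositive of the cyclic case: if $G$ is abelian, then $\mathcal B(G)$ is Krull (Theorem \ref{3.1}), so $\mathcal B(G) \hookrightarrow \mathcal F(G)$ is a divisor homomorphism, and one can exhibit a product-one free sequence $W$ of length $\mathsf d(G)$ whose one-term extension $g_0^{-1}\bdot W$ is an atom of length $\mathsf d(G)+1 = \mathsf D(G)$ (using Lemma \ref{2.2}.3 in the cyclic reduction) that has no proper nontrivial product-one subsequence, thus violating (b). For the forward direction (a) $\Rightarrow$ (b), I would take $S \in \mathcal F(G)$ with $|S| = \mathsf D(G)$; since $\mathsf D(G) = |S|$ exceeds the length at which every sequence already contains a product-one subsequence, and since $G$ non-abelian forces $1_G \in \pi(S\bdot S^{-1})$-type products to split, the sequence $S$ must contain some nontrivial proper $T \t S$ with $1_G \in \pi(T)$. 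The key leverage is that in the non-abelian setting an atom of maximal length $\mathsf D(G)$ can still be divided in $\mathcal F(G)$ by a shorter product-one sequence even when it is irreducible in $\mathcal B(G)$, exactly the phenomenon highlighted in the paragraph preceding the lemma.

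For item (2), the inequality $\mathsf D(G/G') \le \mathsf D(G)$ should follow by a transfer argument through the canonical epimorphism $\theta \colon G \to G/G'$, extended to $\theta \colon \mathcal F(G) \to \mathcal F(G/G')$. Given an atom $\overline U \in \mathcal A(G/G')$ of maximal length $\mathsf D(G/G')$, I would lift each term to an arbitrary preimage, obtaining $S \in \mathcal F(G)$ with $\theta(S) = \overline U$ and $|S| = |\overline U|$; since $G/G'$ is abelian, $\theta(S)$ being a nontrivial sequence with no proper product-one subsequence forces $S$ itself to have no proper product-one subsequence over $G$ either (because a product-one subsequence of $S$ maps to a product-one subsequence of $\theta(S)$, as $\ker\theta = G'$ and $\pi$ interacts correctly with $\theta$). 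After adjoining one term to make $S$ itself product-one, one obtains an atom of $\mathcal B(G)$ of length at least $|\overline U|$, giving the bound. The mild subtlety here is verifying that minimality of the lift is preserved; I expect this to be routine given the homomorphism property $1_G \in \pi(\theta(T)) \Leftrightarrow \pi(T)\cap\ker\theta \ne \emptyset$ stated in the background.

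For item (3), the equivalence (b) $\Rightarrow$ (a) is immediate: if $S = U \bdot U^{-1}$ with $U \in \mathcal A(G)$ and $|U| = \mathsf D(G)$, then $2 \in \mathsf L(S)$ via the factorization $U \bdot U^{-1}$, while the single atom $U$ together with the terms of $U^{-1}$ regrouped gives a factorization of length $\mathsf D(G)$, so $\{2, \mathsf D(G)\} \subset \mathsf L(S)$. The harder direction is (a) $\Rightarrow$ (b), which I expect to be the main obstacle. Here $2 \in \mathsf L(S)$ means $S = V_1 \bdot V_2$ with $V_1, V_2 \in \mathcal A(G)$, while $\mathsf D(G) \in \mathsf L(S)$ forces $|S| \ge \mathsf D(G)$, combined with $|S| = |V_1| + |V_2| \le 2\mathsf D(G)$ and the maximal-length factorization constraint. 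The crux is to show that the length-$\mathsf D(G)$ factorization, consisting of $\mathsf D(G)$ atoms (necessarily of length at least $2$ each unless some atom is a single central involution), together with the two-atom factorization, pins down $|V_1| = |V_2| = \mathsf D(G)$ and $V_2 = V_1^{-1}$. I would argue that a factorization into $\mathsf D(G)$ atoms of a sequence of length at most $2\mathsf D(G)$ must consist entirely of atoms of length $2$ of the form $g \bdot g^{-1}$ (the only product-one sequences of length $2$), whence $S$ is a product of such pairs, and then matching this against $S = V_1 \bdot V_2$ with both factors atoms of length $\mathsf D(G)$ forces $V_2 = V_1^{-1}$ termwise. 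The delicate point is ruling out mixed factorizations and confirming that $|V_1| = |V_2| = \mathsf D(G)$ rather than some unbalanced split; this I would control using $|S| \le 2\mathsf D(G)$ and the fact that each atom in the long factorization contributes length exactly $2$.
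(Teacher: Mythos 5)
Your treatment of item 2 is correct and takes a genuinely different route from the paper's. You lift a maximal-length atom of $\mathcal A (G/G')$ through $\theta \colon \mathcal F (G) \to \mathcal F (G/G')$ and observe that any nontrivial proper product-one subsequence of the lift would push down to one of the atom, which cannot exist over the abelian group $G/G'$; hence the lift is either itself an atom of length $\mathsf D (G/G')$ or product-one free, and in both cases $\mathsf D (G) \ge \mathsf D (G/G')$ follows from Lemma \ref{2.2}. The paper argues in the opposite direction: it applies item 1 to the non-abelian group $G$ and pushes proper product-one subsequences down to the quotient, getting $\mathsf D (G/G') = \mathsf d (G/G') + 1 \le \mathsf D (G)$. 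Your version has the advantage of not depending on item 1 at all. Your item 1, direction (b) $\Rightarrow$ (a), also agrees with the paper, though it is more roundabout than necessary: any atom of length $\mathsf D (G)$ works directly via the divisor homomorphism of Theorem \ref{3.1}, and note that Lemma \ref{2.2}.3 covers only cyclic groups, so it cannot be cited for $\mathsf d (G) + 1 = \mathsf D (G)$ over a general abelian group.

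Item 1, direction (a) $\Rightarrow$ (b), has a genuine gap. The inequality $\mathsf D (G) \ge \mathsf d (G) + 1$ only yields a nontrivial product-one subsequence $T \t S$, and nothing prevents $T = S$; properness is the entire content of (b), and the critical case is exactly when $S$ is an atom of length $\mathsf D (G)$. Your appeal to ``$G$ non-abelian forces $1_G \in \pi (S \bdot S^{-1})$-type products to split'' is not an argument. The paper settles this case in three steps: if an atom $S$ of length $\mathsf D (G)$ had no proper nontrivial product-one subsequence, then deleting one term leaves a product-one free sequence, which forces $\mathsf D (G) = \mathsf d (G) + 1$ and $\la \supp (S) \ra = G$ by Lemma \ref{2.2}.2; next $\pi (S) = \{ 1_G \}$, since any $h \in \pi (S) \setminus \{ 1_G \}$ makes $h^{-1} \bdot S$ a product-one sequence of length $\mathsf D (G) + 1$, which factors into at least two atoms, and every atom of that factorization avoiding the adjoined term $h^{-1}$ is a proper product-one subsequence of $S$; finally $\pi (S) = \{ 1_G \}$ forces the terms of $S$ to commute pairwise, so $G = \la \supp (S) \ra$ would be abelian. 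None of these ideas appears in your sketch, and I do not see how to avoid them.

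In item 3 your counting ($1_G \nmid S$, every atom in the long factorization of length two, $|U_1| = |U_2| = \mathsf D (G)$) coincides with the paper's. But your justification of the last step is invalid: the claim that each pair $g \bdot g^{-1}$ must contribute one term to each of the two long atoms (``forces $V_2 = V_1^{-1}$ termwise'') presupposes that an atom cannot contain both $g$ and $g^{-1}$, which is an abelian fact. Over a non-abelian group it fails, e.g. $b^{[3]} \bdot (ab)^{[3]} \in \mathcal A (D_6)$ contains $b \bdot b^{-1} = b^{[2]}$. Worse, the conclusion itself can fail for a given two-atom factorization: in $D_6$ (so $\ord (a) = 3$ and $\mathsf D (D_6) = 6$), both $U_1 = a^{[4]} \bdot b^{[2]}$ and $U_2 = (a^2)^{[4]} \bdot (ab)^{[2]}$ are atoms of length $6$ (the second is the image of the first under the automorphism $a \mapsto a^2$, $b \mapsto ab$), and $S = U_1 \bdot U_2$ factors into the six length-two atoms $(a \bdot a^2)^{[4]} \bdot (b \bdot b) \bdot \big( (ab) \bdot (ab) \big)$, so $\{2, 6\} \subset \mathsf L (S)$, yet $U_2 \ne U_1^{-1}$; one can even check that no atom $U$ of length $6$ satisfies $U \bdot U^{-1} = S$, so the defect cannot be repaired by choosing a different $U$. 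You should be aware that the paper itself asserts ``$U_2 = U_1^{-1}$'' at this point without proof, so you have reproduced its gap rather than closed it; the later applications (Theorems \ref{4.4} and \ref{4.7}) invoke the lemma only for sequences with $\mathsf L (S) = \{2, \mathsf D (G)\}$ exactly, which excludes the example above since there $4 \in \mathsf L (S)$. A final small point: a single central involution is never a product-one sequence --- the only length-one atom is $1_G$ --- so the exceptional case you flag does not arise, while the case $1_G \t S$ does require the paper's short separate argument.
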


\begin{proof}
1. (a) $\Rightarrow$ (b) Assume to the contrary that there exists a $S \in \mathcal F (G)$ of length $\mathsf D (G)$ such that $S$ has no non-trivial proper product-one subsequence.
If $S$ is not a product-one sequence, then $S$ is  product-one free whence $\mathsf D (G) = |S| \le \mathsf d (G) < \mathsf D (G)$, a contradiction. Thus $S$ is a product-one sequence and hence it is an atom. If we set $S = g_1 \bdot \ldots \bdot g_{\mathsf D (G)}$, then $T = g_1 \bdot \ldots \bdot g_{\mathsf D (G) - 1}$ is  product-one free of length $|T| = \mathsf D (G) - 1$. Since $\mathsf d (G) + 1 \le \mathsf D (G)$, we have that $\mathsf D (G) = \mathsf d (G) + 1$ and $|T| = \mathsf d (G)$, and hence $\la \supp(S) \ra = G$ by Lemma \ref{2.2}.2.

We now assert that $\pi(S) = \{ 1_G \}$. If there exists an $h \in \pi(S) \setminus \{ 1_G \}$, then $h^{-1}\bdot S \in \mathcal B (G)$ has length $\mathsf D (G) + 1$ whence it has a factorization into at least two atoms, say
\[
  h^{-1}\bdot S = U_1 \bdot \ldots \bdot U_k \,, \quad \mbox{ where } \,\, k \ge 2, \,\, U_1, \cdots, U_k \in \mathcal A (G) \und h^{-1} \in \supp(U_1) \,.
\]
Since $h \neq 1_G$, we have that $|U_1| \ge 2$ and that $S$ has a proper product-one subsequence, a contradiction.
Thus we obtain that $\pi(S) = \{ 1_G \}$. Thus each two  elements in $\supp(S)$ commute whence $\la \supp(S) \ra = G$ is abelian, a contradiction.

\smallskip
(b) $\Rightarrow$ (a) If $G$ is abelian, then an atom of length $\mathsf D (G)$ does not have a proper non-trivial product-one subsequence.

\smallskip
2. If $G$ is abelian, then $|G'|=1$ and $G/G' \cong G$. Suppose that $G$ is non-abelian and consider a sequence
$T={g_1}G' \bdot \ldots \bdot {g_{\ell}}G' \in \mathcal F (G/G')$  with $\ell = \mathsf D (G)$ and $g_1, \ldots, g_{\ell} \in G$.
By 1., the sequence
\[
  S = g_1 \bdot \ldots \bdot g_{\ell} \in \mathcal F (G)
\]
has a proper product-one subsequence whence $T$ has  a proper product-one subsequence. Thus we obtain that
\[
  \mathsf D (G/G') = \mathsf d (G/G') + 1 \,\, \le \,\, |T | = \mathsf D (G) \,.
\]

\smallskip
3. If $S$ has the form given in (b), then clearly $\{2, \mathsf D (G)\} \subset \mathsf L (S)$. Conversely, suppose that
 $\{2, \mathsf D (G)\} \subset \mathsf L (S)$. Then there are $U_1, U_2, V_1, \ldots, V_{\mathsf D (G)} \in \mathcal A (G)$ such that
\[
  S = U_1 \bdot U_2 = V_1 \bdot \ldots \bdot V_{\mathsf D (G)} \,.
\]
If $1_G \t S$, then after renumbering if necessary it follows that $U_1 = 1_G = V_1$ whence $\mathsf D (G)=2$, a contradiction to $|G|\ge 3$.   Therefore,  $1_G \nmid S$ and hence $| V_k | \geq 2$ for all $k \in [1,\mathsf D (G)]$.
Then we obtain that
\[
  2\mathsf D (G) \leq | V_1\bdot \ldots \bdot V_{\mathsf D (G)} | = |S| = | U_1\bdot U_2 | \leq 2\mathsf D (G) \,,
\]
and it follows that $|S| = 2\mathsf D (G)$. Thus $|V_k| = 2$ for all $k \in [1,\mathsf D (G)]$,  $|U_1| = |U_2| = \mathsf D (G)$, and hence $U_2 = U^{-1}_1$.
\end{proof}

\smallskip
We need the following result for finite abelian groups (see \cite[Theorem 6.6.3]{Ge-HK06a}).

\smallskip
\begin{lemma} \label{4.3}~
For a finite abelian groups $G^*$ with $|G^*| \ge 3$ the following statements are equivalent{\rm \,:}
\begin{enumerate}
\item[(a)] Every $L \in \mathcal L (G^*)$ with $\{2, \mathsf D (G^*)\} \subset L$ satisfies \ $L = \{2, \mathsf D (G^*) \}$.

\smallskip
\item[(b)] $\{ 2, \mathsf  D (G^*) \} \in \mathcal L (G^*)$.

\smallskip
\item[(c)] $G^*$ is either cyclic or an elementary $2$-group.
\end{enumerate}
\end{lemma}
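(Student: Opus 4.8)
This is \cite[Theorem 6.6.3]{Ge-HK06a}, and since $G^*$ is abelian I would work with zero-sum sequences in the multiplicative notation of the paper. The plan is to prove the cycle (a)$\Rightarrow$(b)$\Rightarrow$(c)$\Rightarrow$(a), the crucial reduction being Lemma \ref{4.2}.3: for any atom $U \in \mathcal A (G^*)$ with $|U| = \mathsf D (G^*)$ one has $\{2, \mathsf D (G^*)\} \subset \mathsf L (U \bdot U^{-1})$, and conversely every $L \in \mathcal L (G^*)$ containing $\{2, \mathsf D (G^*)\}$ is realized by such an element $U \bdot U^{-1}$. Thus statement (a) asserts that $\mathsf L (U \bdot U^{-1}) = \{2, \mathsf D (G^*)\}$ for \emph{every} maximal atom $U$, while (b) asserts the same for \emph{some} maximal atom; in particular (a)$\Rightarrow$(b) is immediate once one notes that a maximal atom exists (as $\mathcal A (G^*)$ is finite).

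For (c)$\Rightarrow$(a) I would invoke the classification of minimal zero-sum sequences of maximal length. If $G^* = \la g \ra \cong C_n$ is cyclic, then every atom of length $\mathsf D (G^*) = n$ equals $g^{[n]}$ for a generator $g$, so $U \bdot U^{-1} = g^{[n]} \bdot (g^{-1})^{[n]}$; the only atoms dividing it are $g^{[n]}$, $(g^{-1})^{[n]}$, and $g \bdot g^{-1}$, and counting the multiplicities of $g$ and $g^{-1}$ in an arbitrary factorization forces $\mathsf L (U \bdot U^{-1}) = \{2, n\}$. If $G^*$ is an elementary $2$-group, then each maximal atom has the form $U = e_0 \bdot e_1 \bdot \ldots \bdot e_r$, where $\{e_1, \ldots, e_r\}$ is a basis and $e_0 = e_1 \cdots e_r$; since every element is self-inverse, $U^{-1} = U$ and $U \bdot U^{-1} = U^{[2]}$, whose only divisor-atoms are $U$ itself and the pairs $e_i^{[2]}$, again giving $\mathsf L (U \bdot U^{-1}) = \{2, r+1\}$. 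In both cases it is the rigidity of the maximal atoms that rules out intermediate factorization lengths.

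For (b)$\Rightarrow$(c) I would argue contrapositively: if $G^*$ is neither cyclic nor an elementary $2$-group, then in its invariant-factor decomposition the rank is at least $2$ and the exponent is at least $3$, and I would show that \emph{every} maximal atom $U$ admits an intermediate factorization of $U \bdot U^{-1}$, so that $\mathsf L (U \bdot U^{-1}) \supsetneq \{2, \mathsf D (G^*)\}$ and (b) fails. The guiding idea is that such a $U$ must contain two terms $a, b$ with $a \neq b$ and $a \neq b^{-1}$; using these one peels off from $U \bdot U^{-1}$ a minimal zero-sum subsequence of length strictly between $2$ and $\mathsf D (G^*)$, and then completes the remainder by pairs $c \bdot c^{-1}$, producing a factorization whose length lies strictly between $2$ and $\mathsf D (G^*)$.

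The main obstacle is precisely this last step. Unlike the cyclic and elementary-$2$ cases, a group of rank $\ge 2$ and exponent $\ge 3$ admits maximal atoms of many different shapes, and one must produce the intermediate factorization \emph{uniformly} for all of them while verifying minimality of the peeled-off subsequence. This is where the structural results on minimal zero-sum sequences of maximal length from the abelian theory (the inductive analysis underlying \cite{Ge-HK06a}) are needed; by contrast, the cyclic and elementary-$2$ computations of the previous paragraph are routine.
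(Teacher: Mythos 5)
The paper does not prove this lemma at all: it is quoted directly from \cite[Theorem 6.6.3]{Ge-HK06a}, which is exactly your opening line, so your proposal takes the same route as the paper. Your supplementary sketch is also consistent with the surrounding material: the reduction via Lemma \ref{4.2}.3, identifying every $L \in \mathcal L (G^*)$ with $\{2, \mathsf D (G^*)\} \subset L$ as $\mathsf L (U \bdot U^{-1})$ for an atom $U$ of length $\mathsf D (G^*)$, is valid (that lemma is proved for arbitrary finite groups), and your (a)$\Rightarrow$(b) and (c)$\Rightarrow$(a) arguments are correct modulo the standard classification of maximal-length atoms over cyclic and elementary $2$-groups. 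Be aware, however, that the implication (b)$\Rightarrow$(c), which you candidly flag as the main obstacle and leave open, is precisely where the substance of the cited theorem lies --- producing an intermediate factorization length of $U \bdot U^{-1}$ for \emph{every} maximal-length atom $U$ over a group of rank at least $2$ and exponent at least $3$ --- so as a standalone argument your text would be incomplete; but since the paper itself rests entirely on the citation, this does not put you at odds with the paper's treatment.
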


\smallskip
\begin{theorem} \label{4.4}~
Let $G$ and $G^*$ be finite groups and $n \in \N_{\ge 3}$ be odd.
\begin{enumerate}
\item If $|G|$ is odd, then $\{ 2, \mathsf D (G) \} \in \mathcal L (G)$ if and only if $G \cong C_{\mathsf D (G)}$.

\smallskip
\item If $|G|$ is odd and $G^{*}$ is  cyclic such that $\mathcal L (G^{*}) = \mathcal L (G)$, then $G^{*} \cong G$.

\smallskip
\item If $G=D_{2n}$, then $\{2, \mathsf D (G)\} \in \mathcal L (G)$ and there is an $L \in \mathcal L (G)$ with $\{2, \mathsf D (G)\} \subsetneq L$.

\smallskip
\item If $G^*$ is abelian, then $\mathcal L (G^*) \ne \mathcal L (D_{2n})$.
\end{enumerate}
\end{theorem}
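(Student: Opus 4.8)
The plan is to read off the large Davenport constant from the system of sets of lengths and then funnel everything through Lemma \ref{4.3}. The crucial preliminary is the identity $\rho_2(G) = \mathsf D(G)$ for every finite group with $|G| \ge 3$: for ``$\ge$'' take a maximal atom $U$ (so $|U| = \mathsf D(G)$) and apply Lemma \ref{4.2}.3 to $S = U \bdot U^{-1}$, giving $\mathsf D(G) \in \mathcal U_2(G)$; for ``$\le$'' note that any $S$ with $2 \in \mathsf L(S)$ and $1_G \nmid S$ has $|S| \le 2\mathsf D(G)$ and every atomic factorization uses atoms of length $\ge 2$, so $\max \mathsf L(S) \le \mathsf D(G)$ (the case $1_G \mid S$ is handled by splitting off the $1_G$'s). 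Since $\rho_2$ depends only on $\mathcal L(G)$, any equality $\mathcal L(G^*) = \mathcal L(G)$ forces $\mathsf D(G^*) = \mathsf D(G)$; I use this in (1), (2), (4). For (1), if $G$ is abelian and $\{2,\mathsf D(G)\}\in\mathcal L(G)$, Lemma \ref{4.3} gives $G$ cyclic or elementary $2$, and odd order rules out the latter, so $G \cong C_{\mathsf D(G)}$; the converse is Lemma \ref{4.3}, $(c)\Rightarrow(b)$.

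For the non-abelian case of (1) I would argue by contradiction. Assuming $\{2,\mathsf D(G)\}\in\mathcal L(G)$, Lemma \ref{4.2}.3 produces $S = U \bdot U^{-1}$ with $\mathsf L(S) = \{2,\mathsf D(G)\}$ and $|U| = \mathsf D(G)$. Put $H = \la \supp(U) \ra$; since atoms are intrinsic, $U$ is a maximal atom over $H$ and $\mathsf L_{\mathcal B(G)}(S) = \mathsf L_{\mathcal B(H)}(S)$, so the same rigid length set occurs over $H$. If $H$ is abelian, Lemma \ref{4.3} forces $H$ cyclic (odd order), whence $U = g^{[\ord(g)]}$ and $\ord(g) = \mathsf D(G)$; but for a non-cyclic group, taking $g$ of maximal order $m$ and $h \notin \la g \ra$, the sequence $g^{[m-1]} \bdot h$ is product-one free, so $\mathsf D(G) \ge m+1 > \ord(g)$, a contradiction. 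A minimal-counterexample reduction then lets me assume $H = G$, and the remaining task is to find a factorization of $U \bdot U^{-1}$ of length strictly between $2$ and $\mathsf D(G)$. The mechanism is a regrouping identity: if $T \mid U$ is a product-one \emph{atom} of length $m$ and $W = U \bdot T^{-1}$, then
\[
   U \bdot U^{-1} \;=\; T \bdot T^{-1} \bdot \prod_{w \mid W}\bigl(w \bdot w^{-1}\bigr)
\]
factors into $2 + (\mathsf D(G) - m)$ atoms, which is intermediate exactly when $3 \le m \le \mathsf D(G)-1$. Thus (1) reduces to exhibiting a product-one sub-atom of length $\ge 3$ in a maximal atom over a non-abelian odd group; here I would combine Lemma \ref{4.2}.1 (some proper product-one subsequence exists) with the parity hypothesis to exclude the degenerate case where every such subsequence is an inverse pair $g \bdot g^{-1}$. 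Statement (2) is then short: $\mathcal L(G^*)=\mathcal L(G)$ gives $\mathsf D(G^*)=\mathsf D(G)$ by $\rho_2$, a cyclic $G^*$ has $\{2,\mathsf D(G^*)\}\in\mathcal L(G^*)$ by Lemma \ref{4.3}, so $\{2,\mathsf D(G)\}\in\mathcal L(G)$, and (1) yields $G \cong C_{\mathsf D(G)} \cong G^*$.

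For (3) I would first record $\mathsf D(D_{2n}) = n+1$, noting $D_{2n}' = \la a \ra$ has order $n \ge 3$. To obtain $\{2,\mathsf D(G)\}\in\mathcal L(G)$ I must produce a maximal atom $U$ for which $U \bdot U^{-1}$ is \emph{rigid}: by the regrouping principle this means choosing $U$ whose only proper product-one subsequences are inverse pairs, and then certifying by a direct count of solutions that $\mathsf L(U \bdot U^{-1}) = \{2,n+1\}$. For the second assertion I would use the concrete maximal atom $U = a^{[n-1]} \bdot b \bdot a^{n-1}b$, which contains the length-$3$ product-one atom $a \bdot b \bdot a^{n-1}b$; regrouping then yields a factorization of length $n$, so $\{2,\mathsf D(G)\} \subsetneq \mathsf L(U \bdot U^{-1}) =: L \in \mathcal L(G)$. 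Finally (4) is the payoff: if $G^*$ is abelian with $\mathcal L(G^*) = \mathcal L(D_{2n})$, then $\rho_2$ gives $\mathsf D(G^*) = n+1$, the first assertion of (3) gives $\{2,\mathsf D(G^*)\} \in \mathcal L(G^*)$, so Lemma \ref{4.3} $(b)\Rightarrow(a)$ forces every $L \in \mathcal L(G^*)$ containing $\{2,\mathsf D(G^*)\}$ to equal it; this contradicts the second assertion of (3), whose witness $L$ lies in $\mathcal L(D_{2n}) = \mathcal L(G^*)$.

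The hardest part, I expect, is the explicit dihedral analysis in (3), and specifically the construction and certification of a \emph{rigid} maximal atom over $D_{2n}$: maximal atoms over $D_{2n}$ tend to contain short product-one sub-atoms (a single rotation together with two reflections whose exponents differ by the rotation's exponent), which the regrouping principle converts into unwanted intermediate lengths, so the rigid example must be chosen with care and its length set verified by a genuine counting argument. The parallel difficulty in (1) is ruling out the all-inverse-pairs degeneration for a maximal atom over a non-abelian odd group, which is exactly the step where the hypothesis that $|G|$ is odd must be exploited.
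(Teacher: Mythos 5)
Your overall architecture --- recovering $\mathsf D$ from $\mathcal L$ via $\rho_2$, funnelling the abelian cases through Lemma \ref{4.3}, the regrouping identity behind Lemma \ref{4.2}.3, and the derivations of (2) from (1) and of (4) from (3) --- is the same as the paper's, and part (2) together with the abelian half of (1) is correct as you present it. But two essential steps are missing or wrong.

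In part (1) you correctly reduce to showing that a maximal atom $U$ over a non-abelian group of odd order cannot have the property that all of its non-trivial proper product-one subsequences are built from inverse pairs; and then you stop, saying you ``would combine Lemma \ref{4.2}.1 with the parity hypothesis to exclude the degenerate case.'' That combination, applied to $U$ itself, proves nothing: in the degenerate case Lemma \ref{4.2}.1 is already satisfied by an inverse pair $g \bdot g^{-1}$, so no contradiction arises. The paper needs a genuinely new idea here: write $U = g_1^{[r_1]} \bdot (g_1^{-1})^{[r_1']} \bdot \ldots \bdot g_{\ell}^{[r_{\ell}]} \bdot (g_{\ell}^{-1})^{[r_{\ell}']} \bdot T$ with $T$ product-one free, $g_i, g_i^{-1} \notin \supp(T)$, and the exponents maximal; pass to the ``un-inverted'' sequence $W = g_1^{[r_1 + r_1']} \bdot \ldots \bdot g_{\ell}^{[r_{\ell}+r_{\ell}']} \bdot T$, which still has length $\mathsf D (G)$ and satisfies $W \bdot W^{-1} = U \bdot U^{-1}$; then apply Lemma \ref{4.2}.1 to $W$, not to $U$. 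The resulting product-one sub-atom of $W$ must again have length $2$ (otherwise regrouping puts an intermediate length into $\mathsf L (U \bdot U^{-1})$), hence by oddness it is an inverse pair $h \bdot h^{-1}$ with $h, h^{-1} \in \supp(W)$ --- and this contradicts the maximality of the exponents or the choice of $T$, since $\supp(W)$ was constructed to contain no inverse pairs. This second application of Lemma \ref{4.2}.1 to a modified sequence is the heart of the proof of (1), and your proposal contains neither it nor any substitute for it. (Your minimal-counterexample reduction to $\la \supp(U) \ra = G$ is harmless but does not help with this step; the paper does not need it.)

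Part (3) is built on a wrong constant. By \cite{Ge-Gr13a}, cited in the paper, $\mathsf D (D_{2n}) = 2n$; you use $\mathsf D (D_{2n}) = n+1$, which is the small-Davenport formula $\mathsf d (D_{2n}) + 1$ and is strictly smaller here. In consequence your ``maximal atom'' $U = a^{[n-1]} \bdot b \bdot a^{n-1}b$ has length $n+1 < \mathsf D (G)$, and $\mathsf L (U \bdot U^{-1})$ cannot contain $\mathsf D (G) = 2n$ at all: the sequence $U \bdot U^{-1}$ has length $2n+2$ and every atom in any of its factorizations has length at least $2$, so no factorization has more than $n+1$ factors. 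Thus your $L$ witnesses neither $\{2, \mathsf D (G)\} \in \mathcal L (G)$ nor $\{2, \mathsf D (G)\} \subsetneq L$. Your plan for the first assertion of (3) (a ``rigid'' maximal atom certified by a counting argument) is moreover left entirely unexecuted. The paper's actual constructions are: for the first assertion, $G_0 = \{b, ab\}$ has $\mathcal A (G_0) = \{ b^{[2]}, (ab)^{[2]}, S = b^{[n]} \bdot (ab)^{[n]} \}$, whence $\mathsf L (S \bdot S) = \{2, 2n\}$; for the second, $T = a^{[2n-2]} \bdot b^{[2]}$ is an atom of length $2n$ with $\{2, n+2, 2n\} \subset \mathsf L (T \bdot T^{-1})$. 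Since your (4) is deduced from (3) (correctly, and exactly as in the paper), it inherits this gap.
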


\begin{proof}
1. By Lemma \ref{4.3} it suffices to show that $\{ 2, \mathsf D (G) \} \notin \mathcal L (G)$ for non-abelian group $G$ of odd order. Let $G$ be a non-abelian group of odd order, and assume to the contrary that $\{ 2, \mathsf D (G) \} \in \mathcal L (G)$. Then there is a $S \in \mathcal F (G)$ such that $\mathsf L (S) = \{ 2, \mathsf D (G) \}$. Then Lemma \ref{4.2}.3 implies that $S = U\bdot U^{-1}$ for some $U \in \mathcal A (G)$ of length $\mathsf D (G)$. By Lemma \ref{4.2}.1, $U$ has a non-trivial proper product-one subsequence, in particular an atom $V$.
If $|V| \ge 3$, then there exists an $k = \mathsf D (G) - |V| + 2 \in [3, \mathsf D (G)-1]$ such that $k \in \mathsf L (S)$, a contradiction. Thus $|V| = 2$, and it follows that any proper product-one subsequence dividing $U$ is a product of atoms of length $2$.

Since $|G|$ is odd, it follows that there is no element $g \in \supp(U)$ with $\ord(g) = 2$ such that $g\bdot g \t U$.
Then we set
\[
  U = g^{[r_1]}_1 \bdot (g^{-1}_1)^{[r'_1]} \bdot \ldots \bdot g^{[r_{\ell}]}_{\ell} \bdot (g^{-1}_{\ell})^{[r'_{\ell}]} \bdot T \,,
\]
where $r_1, r'_1, \ldots , r_{\ell}, r'_{\ell} \in \N, g_1, \ldots , g_{\ell}$ are distinct, and $T \in \mathcal F (G)$ is product-one free such that $g_i, g^{-1}_i \notin \supp(T)$ for each $i \in [1,\ell]$.
We may assume that $r_1, r'_1, \ldots , r_{\ell}, r'_{\ell}$ are maximal with respect to such expression.
Consider the sequence
\[
  W = g^{[r_1 + r'_1]}_1 \bdot \ldots \bdot g^{[r_{\ell} + r'_{\ell}]}_{\ell} \bdot T \quad \und \quad W^{-1} = (g^{-1}_1)^{[r_1 + r'_1]} \bdot \ldots \bdot (g^{-1}_{\ell})^{[r_{\ell} + r'_{\ell}]} \bdot T^{-1}
\]
of length $\mathsf D (G)$, and hence $U\bdot U^{-1} = W\bdot W^{-1}$.
Applying again Lemma \ref{4.2}.1, $W$ has a non-trivial proper product-one subsequence, in particular an atom $W'$.
As at the start of the proof, we obtain $|W'| = 2$, and it follows that there are $i, j \in [1,\ell]$ with $i \neq j$ such that $g_i = g^{-1}_j$, a contradiction to the maximality of $r_1, r'_1, \ldots , r_{\ell}, r'_{\ell}$.

\smallskip
2. Let $|G|$ be odd and  $G^{*}$ be cyclic with $\mathcal L (G^{*}) = \mathcal L (G)$. Then, by \cite[Proposition 5.6]{Oh18a},
\[
  \mathsf D (G^{*}) = \rho_2 (G^{*}) = \rho_2 (G) = \mathsf D (G) \,,
\]
whence 1. and Lemma \ref{4.3} imply the assertion.

\smallskip
3. We have $\mathsf D (G) = 2n$ by \cite[Theorem 1.1]{Ge-Gr13a}. We set $G_0 = \{ b, ab\}$ and assert that $\mathcal A (G_0) = \{ b^{[2]}, (ab)^{[2]},  S = b^{[n]}\bdot (ab)^{[n]}\}$. We leave this proof to the reader. Since $\mathcal B (G_0)$ has precisely three atoms, it follows immediately that
 $\mathsf L (S\bdot S^{-1}) = \mathsf L (S \bdot S) = \{2, \mathsf D (G)\}$. Furthermore,
\[
T = a^{[2n-2]} \bdot b^{[2]} \in \mathcal A (G) \ \text{(for details see the proof of \cite[Lemma 5.2]{Gr13b})} \,,
\]
and since
\[
T \bdot T^{-1} = a^{[n]} \bdot (a^{-1})^{[n]} \bdot \big( a \bdot a^{-1} \big)^{[n-2]} \bdot b^{[2]} \bdot b^{[2]} \,,
\]
we infer that $\{2, n+2, \mathsf D (G) \} \subset \mathsf L (T \bdot T^{-1})$.

\smallskip
4. Let $G^*$ be a finite abelian group. Lemma \ref{4.3} together with 3. implies that $\mathcal L (D_{2n}) \ne \mathcal L (G^*)$.
\end{proof}

\smallskip
\begin{lemma} \label{4.5}~
\begin{enumerate}
\item For every finite group $G$, we have
      \[
      \mathsf D (G) \le |G| \le 1 + \sum_{k=1}^{\mathsf d (G)} \binom{\mathsf d (G)}{k} k! \,.
      \]

\smallskip
\item For every $N \in \N$, there are only finitely many finite groups $G$ $($up to isomorphism$)$ such that $\mathsf D (G)=N$.

\smallskip
\item For every finite group $G$, there are only finitely many finite group $G^{*}$ $($up to isomorphism$)$ such that $\mathcal L (G^{*}) = \mathcal L (G)$.
\end{enumerate}
\end{lemma}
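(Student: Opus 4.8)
The plan is to prove the three items in order, with (1) supplying the combinatorial estimate that powers (2) and (3). For item (1) the inequality $\mathsf D (G) \le |G|$ is already recorded in Lemma \ref{2.2}.1, so the real content is the upper bound on $|G|$. I would fix a product-one free sequence $S = g_1 \bdot \ldots \bdot g_d \in \mathcal F (G)$ of maximal length $d = \mathsf d (G)$ and invoke Lemma \ref{2.2}.2(b), which gives $\Pi (S) = G \setminus \{1_G\}$. By the definition of $\Pi (S)$, every non-identity element $g \in G$ can thus be written as $g = g_{i_1} \cdots g_{i_k}$ for some $k \in [1,d]$ and some pairwise distinct indices $i_1, \ldots, i_k \in [1,d]$. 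The number of ordered tuples of distinct indices of length $k$ is $d(d-1)\cdots(d-k+1) = \binom{d}{k} k!$, so summing over $k$ bounds the number of non-identity elements:
\[
  |G| - 1 = |\Pi (S)| \,\le\, \sum_{k=1}^{d} \binom{d}{k} k! \,,
\]
which is exactly the claimed inequality.

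For item (2) I would combine (1) with the elementary bound $\mathsf d (G) + 1 \le \mathsf D (G)$ from Lemma \ref{2.2}.2(a). If $\mathsf D (G) = N$, then $\mathsf d (G) \le N-1$, and since $f(d) = 1 + \sum_{k=1}^{d} \binom{d}{k} k!$ is strictly increasing in $d$, part (1) yields $|G| \le f(\mathsf d (G)) \le f(N-1)$. Thus every finite group with $\mathsf D (G) = N$ has order bounded by an explicit constant depending only on $N$; as there are only finitely many isomorphism types of groups of bounded order, item (2) follows at once.

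The bridge to item (3) is the observation that the system of sets of lengths already determines the large Davenport constant. The invariant $\rho_2 (G) = \sup \mathcal U_2 (G)$ is, by definition, read off from $\mathcal L (G)$, so $\mathcal L (G^*) = \mathcal L (G)$ immediately forces $\rho_2 (G^*) = \rho_2 (G)$. Invoking the identity $\rho_2 (H) = \mathsf D (H)$, valid for every finite group $H$ (\cite[Proposition 5.6]{Oh18a}, as already used in the proof of Theorem \ref{4.4}.2), we obtain
\[
  \mathsf D (G^*) = \rho_2 (G^*) = \rho_2 (G) = \mathsf D (G) \,.
\]
Hence any $G^*$ with $\mathcal L (G^*) = \mathcal L (G)$ satisfies $\mathsf D (G^*) = \mathsf D (G)$, and item (2) confines such $G^*$ to finitely many isomorphism types, proving (3).

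I expect the only point requiring care to be the counting in (1): the assignment sending an ordered tuple of distinct indices to the corresponding product in $G$ need not be injective, so $\sum_{k=1}^{d} \binom{d}{k} k!$ is genuinely an over-estimate of $|G|-1$ rather than an exact count. This causes no difficulty, since only an upper bound is needed, and in particular no lower bound on the sizes $|\pi (T)|$ is required. The remaining ingredients are entirely routine: the monotonicity of $f$ for (2), and for (3) the fact that $\rho_2$ is a function of $\mathcal L$ together with the cited identity $\rho_2 = \mathsf D$.
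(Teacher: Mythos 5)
Your proposal is correct and follows essentially the same route as the paper: item (1) via Lemma \ref{2.2} and the bound $|\Pi(S)| \le \sum_{k=1}^{d}\binom{d}{k}k!$ for a maximal product-one free sequence, item (2) by reducing to the finiteness of groups of bounded order (you make explicit the implicit steps $\mathsf d(G) \le \mathsf D(G)-1$ and the monotonicity of the bound), and item (3) via $\mathsf D(G^*)=\rho_2(G^*)=\rho_2(G)=\mathsf D(G)$ from \cite[Proposition 5.6]{Oh18a}.
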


\begin{proof}
1. The left inequality follows from Lemma \ref{2.2}.1.  Consider a product-one free sequence $S \in \mathcal F (G)$ of length $|S| = \mathsf d (G)$.
Then $\Pi(S) = G \setminus \{1_G\}$ by Lemma \ref{2.2}.2 whence it follows  that
\[
   |G| \,\, = \,\, 1 + |\Pi(S)| = 1 + \Big| \bigcup_{1 \ne T\t S} \pi (T) \Big|  \le 1 + \sum_{k=1}^{\mathsf d (G)} \binom{\mathsf d (G)}{k} k! \,.
\]

\smallskip
2. It is well-known that for every $M \in \N$ there are only finitely many finite groups $G$ (up to isomorphism) with $|G| \le M$. Thus the assertion follows from 1.

\smallskip
3. Let $G$ be a finite group. If $G^{*}$ is any finite group with $\mathcal L (G^{*}) = \mathcal L (G)$, then by \cite[Proposition 5.6]{Oh18a},
\[
   \mathsf D (G^{*}) = \rho_2 (G^{*}) = \rho_2 (G) = \mathsf D (G) \,.
\]
Thus the assertion follows from 2.
\end{proof}

\smallskip
\begin{lemma} \label{4.6}~
\begin{enumerate}
\item If $G$ is a finite  group with $|G| \ge 32$, then either $G \cong C_2^5$ and $\mathsf d (G)=5$ or else $\mathsf d (G) \ge 6$.

\smallskip
\item Every group $G$ with $\mathsf D (G)=6$ is isomorphic to one of the following groups{\rm \,:}
      \[
        C_6, \quad C_2 \oplus C_2 \oplus C_4, \quad C_2^5, \quad D_6, \quad Q_8,  \quad D_8 \,.
      \]
\end{enumerate}
Moreover, all finite groups $G$ with $\mathsf D (G) \le 5$ are abelian.
\end{lemma}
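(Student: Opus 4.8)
\emph{Setup and tools.} The plan is to translate all three assertions into statements about the small Davenport constant $\mathsf d (G)$, using $\mathsf d (G) + 1 \le \mathsf D (G)$ (Lemma \ref{2.2}.2) throughout. First I would record that $g^{[\ord (g) - 1]}$ is product-one free, so $\mathsf d (G) \ge \ord (g) - 1$ for every $g \in G$ and hence $\exp (G) \le \mathsf d (G) + 1$. Next I would establish three monotonicity properties: $\mathsf d (H) \le \mathsf d (G)$ for every subgroup $H \le G$ (a product-one free sequence over $H$ stays product-one free over $G$); $\mathsf d (G/N) \le \mathsf d (G)$ for every $N \triangleleft G$ (lift a product-one free sequence over $G/N$ and observe its subsequence products avoid $N$); and the superadditivity $\mathsf d (G) \ge \mathsf d (N) + \mathsf d (G/N)$, obtained by juxtaposing a product-one free sequence over $N$ with a lift of one over $G/N$ and projecting any hypothetical product-one subsequence down to $G/N$.

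\emph{The key bound.} The engine of the proof is the inequality $|G| \le 2^{\mathsf d (G)}$, with equality exactly for $G \cong C_2^{\mathsf d (G)}$. For abelian $G$ this is immediate from Lemma \ref{2.2}.2: a product-one free sequence $S$ of length $\mathsf d (G)$ has $\Pi (S) = G \setminus \{1_G\}$, and since each $\pi (T)$ is then a singleton while $S$ has at most $2^{|S|}$ submultisets $T$, one gets $|G| = 1 + |\Pi (S)| \le 2^{\mathsf d (G)}$. For general $G$ I would iterate the superadditivity along a composition series $1 \triangleleft G_1 \triangleleft \cdots \triangleleft G_n = G$ to obtain $\mathsf d (G) \ge \sum_{i} \mathsf d (G_i / G_{i-1})$; because $\mathsf d (C_p) = p - 1 \ge \log_2 p$ for every prime $p$, each cyclic factor contributes at least $\log_2$ of its order, so for solvable $G$ the bound follows, and equality forces every factor to be $C_2$. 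Non-solvable $G$ never enter the relevant range: such a $G$ has a composition factor isomorphic to a non-abelian simple group $T$ (smallest $A_5$), whence $\mathsf d (G) \ge \mathsf d (T) \ge 6$ by monotonicity.

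\emph{Deducing the statements.} For the first assertion, if $|G| \ge 32$ and $\mathsf d (G) \le 5$ then $|G| \le 2^5 = 32$, so $|G| = 32 = 2^{\mathsf d (G)}$ and the equality case gives $G \cong C_2^5$ (with $\mathsf d (G) = 5$); otherwise $\mathsf d (G) \ge 6$. For the final (``Moreover'') claim, $\mathsf D (G) \le 5$ forces $\mathsf d (G) \le 4$, hence $|G| \le 16$; I would then run over the finitely many groups of order at most $16$ (finiteness being guaranteed by Lemma \ref{4.5}), computing $\mathsf D$ via the lower bound $\mathsf D (G) \ge 1 + \sum_i (n_i - 1)$ over the invariant factors for abelian $G$ and via \cite{Ge-Gr13a} (groups with a cyclic index-$2$ subgroup, covering the dihedral and dicyclic groups, including $Q_8 = Dic_8$) for the non-abelian ones, to see that every non-abelian group of order at most $16$ has $\mathsf D (G) \ge 6$. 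For the classification with $\mathsf D (G) = 6$, the condition gives $\mathsf d (G) \le 5$ and $|G| \le 32$; the case $|G| = 32$ is settled by the first assertion ($G \cong C_2^5$), and for $|G| < 32$ the same enumeration, discarding abelian groups with $1 + \sum_i (n_i - 1) \ne 6$ and those non-abelian groups whose dihedral, dicyclic or quaternion Davenport constant differs from $6$, leaves precisely $C_6$, $C_2 \oplus C_2 \oplus C_4$, $D_6$, $Q_8$, and $D_8$.

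\emph{Main obstacle.} The hard part is the non-abelian input feeding both the bound and the finite lists. I expect three points to require genuine work: proving $\mathsf d (T) \ge 6$ for the relevant non-abelian simple groups, so as to discard non-solvable $G$; verifying that every non-abelian $2$-group of order $32$ other than $C_2^5$ has $\mathsf d (G) \ge 6$, since at $|G| = 32$ superadditivity alone only yields $\mathsf d (G) \ge 5$ and the equality case of the bound is therefore not formal; and pinning down $\mathsf D$ for the remaining small non-abelian groups such as $A_4$ and $Dic_{12}$. Each of these reduces to exhibiting an explicit product-one free sequence of the right length, or an atom of length $6$; I would quote the dihedral and dicyclic computations of \cite{Ge-Gr13a} wherever possible and treat the residual $2$-group cases by a direct finite check.
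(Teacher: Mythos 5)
Your organizing idea --- prove $|G| \le 2^{\mathsf d (G)}$ by iterating the superadditivity $\mathsf d (G) \ge \mathsf d (N) + \mathsf d (G/N)$ along a composition series and then read off all three assertions --- is genuinely different from the paper's proof, which is a long case analysis (elements of order $\ge 6$, abelian groups, $p$-groups, non-$p$-groups) anchored on the table of Davenport constants of all groups of order $<32$ from \cite{Cz-Do-Sz17} and on Cato's list \cite{Ca1977} of orders of simple groups. The solvable half of your bound is correct and clean. But the non-solvable step has a genuine gap, and it is the fatal one. You write ``$\mathsf d (G) \ge \mathsf d (T) \ge 6$ by monotonicity,'' where $T$ is a non-abelian simple composition factor, justified only by the parenthetical ``smallest $A_5$.'' Monotonicity for sections does give $\mathsf d (G) \ge \mathsf d (T)$, but nothing gives $\mathsf d (T) \ge \mathsf d (A_5)$: the small Davenport constant is not monotone in the group order, and a composition factor of $G$ need not contain $A_5$. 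So you need $\mathsf d (T) \ge 6$ for \emph{every} non-abelian finite simple group $T$ --- an infinite family --- and your proposed remedy, ``exhibiting an explicit product-one free sequence,'' cannot settle infinitely many groups. The missing idea is a reduction to finitely many $T$: if $\mathsf d (T) \le 5$ then all element orders are at most $6$, so $T$ is a simple group whose order is divisible only by $2,3,5$; one then needs a classification-type input (Herzog's theorem on simple groups with exactly three prime divisors, or the paper's route of first bounding the order and then invoking \cite{Ca1977}) to conclude $T \cong A_5$ or $A_6$, after which $D_{10} \subset A_5$ and \emph{strict} monotonicity for proper subgroups (\cite[Corollary 5.7]{Gr13b}; you only stated the non-strict version, which yields just $\mathsf d (A_5) \ge \mathsf d(D_{10}) = 5$) finish. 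Without some such input the argument does not close.

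The two gaps you do flag are also heavier than ``a direct finite check'' suggests, and filling them reproduces exactly the machinery the paper leans on. First, the equality case at $|G| = 32$: you must show that all $44$ non-abelian groups of order $32$ have $\mathsf d (G) \ge 6$; the paper gets this from strict subgroup monotonicity together with the fact (from the \cite{Cz-Do-Sz17} table) that every group of order $16$ other than $C_2^4$ has $\mathsf d \ge 5$ --- and note the subtlety that non-abelian groups of order $32$ can contain $C_2^4$ (e.g.\ $D_8 \times C_2 \times C_2$), with $\mathsf d (C_2^4) = 4$, so one must argue that some order-$16$ subgroup is \emph{not} elementary abelian. Second, your enumerations of groups of order $\le 16$ and $<32$ rely on \cite{Ge-Gr13a}, which covers only groups with a cyclic subgroup of index $2$; this misses $A_4$, the extraspecial groups of order $27$, and most non-abelian groups of order $16$ and $24$ (e.g.\ $D_8 \times C_2$, $C_2^2 \rtimes C_4$), which is precisely why the paper's proof of part 2 and of the ``Moreover'' clause quotes the complete table of \cite{Cz-Do-Sz17}. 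Once you import that table and the simple-group classification input, your skeleton becomes a correct proof; but at that point it is doing the same work as the paper's argument, with the bound $|G| \le 2^{\mathsf d (G)}$ replacing only the paper's bookkeeping for abelian groups and Sylow subgroup orders.
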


\begin{proof}
Suppose that $G$ is abelian, say $G \cong C_{n_1} \oplus \ldots \oplus C_{n_r}$ with $1 < n_1 \mid \ldots \mid n_r$. We use frequently that $\sum_{i=1}^r (n_i-1) \le \mathsf d (G)=\mathsf D (G)-1$ and that equality holds for $p$-groups and in case $r \le 2$ (see \cite[Chapter 5]{Ge-HK06a}).

1. Let $G$ be a finite group with $|G| \ge 32$. If there is an element $g \in G$ with $\ord (g)\ge 7$, then $g^{[6]}$ is product-one free whence
\[
\mathsf d (G) \ge |g^{[6]}| = 6 \,.
\]
Suppose that $\ord (g') \le 6$ for all $g' \in G$ and that there is an element $g \in G$ with $\ord (g)=6$. Since $|G|\ge 32$, it follows that $\langle g \rangle \subsetneq G$ whence there is some $h \in G \setminus \langle g \rangle$. Then $g^{[5]} \bdot h$ is product-one free, and we assume from now on that all group elements have order at most five.

First we suppose that $G$ is abelian, say $G \cong C_{n_1} \oplus \ldots \oplus C_{n_r}$ with $1 < n_1 \t \ldots \t n_r$.
If $n_r=2$, then $\mathsf d (G)=r$ whence either $r=5$ and $G \cong C_2^5$ or else $\mathsf d (G)=r \ge 6$. If $n_r \in [3,5]$, then $\mathsf d (G) \ge \sum_{i=1}^r (n_i-1)$ quickly implies that assertion.

Suppose that $G$ is non-abelian. If all elements of $G$ would have order two, then $G$ would be abelian. Furthermore, since all groups of order $p^2$, for a prime $p$, are abelian, we suppose that $|G|$ is not the square of a prime.
Thus we may assume that all group elements have order at most five and $G$ has an element $g$ with $\ord (g) \in [3,5]$. Furthermore,  \cite[Proposition 3.9.1]{Cz-Do-Ge16a} shows us that for every normal subgroup $N \triangleleft G$ we have
\begin{equation} \label{normal-subgroup}
\mathsf d (N) + \mathsf d (G/N)\le \mathsf d (G) \,.
\end{equation}

\medskip
\noindent CASE 1{\rm \,:} \, $G$ is a $p$-group.

If $p=2$, then, by Sylow's Theorem, $G$ has a proper subgroup $H \subsetneq G$ with $|H|=16$. By the table given in \cite{Cz-Do-Sz17}, we obtain that $5 \le \mathsf d (H) < \mathsf d (G)$.
If $p=3$, then $G$ has a subgroup $H$ with $|H|=27$ and the table given in \cite{Cz-Do-Sz17} shows that $\mathsf d (H) \ge 6$. If $p=5$, then $G$ has a subgroup $H$ with $|H|=25$ whence $H$ is abelian and $\mathsf d (H)\ge 8$.

\medskip
\noindent CASE 2{\rm \,:} \, $G$ is not a $p$-group.

Then $|G| = 2^{i}3^{j}5^{k}$ is not a prime power, where $i \in [0, 4]$, $j \in [0, 2]$, and $k \in [0, 1]$. Suppose that $G$ is simple. Then \cite{Ca1977} implies that $G$ is isomorphic either to  $A_5$ or to $A_6$. Since $D_{10}$ is a proper subgroup of both $A_5$ and $A_6$,  \cite[Corollary 5.7]{Gr13b} implies that
\[
  5 = \mathsf d (D_{10}) < \mathsf d (G) \,.
\]
From now on we suppose that $G$ is not simple, and we denote by $Syl_p (G)$ a Sylow $p$-subgroup of $G$ for a prime number $p$.

\medskip
\noindent CASE 2.1{\rm \,:} \, $k=0 \und j = 1$.

Then  we only have the case $i=4$. Then, by Sylow's Theorem, we obtain that either $Syl_2 (G)$ or $Syl_3 (G)$ is normal subgroup of $G$. If $N = Syl_2 (G)$ is normal subgroup, then $|N| = 16$, and hence $4 \le \mathsf d (N)$ by the table given in \cite{Cz-Do-Sz17}. It follows that
\[
  6 \le \mathsf d (N) + \mathsf d (G/N) \le \mathsf d (G) \,.
\]
If $Syl_3 (G)$ is normal subgroup, then we obtain the same result by symmetry.

\medskip
\noindent CASE 2.2{\rm \,:} \, $k=0 \und j=2$.

If $i = 2$, then it is well known that either $Syl_2 (G)$ or $Syl_3 (G)$ is a normal subgroup of $G$ (see, \cite[Exercise 6.2.18]{Du-Fo04}).
By letting $N = Syl_2 (G)$ or $N = Syl_3 (G)$, we obtain in any cases
\[
  6 \le \mathsf d (N) + \mathsf d (G/N) \le \mathsf d (G) \,.
\]
Suppose that $i=3$ and $N$ is a non-trivial proper normal subgroup of $G$. Consider all possible value of the pair
\[
  \big( |N|, |G/N| \big) \quad \in \quad \big\{ (2,36), \,\, (4,18), \,\, (6,12), \,\, (8,9), \,\, (9,8), \,\, (12,6), \,\, (18,4), \,\, (36,2) \big\} \,.
\]
Applying the table given in \cite{Cz-Do-Sz17} with the case $i=2$, we obtain that
\[
  7 \le \mathsf d (N) + \mathsf d (G/N) \le \mathsf d (G) \,.
\]
For $i=4$, let $N$ be a non-trivial proper normal subgroup of $G$ and consider all possible value of the pair
\[
  \begin{aligned}
  \big( |N|, |G/N| \big) \quad \in  & \quad \big\{ (2,72), \,\, (3,48), \,\, (4,36), \,\, (6,24), \,\, (8,18), \,\, (9,16), \,\, (12,12),\\
                                    & \quad \ \  (16,9), \,\, (18,8), \,\, (24,6), \,\, (36,4), \,\, (48,3), \,\, (72,2) \big\} \,.
  \end{aligned}
\]
Applying the table given in \cite{Cz-Do-Sz17} with the case $i = 2,3$ and CASE 4.1, we obtain that
\[
  8 \le \mathsf d (N) + \mathsf d (G/N) \le \mathsf d (G) \,.
\]

\medskip
\noindent CASE 2.3{\rm \,:} \, $k=1 \und j=0$.

By the same line of CASE 2.2, it is enough to verify the case $i=3$ and hence we assume that $|G| = 40$.
Then, by Sylow's Theorem, $N = Syl_5 (G)$ is normal subgroup of $G$. Hence $G/N$ is isomorphic to one of the following groups{\rm \,:}
\[
  C^{3}_2, \quad C_2 \oplus C_4, \quad C_8, \quad D_8, \quad Q_8 \,.
\]
Thus we obtain
\[
  7 \le \mathsf d (N) + \mathsf d (G/N) \le \mathsf d (G) \,.
\]

\medskip
\noindent CASE 2.4{\rm \,:} \, $k=1 \und j=1$.

By the same line of CASE 2.2, it is enough to verify the case $i=2$ and hence we assume that $|G| = 60$. Since $G$ is not simple group, we obtain that $N = Syl_5 (G)$ is normal subgroup of $G$ by \cite[Proposition 21]{Du-Fo04}. Hence $G/N$ is isomorphic to one of the following groups{\rm \,:}
\[
  C_2 \oplus C_6, \quad C_{12}, \quad A_4, \quad D_{12}, \quad Dic_{12} \,.
\]
Thus we obtain
\[
  8 \le \mathsf d (N) + \mathsf d (G/N) \le \mathsf d (G) \,.
\]

\medskip
\noindent CASE 2.5{\rm \,:} \, $k=1 \und j=2$.

By the same line of CASE 2.2, it is enough to verify the case $i=0$ and hence we assume that $|G| = 45$.
Then, by Sylow's Theorem, $N = Syl_3 (G)$ is normal subgroup of $G$. Hence $G/N$ is isomorphic to $C_5$. Thus we obtain
\[
  8 = \mathsf d (N) + \mathsf d (G/N) \le \mathsf d (G) \,.
\]

\medskip
2. Let $G$ be a finite  group with $\mathsf D (G)=6$. If $G$ is abelian,  say $G \cong C_{n_1} \oplus \ldots \oplus C_{n_r}$ with $1 < n_1 \mid \ldots \mid n_r$, then
\[
1 + \sum_{i=1}^r (n_i-1) \le \mathsf D (G) = 6 \,,
\]
implies  that $G$ is isomorphic to one of groups  in the list.
Suppose that $G$ is non-abelian. Then 1. implies that   $|G| \le 32$. Now the table given in \cite{Cz-Do-Sz17} shows that $G$ is isomorphic either to $D_6$, or to $Q_8$, or to $D_8$.
\end{proof}

\smallskip
It is easy to write down explicitly the system $\mathcal L (G)$  for groups $G$ with $\mathsf D (G) \le 4$ (\cite[Section 7.3]{Ge-HK06a}). However, it turned out that the explicit description of $\mathcal L (G)$ for groups with $\mathsf D (G)=5$ is extremely complex (\cite[Section 4]{Ge-Sc-Zh17b}), to the extent that no system $\mathcal L (G)$ has been  written down completely. Nevertheless,  we can show that for a group $G$ with Davenport constant $\mathsf D (G)=6$ its system of sets of lengths is characteristic.

\smallskip
\begin{theorem} \label{4.7}~
Let $G$ be a finite group with $\mathsf D (G) = 6$. If $G^{*}$ is a finite group with $\mathcal L (G^{*}) = \mathcal L (G)$, then $G^{*} \cong G$.
\end{theorem}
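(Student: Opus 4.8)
The plan is to combine the reduction supplied by Lemmas \ref{4.5} and \ref{4.6} with a compact package of invariants that can be read off from the system of sets of lengths. First I would note that by Lemma \ref{4.5}.3 only finitely many groups $G^{*}$ satisfy $\mathcal L(G^{*}) = \mathcal L(G)$, and that by \cite[Proposition 5.6]{Oh18a} any such $G^{*}$ obeys $\mathsf D(G^{*}) = \rho_2(G^{*}) = \rho_2(G) = \mathsf D(G) = 6$. Hence Lemma \ref{4.6}.2 forces both $G$ and $G^{*}$ into the list $C_6,\ C_2 \oplus C_2 \oplus C_4,\ C_2^5,\ D_6,\ Q_8,\ D_8$, and the theorem reduces to showing that these six groups have pairwise distinct systems of sets of lengths.

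The central discriminating invariant will be the presence in $\mathcal L(G)$ of the set $\{2,\mathsf D(G)\} = \{2,6\}$ and, when present, its maximality with respect to inclusion. Here Lemma \ref{4.2}.3 is the key tool, since it pins down exactly which sequences realize a set of lengths containing $\{2,6\}$, namely the products $U \bdot U^{-1}$ with $U \in \mathcal A(G)$ of length $6$. For the three abelian groups Lemma \ref{4.3} decides everything: $\{2,6\} \in \mathcal L(G)$ holds precisely for the cyclic group $C_6$ and the elementary $2$-group $C_2^5$, and in both cases $\{2,6\}$ is an inclusion-maximal element, whereas $\{2,6\} \notin \mathcal L(C_2 \oplus C_2 \oplus C_4)$. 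For $D_6$, Theorem \ref{4.4}.3 gives $\{2,6\} \in \mathcal L(D_6)$ together with a strictly larger $L \supsetneq \{2,6\}$, so $\{2,6\}$ is not maximal. This already separates $C_2 \oplus C_2 \oplus C_4$ and $D_6$ from one another and from $C_6, C_2^5$, and leaves the placement of $Q_8$ and $D_8$, the separation of $C_6$ from $C_2^5$, and any residual collision of signatures.

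For $Q_8$ and $D_8$ I would determine the $\{2,6\}$-signature from the atom structure via Lemma \ref{4.2}.1: over a non-abelian group every length-$6$ atom $U$ has a proper non-trivial product-one subsequence, hence a proper atom-subsequence $V$, and if $|V| \ge 3$ then $U \bdot U^{-1}$ acquires the intermediate length $\mathsf D(G) - |V| + 2$, forcing $\mathsf L(U \bdot U^{-1}) \supsetneq \{2,6\}$. In $Q_8$ the unique involution is $-E$ and long atoms are pushed to contain a length-$\ge 3$ atom; for instance $I^{[4]} \in \mathcal A(Q_8)$, so that $U = I^{[4]} \bdot J^{[2]} \in \mathcal A(Q_8)$ (see \cite[Example 4.1]{Oh18a}) yields $4 \in \mathsf L(U \bdot U^{-1})$, and one checks that no length-$6$ atom has all of its proper product-one subsequences of length $2$. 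In $D_8$, by contrast, the Klein four subgroup $\langle a^2, b\rangle$ and the several reflections provide length-$6$ atoms whose product-one subsequences behave differently, which separates $D_8$ from $Q_8$; any remaining coincidence of signatures I would break by exhibiting an explicit set of the shape $\mathsf L(U \bdot U^{-1})$ realizable over one group but not the other, exploiting that $Q_8$ has a single involution while $D_8$ contains a copy of $C_2 \oplus C_2$. Finally, the two abelian groups $C_6$ and $C_2^5$ share the same $\{2,6\}$-signature, and I would separate them by invoking the Characterization Problem for abelian groups of Davenport constant $6$ (cf. \cite{Ge-Sc16a, Ge-Zh17b, Zh17a, Zh19a}), since $\mathsf D(C_6) = \mathsf D(C_2^5) = 6$ lies outside the two known exceptional pairings.

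The main obstacle is the fine analysis of $Q_8$ and $D_8$. As the discussion preceding the theorem stresses, the full systems $\mathcal L(G)$ for $\mathsf D(G) = 6$ are extremely complex, so the argument cannot compute them outright and must instead certify a handful of discriminating sets of lengths. Controlling exactly which intermediate lengths occur in $\mathsf L(U \bdot U^{-1})$ for all length-$6$ atoms of these two groups — and thereby guaranteeing that a proposed separating set genuinely fails to appear over one of them — is the delicate technical point, as is the separation of $C_6$ from $C_2^5$, which is a bona fide instance of the Characterization Problem rather than a formal consequence of the invariants used for the remaining groups.
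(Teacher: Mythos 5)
Your skeleton coincides with the paper's and that part is sound: Lemma \ref{4.5}.3 and \cite[Proposition 5.6]{Oh18a} give $\mathsf D (G^{*}) = \rho_2 (G^{*}) = \rho_2 (G) = 6$, Lemma \ref{4.6}.2 produces the six-group list, and your $\{2,6\}$-signature (presence plus inclusion-maximality, via Lemma \ref{4.2}.3, Lemma \ref{4.3} and Theorem \ref{4.4}.3/4) correctly isolates $D_6$ (present, non-maximal) and the pair $C_6, C_2^5$ (present, maximal). Deferring $C_6$ versus $C_2^5$ to the abelian theory is also harmless: all abelian pairs with Davenport constant six are settled by \cite[Theorem 7.3.3]{Ge-HK06a}, which is what the paper cites, so this is not an open instance of the Characterization Problem. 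One small imprecision: what the intermediate-length argument needs is that every length-$6$ atom contains a proper \emph{sub-atom} of length $\ge 3$, not merely a proper product-one subsequence of length $\ne 2$; in the non-abelian setting an atom may properly contain product-one subsequences (their complements just fail to be product-one), so these conditions are not interchangeable, although both hold for $Q_8$ and $D_8$ once their length-$6$ atoms are listed.

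The genuine gap is the pairwise separation of the three groups whose signature reads ``$\{2,6\}$ absent'', namely $C_2 \oplus C_2 \oplus C_4$, $Q_8$ and $D_8$; this is the technical core of the paper's proof (its Cases 2 and 3) and your proposal does not carry it out. Worse, the one concrete mechanism you suggest cannot work: sets of the shape $\mathsf L (U \bdot U^{-1})$ with $|U| = 6$ do not distinguish $Q_8$ from $D_8$, because in both groups every length-$6$ atom satisfies $\mathsf L (U \bdot U^{-1}) = \{2,3,4,6\}$ --- for $U = g_1^{[4]} \bdot g_2^{[2]}$ over $Q_8$ one has, e.g., the length-$3$ factorization $\big( g_1^{[2]} \bdot g_2^{[2]} \big) \bdot \big( g_1^{[2]} \bdot (-g_2)^{[2]} \big) \bdot \big( (-g_1)^{[4]} \big)$, matching the behavior of $a^{[4]} \bdot b \bdot (a^2 b)$ over $D_8$. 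The paper's separating set is instead $\{2,5\}$, which is realized over $D_8$ by a length-\emph{five} atom ($S = a^{2} \bdot b^{[2]} \bdot (ab)^{[2]}$, with $\mathsf L (S \bdot S^{-1}) = \{2,5\}$) and over $C_2^2 \oplus C_4$ by \cite[Proposition 3.8]{Ge-Zh15b}, but satisfies $\{2,5\} \notin \mathcal L (Q_8)$; proving this non-membership requires the explicit list of length-$5$ atoms of $Q_8$ and a case analysis over \emph{all} two-atom products $U \bdot V$ --- restricting to products $U \bdot U^{-1}$, as your plan does, is insufficient, since a set of lengths containing $2$ need only come from some $U \bdot V$. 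Finally, $D_8$ versus $C_2^2 \oplus C_4$ requires yet another certificate: $\{2,3,6\} \subset \mathsf L (S \bdot S^{-1})$ for the length-$6$ atom $S = a^{[4]} \bdot b \bdot (a^2 b)$ of $D_8$, while no $L \in \mathcal L (C_2^2 \oplus C_4)$ contains $\{2,3,6\}$ by \cite[Proposition 4.14]{Sc16a}. None of these ingredients appears in your proposal --- you yourself flag them as the unresolved ``delicate technical point'' --- so the argument is incomplete precisely where the theorem's difficulty lies.
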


\begin{proof}
Let $G^{*}$ be a finite group with $\mathcal L (G^{*}) = \mathcal L (G)$. By \cite[Proposition 5.6]{Oh18a}, we infer that
\[
  \mathsf D (G^{*})= \rho_2 (G^{*}) = \rho_2 (G) = \mathsf D (G) = 6 \,.
\]
Lemma \ref{4.6} provides all groups having Davenport constant six.  Thus it remains to show that  the systems of sets of lengths of any two of them are distinct.
If $G$ and $G^*$ are both abelian, then $\mathcal L (G) \ne \mathcal L (G^*)$ by \cite[Theorem 7.3.3]{Ge-HK06a}. Thus it remains to consider the case where $G \in \{D_6, Q_8, D_8\}$. 

In the following three cases, we list all minimal product-one sequences of certain length. This can be done by straightforward but very tedious case distinctions or by computer (indeed, we rechecked the given lists with the help of Mathematica).

\smallskip
\noindent CASE 1{\rm \,:} \, $G=D_6$.

By Theorem \ref{4.4}.4, $G^*$ cannot be abelian. Hence it remains to verify that $\mathcal L (D_6) \ne \mathcal L (Q_8)$ and $\mathcal L (D_6) \ne \mathcal L (D_8)$. Observe that
\begin{itemize}
\item $\mathcal B (D_8)$ has precisely four atoms of length $6$, namely
      \[
        a^{[4]}\bdot b \bdot (a^{2}b), \quad (a^{3})^{[4]}\bdot b\bdot (a^{2}b), \quad a^{[4]}\bdot ab\bdot (a^{3}b), \quad (a^{3})^{[4]}\bdot ab\bdot (a^{3}b) \,.
      \]

\smallskip
\item The atoms of $\mathcal B (Q_8)$ of length $6$ have the following form{\rm \,:}
      \[
        g_{1}^{[4]}\bdot g_{2}^{[2]}\,, \quad \mbox{ where } \,\, g_1, g_2 \in \{ I, J, K ,-I, -J, -K \} \,\, \mbox{ with } \,\, g_2 \neq \pm g_1 \,.
      \]
\end{itemize}
It follows that $\{2,6\} \notin \mathcal L (D_8)$ and $\{2,6\} \notin \mathcal L (Q_8)$. Thus $\mathcal L (Q_8) \ne \mathcal L (D_6)$ and $\mathcal L (D_8) \ne \mathcal L (D_6)$ by Theorem \ref{4.4}.3.

\smallskip
\noindent CASE 2{\rm \,:} \, $G=Q_8$.

Since $\{2,6\} \notin \mathcal L (Q_8)$, Lemma \ref{4.3} implies that it suffices to show that $\mathcal L (C^{2}_{2} \oplus C_4) \neq \mathcal L (Q_8)$ and $\mathcal L (D_8) \neq \mathcal L (Q_8)$.
To do so we recall that  $\{ 2, 5 \} \in \mathcal L (C^{2}_{2} \oplus C_4)$ by \cite[Proposition 3.8]{Ge-Zh15b}, and for
$S = a^{2} \bdot b\bdot b\bdot ab \bdot ab \in \mathcal B (D_8)$, we have that $S \in \mathcal A (D_8)$ and 
\[
 \{2, 5\} = \mathsf L (S\bdot S^{-1}) \in \mathcal L (D_8) \,.
\]
Therefore it is sufficient to verify  that $\{ 2, 5 \} \notin \mathcal L (Q_8)$. Assume to the contrary that there are $U, V \in \mathcal A (Q_8)$ such that $\{2, 5\} = \mathsf L (U\bdot V)$. Observe that any atom of length $5$ has one of the following three forms{\rm \,:}
\begin{itemize}
\item $g_1^{[3]}\bdot g_2\bdot g_3$, \, where $g_1, g_2, g_3 \in \{ I, J, K ,-I, -J, -K \}$ with $g_2 \neq g_3$ and $g_2, g_3 \neq \pm g_1$.

\smallskip
\item $g_1^{[2]}\bdot g_2^{[2]}\bdot (-E)$, \, where $g_1, g_2 \in \{ I, J, K ,-I, -J, -K \}$ with $g_2 \neq \pm g_1$.

\smallskip
\item $g_1\bdot (-g_1)\bdot g_2\bdot (-g_2)\bdot (-E)$, \, where $g_1, g_2 \in \{ I, J, K \}$ with $g_1 \neq g_2$.
\end{itemize}

\smallskip
\noindent CASE 2.1{\rm \,:} \, $|U| = |V| = 5$.

Then $V = U^{-1}$, and hence we obtain that $4 \in \mathsf L (U\bdot U^{-1})$, a contradiction.

\smallskip
\noindent CASE 2.2{\rm \,:} \, $|U| = 6$ or $|V| = 6$.

Without loss of generality, we may assume that $|U| = 6$ and we set $U = g^{[4]}_1 \bdot g^{[2]}_2$ for some $g_1, g_2 \in \{ I, J, K ,-I, -J, -K \}$ with $g_2 \neq \pm g_1$.
If $|V| = 5$, then $U\bdot V$ has a factorization of product of four atoms of length $2$ and one atom of length $3$. It follows that
\[
  V = (-g_1)^{[3]}\bdot (-g_2)\bdot g_3 \quad \mbox{ or } \quad V = (-g_1)^{[2]} \bdot (-g_2)^{[2]} \bdot (-E) \,,
\]
where $g_3 \in \{ K, -K \}$. Hence we obtain that
\[
  U\bdot V = \Big(g_1 \bdot (-g_1)\Big)^{[2]} \bdot \Big( (g_1)^{[2]}\bdot (g_2)^{[2]} \Big) \bdot W \,,
\]
where $W = (-g_1)\bdot (-g_2) \bdot g_3$ or $W = (-g_2)^{[2]}\bdot (-E)$, and thus $4 \in \mathsf L (U\bdot V)$, a contradiction.
If $|V| = 6$, then
\[
  V = (-g_2)^{[4]}\bdot (-g_1)^{[2]} \quad \mbox{ or } \quad V = (-g_1)^{[4]}\bdot g^{[2]}_3 \,,
\]
where $g_3 \in \{ I, J, K ,-I, -J, -K \} \setminus \{ g_1, -g_1, -g_2 \}$. Hence we obtain that
\[
  U\bdot V = \Big(g_1 \bdot (-g_1)\Big)^{[2]} \bdot \Big( (g_1)^{[2]}\bdot (g_2)^{[2]} \Big) \bdot W \,,
\]
where $W = (-g_2)^{[4]}$ or $W = (-g_1)^{[2]}\bdot g^{[2]}_3$, and thus $4 \in \mathsf L (U\bdot V)$, a contradiction.

\smallskip
\noindent CASE 3{\rm \,:} \, $G=D_8$.

Since $\{2,6\} \notin \mathcal L (D_8)$, Lemma \ref{4.3} implies that it remains to show that $\mathcal L (C^{2}_{2} \oplus C_4) \neq \mathcal L (D_8)$.
We have that $S = a^{[4]} \bdot b\bdot (a^{2}b) \in \mathcal A (D_8)$ and
\[
  S\bdot S^{-1} = \Big(a^{[4]}\Big) \bdot \Big( (a^{3})^{[2]} \bdot b^{[2]} \Big) \bdot \Big( (a^{3})^{[2]} \bdot (a^{2}b)^{[2]} \Big) \,,
\]
whence $\{2, 3, 6 \} \subset \mathsf L (S\bdot S^{-1})$. On the other hand, by \cite[Proposition 4.14]{Sc16a}, there is no $L \in \mathcal L (C^{2}_{2} \oplus C_4)$ such that $\{2, 3, 6 \} \subset  L$. Thus $\mathcal L (C^{2}_{2} \oplus C_4) \neq \mathcal L (D_8)$.
\end{proof}

\providecommand{\bysame}{\leavevmode\hbox to3em{\hrulefill}\thinspace}
\providecommand{\MR}{\relax\ifhmode\unskip\space\fi MR }
\providecommand{\MRhref}[2]{%
  \href{http://www.ams.org/mathscinet-getitem?mr=#1}{#2}
}
\providecommand{\href}[2]{#2}

\end{document}